\theoremstyle{plain}
\newtheorem{thm}{Theorem}
\newtheorem{prop}[thm]{Proposition}
\newtheorem{cor}[thm]{Corollary}
\newtheorem{lemma}[thm]{Lemma}
\theoremstyle{definition}
\newtheorem{definition}[thm]{Definition}
\newtheorem{example}[thm]{Example}
\newtheorem{rem}[thm]{Remark}
\numberwithin{thm}{section}
\numberwithin{equation}{section}
\def\a{\alpha}
\def\g{\gamma}
\def\G{\Gamma}
\def\l{\lambda}
\newcommand{\mE}{\mathcal{E}}
\newcommand{\mP}{\mathcal{P}}
\newcommand{\mT}{\mathcal{T}}
\newcommand{\ff}{\mathfrak{f}}
\newcommand{\fm}{\mathfrak{m}}
\newcommand{\fM}{\mathfrak{M}}
\newcommand{\fp}{\mathfrak{p}}
\newcommand{\fq}{\mathfrak{q}}
\newcommand{\fQ}{\mathfrak{Q}}
\newcommand{\fS}{\mathfrak{S}}
\newcommand{\bfA}{\mathbf{A}}
\newcommand{\bfC}{\mathbf{C}}
\newcommand{\bfF}{\mathbf{F}}
\newcommand{\bfQ}{\mathbf{Q}} 
\newcommand{\bfR}{\mathbf{R}} 
\newcommand{\bfT}{\mathbf{T}}
\newcommand{\bfZ}{\mathbf{Z}}
\newcommand{\Oo}{\mathcal{O}}
\newcommand{\Oe}{\mathcal{O}_E}
\newcommand{\OF}{\mathcal{O}_F}
\newcommand{\OK}{\mathcal{O}_K}
\newcommand{\AF}{\mathbf{A}_F}
\newcommand{\AK}{\mathbf{A}_K}
\newcommand{\AQ}{\mathbf{A}}
\newcommand{\AQf}{\mathbf{A}_{\textup{f}}}
\newcommand{\OFq}{\mathcal{O}_{F,\mathfrak{q}}}
\newcommand{\tuf}{\textup{f}}
\newcommand{\tuint}{\textup{int}}
\newcommand{\tuord}{\textup{ord}}   
\newcommand{\tuss}{\textup{ss}}
\newcommand{\ov}{\overline}
\newcommand{\be}{\begin{equation}}  
\newcommand{\ee}{\end{equation}}
\newcommand{\bes}{\begin{equation*}}
\newcommand{\ees}{\end{equation*}}  
\newcommand{\bs}{\begin{split}}
\newcommand{\es}{\end{split}}
\newcommand{\bss}{\begin{split*}}   
\newcommand{\ess}{\end{split*}}
\newcommand{\bmat}{\left[ \begin{matrix}}
\newcommand{\emat}{\end{matrix} \right]}
\newcommand{\bsmat}{\left[ \begin{smallmatrix}}
\newcommand{\esmat}{\end{smallmatrix} \right]}
\newcommand{\bml}{\begin{multline}}
\newcommand{\eml}{\end{multline}}
\newcommand{\bmls}{\begin{multline*}}
\newcommand{\emls}{\end{multline*}}
\DeclareMathOperator{\ad}{ad}
\DeclareMathOperator{\Cl}{Cl}
\DeclareMathOperator{\End}{End}
\DeclareMathOperator{\Frob}{Frob}
\DeclareMathOperator{\Gal}{Gal}
\DeclareMathOperator{\GL}{GL}
\DeclareMathOperator{\Hom}{Hom}
\DeclareMathOperator{\Res}{Res}
\DeclareMathOperator{\rk}{rk}
\DeclareMathOperator{\val}{val}
\def\AQf{\mathbf{A}_{\textup{f}}}
\newcommand{\hs}{\hspace{2pt}}      
\newcommand{\hf}{\hspace{5pt}}  
\newcommand{\iy}{\infty}
\newcommand{\tr}{\textup{tr}\hspace{2pt}}
\begin{document}
\title[A deformation problem]{A deformation problem for Galois 
representations over imaginary 
quadratic fields}
%\subtitle{Do you have a subtitle?\\ If so, write it here}
%\titlerunning{A deformation problem}
%\authorrunning{Tobias Berger \textup{et al.}}
\author{Tobias Berger$^1$ \and
Krzysztof Klosin$^2$}
\address{$^1$University of Cambridge, Department of 
Pure Mathematics and
Mathematical Statistics, Centre for
Mathematical Sciences, Cambridge, CB3 0WB, United Kingdom.}
\address{$^2$
University
of Utah,
Department of Mathematics, LCB,
155 S 1400 E RM 233, Salt Lake City, UT, 84112-0090,
USA.
%\inst{2}% etc
% \thanks is optional - remove next line if not needed
%\thanks{\emph{Present address:} Insert the address here if needed}%
}                     % Do not remove
%
%\offprints{}          % Insert a name or remove this line
%
%\institute{University of Cambridge, Department of Pure Mathematics and 
%Mathematical Statistics, Centre for
%Mathematical Sciences, Cambridge, CB3 0WB, United Kingdom
%\email{t.berger@dpmms.cam.ac.uk} \and University
%of Utah,
%Department of Mathematics, LCB,
%155 S 1400 E RM 233, Salt Lake City, UT, 84112-0090,
%USA, \email{klosin@math.utah.edu}}

%\mail{Tobias Berger, University of Cambridge, Department of Pure 
%Mathematics and
%Mathematical Statistics, Centre for
%Mathematical Sciences, Cambridge, CB3 0WB, United Kingdom}
%%
%\date{Received: date / Revised version: date}
% The correct dates will be entered by the editor
%
\maketitle
\begin{abstract}
We prove the modularity of minimally ramified ordinary residually 
reducible $p$-adic Galois representations of an imaginary quadratic
field $F$ under certain assumptions. We first exhibit conditions
under which the residual representation is unique up to isomorphism.
Then we prove the existence of deformations arising from cuspforms
on $\GL_2(\AF)$ via the Galois representations constructed by Taylor
\textit{et al.} 
We establish a sufficient condition (in terms of the non-existence of
certain field extensions which in many cases can be reduced to a condition
on an $L$-value) for the universal deformation ring to be a discrete 
valuation ring and in 
that
case we prove an $R=T$ theorem.
%In the case that the universal deformation ring is a
%discrete valuation ring (a condition which we reduce to the
%non-existence of certain field extensions and express in terms of an
%$L$-value in many cases) we prove an $R=T$
%theorem. 
We also study reducible deformations and show that no
minimal characteristic $0$ reducible deformation exists. 
\end{abstract}
\maketitle

\section{Introduction} \label{Introduction}

Starting with the work of Wiles (\cite{Wiles95},
\cite{TaylorWiles95}) there has been a lot of progress in recent
years on modularity results for two-dimensional $p$-adic Galois
representations of totally real fields (see e.g. \cite{BCDT},
\cite{SkinnerWiles97}, \cite{Fujiwara99}, \cite{SkinnerWiles99},
\cite{SkinnerWiles01}, \cite{Taylor02}, \cite{Kisin07}). The goal of
this paper is to prove such a result for imaginary quadratic fields,
a case that requires new techniques since the associated symmetric
space has no complex structure.

Let $F\neq \bfQ(\sqrt{-1}), \bfQ(\sqrt{-3})$ be an imaginary
quadratic field of discriminant $d_F$. Under certain assumptions we prove 
an ``$R=T$"
theorem for residually reducible two-dimensional representations of
the absolute Galois group of $F$. We pin down conditions (similar to
\cite{SkinnerWiles97}, where an analogous problem is treated for
representations of $\Gal(\ov{\bfQ}/\bfQ)$) that determine our
residual representation up to isomorphism and then study its minimal
ordinary deformations. Modular deformations are constructed using
the congruences involving Eisenstein cohomology classes of
\cite{B09} and the result of Taylor on associating
Galois representations to certain cuspidal automorphic
representations over imaginary quadratic fields (using the
improvements of \cite{BergerHarcos07}). The approach of
\cite{SkinnerWiles97} to prove the isomorphism between universal
deformation ring and Hecke algebra fails in our case because of the
non-existence of an ordinary reducible characteristic 0 deformation.
This failure, however, allows under an additional assumption to show
(using the method of \cite{BellaicheChenevier06}) that the
Eisenstein deformation ring is a discrete valuation ring. As in
\cite{Calegari06} it is then easy to deduce an ``$R=T$" theorem.

To give a more precise account, let $c$ be the non-trivial
automorphism of $F$, and let $p>3$ be a prime split in the extension
$F/\bfQ$. Fix embeddings $F \hookrightarrow \overline{\bfQ}
\hookrightarrow \overline{\bfQ}_p\hookrightarrow \bfC$. Let
$F_{\Sigma}$ be the maximal extension of $F$ unramified outside a
finite set of places $\Sigma$. Suppose $\bfF$ is a finite field of
characteristic $p$ and that $\chi_0:{\rm Gal}(F_{\Sigma}/F) \to
\bfF^{\times}$ is an anticyclotomic character ramified at the places
dividing $p$. Suppose also that $\rho_0:{\rm Gal}(F_{\Sigma}/F) \to
{\rm GL}_2(\bfF)$ is a continuous representation of the form
$$\rho_0=\begin{pmatrix} 1&*\\0&\chi_0
\end{pmatrix}$$ and having scalar centralizer. Under certain
conditions on $\chi_0$ and $\Sigma$ we show that $\rho_0$ is unique
up to isomorphism  (see Section \ref{Uniqueness of a certain
residual Galois representation}) and we fix a particular choice.
This setup is similar to that of \cite{SkinnerWiles97}. Note that,
as explained in Remark~\ref{specialbasis}, under our conditions $\rho_0$
does not arise
as the restriction of a representation of ${\rm
Gal}(\ov{\bfQ}/\bfQ)$.

Following Mazur \cite{Mazur97} we study ordinary deformations of
$\rho_0$. Let $\mathcal{O}$ be a local complete Noetherian ring with
residue field $\bfF$. An $\mathcal{O}$-deformation of $\rho_0$ is a
local complete Noetherian $\mathcal{O}$-algebra $A$ with residue
field $\bfF$ and maximal ideal $\mathfrak{m}_A$ together with an
equivalence class of continuous representations $\rho:{\rm
Gal}(F_{\Sigma}/F) \to {\rm GL}_2(A)$ satisfying $\rho_0=\rho
\mod{\mathfrak{m}_A}$. An \textit{ordinary} deformation (see, for
example, the definition in \cite{Weston05}) is a deformation that
satisfies
$$\rho|_{D_{\fq}}\cong
\begin{pmatrix} \chi_1^{({\fq})}&*\\0&\chi_2^{({\fq})}
\end{pmatrix}$$ for ${\fq} \mid p$, where
$\chi_i^{({\fq})}|_{I_{\fq}}=\tau_i^{({\fq})}
\epsilon^{k_i^{({\fq})}}$ with $k_1^{({\fq})} \geq k_2^{({\fq})}$,
$\epsilon$ is the $p$-adic cyclotomic character, and
$\tau_i^{({\fq})}$ are some finite order characters. Here $D_{\fq}$
and $I_{\fq}$ denote the decomposition group and the inertia group
of ${\fq} \mid p$, corresponding to $F \hookrightarrow
\overline{\bfQ}_p$ or the conjugate embedding, respectively.

%\com{define $S$-minimal deformation?}

To exhibit modular deformations we apply the cohomological
congruences of \cite{B09} and the Galois
representations constructed by Taylor \textit{et al.} using a
strengthening of Taylor's result in \cite{BergerHarcos07}. We also
make use of a result of Urban \cite{Urban05} who proves that
$\rho_{\pi}|_{D_{\fq}}$ is ordinary at ${\fq} \mid p$ if $\pi$ is
ordinary at ${\fq}$. We show that these results imply that there is
an $\mathcal{O}$-algebra surjection \begin{equation} \label{RsurjT}
R \twoheadrightarrow T,\end{equation} where $R$ is the universal
$\Sigma$-minimal deformation ring (cf. Definition \ref{sigmamin})
and $T$ is a Hecke algebra acting on cuspidal automorphic forms of
${\rm GL}_2(\bfA_F)$ of weight 2 and fixed level.

As in \cite{Calegari06} we can deduce that the surjection
(\ref{RsurjT}) is, in fact, an isomorphism if $R$ is a discrete
valuation ring (see Theorem \ref{heckedef}).  Using the method of
\cite{BellaicheChenevier06} we prove in Proposition~\ref{prop5.9}
that the latter reduces to the non-existence of reducible
$\Sigma$-minimal deformations to ${\rm GL}_2(\Oo/\varpi^2 \Oo)$
\footnote{Here $\varpi$ denotes a uniformizer of $\Oo$.}. 
We then show (Theorem~\ref{prop5.10}) that this last property can 
often be
deduced from a condition on the $L$-value at 1 of a Hecke character
of infinity type $z/\ov{z}$ which is related to $\chi_0$.
%We then
%show (Proposition~\ref{prop5.10}) that this last property can often
%be deduced from an $L$-value condition on a Hecke character
%whose $\fp$-adic avatar reduces to $\chi_0 \epsilon^{-1}$ mod $\varpi$. 
Finally
we combine these results in Theorem~\ref{cor5.8} to prove the
modularity of certain residually reducible $\Sigma$-minimal ${\rm
Gal}(\ov{\bfQ}/F)$-representations. For an explicit numerical
example where we can verify the conditions of Theorem~\ref{cor5.8}
see Example \ref{example1}.

To demonstrate our modularity result we give here the following
special case:

\begin{thm} Assume $\# \Cl_F=1$, that $p$ does not 
divide the class 
number of the ray class field of
  $F$ of conductor $p$, and that any prime $q \mid d_F$
satisfies $q \not\equiv \pm 1$ \textup{(mod} $p$\textup{)}. Let $\fp$ be 
the prime of $F$ over $(p)$ corresponding to the embedding $F 
\hookrightarrow \ov{\bfQ} \hookrightarrow \ov{\bfQ}_p$ that we have 
fixed. Let $\tau$ be the unramified Hecke character of infinity type
$\tau_{\infty}(z)=z/\ov{z}$ 
and let $\tau_{\fp}:{\rm Gal}(\overline \bfQ/F) \to
\bfZ_p^{\times}$ be the associated $p$-adic Galois character. Assume that
$\val_p(L^{\tuint}(1, \tau))=1$.\footnote{For definitions see section 
\ref{Notation and terminology}.}

Let $\rho: {\rm
Gal}(F_{\Sigma}/F) \rightarrow \GL_2(\ov{\bfQ}_p)$ be a continuous
irreducible representation that is ordinary at all places $\fq \mid
p$. Suppose $\ov{\rho}^{\tuss} \cong 1 \oplus \ov{\tau}_{\fp}$. If the
following conditions are satisfied:

\begin{enumerate}
  \item $\Sigma \supset \{{\fq} \mid p d_F\}$,
  \item if $\fq \in \Sigma, \fq \nmid p$, then $\overline\tau_{\fp} 
(\Frob_{\fq}) \neq \pm \# \OF/\fq$ as elements of $\bfF_p$,
  \item ${\rm det}(\rho)=\tau_{\fp}$,
  \item $\rho$ is $\Sigma$-minimal,
\end{enumerate}
then $\rho$ is isomorphic to the Galois representation associated to
a cuspform of ${\rm GL}_2(\bfA_F)$ of weight 2, twisted by the
$p$-adic Galois character associated to a Hecke character of
infinity type $z$.
\end{thm}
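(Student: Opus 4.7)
The plan is to recognize this theorem as a specialization of the main $R = T$ result of the paper (Theorem~\ref{cor5.8}), and to reduce the proof to verifying that the hypotheses of that theorem are met under the given arithmetic assumptions. I would set $\chi_0 := \ov{\tau}_{\fp}$, the mod-$p$ reduction of the $p$-adic Galois character attached to $\tau$. Since $\tau$ has infinity type $z/\ov{z}$, the product $\tau \cdot (\tau \circ c)$ has trivial infinity type and trivial ideal component (the hypothesis $\#\Cl_F=1$ makes this check straightforward), so $\chi_0$ is anticyclotomic; the non-triviality of the infinity type on both factors ensures that $\chi_0$ is ramified at both primes of $F$ above $p$, as required in the setup of the paper.

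Next I would verify the uniqueness of the residual representation $\rho_0$ with semisimplification $1 \oplus \chi_0$ and scalar centralizer. The assumption that $p$ does not divide the class number of the ray class field of conductor $p$ is precisely the hypothesis invoked in Section~\ref{Uniqueness of a certain residual Galois representation} to force a unique non-split extension class. The condition $q \not\equiv \pm 1 \pmod{p}$ for $q \mid d_F$ rules out coincidences among residual Frobenius eigenvalues at ramified primes, so that the $\Sigma$-minimal ramification behavior at primes $\fq \in \Sigma$ with $\fq \nmid p$ is rigid in the sense of Definition~\ref{sigmamin}; this is also what produces hypothesis (2) of the theorem from the intrinsic side of Theorem~\ref{cor5.8}.

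The crux of the argument is the discrete valuation ring property of the universal $\Sigma$-minimal deformation ring $R$. By Proposition~\ref{prop5.9}, it suffices to exclude reducible $\Sigma$-minimal deformations of $\rho_0$ to $\GL_2(\Oo/\varpi^2\Oo)$; by Theorem~\ref{prop5.10}, such non-existence follows from the hypothesis $\val_p(L^{\tuint}(1,\tau))=1$, since $\tau$ is exactly the Hecke character of infinity type $z/\ov{z}$ whose $L$-value controls the relevant Selmer group. With $R$ a DVR, Theorem~\ref{heckedef} (which combines the surjection $R \twoheadrightarrow T$ coming from the Eisenstein cohomology congruences of \cite{B09}, Taylor's Galois representations (with the improvements of \cite{BergerHarcos07}), and Urban's ordinarity result \cite{Urban05}, with the Calegari-type argument) gives $R \cong T$.

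Finally, the given $\rho$ is, after extending scalars to an $\Oo$ with residue field containing the values of $\ov{\rho}^{\tuss}$, an $\Oo$-deformation of $\rho_0$ which is ordinary, $\Sigma$-minimal, and has determinant $\tau_{\fp}$; it therefore factors through the universal object and hence through $T$, so corresponds to a cuspform on $\GL_2(\bfA_F)$. The determinant condition $\det\rho=\tau_\fp$ (rather than $\tau_\fp\epsilon$) forces the identification of $\rho$ with the twist by the $p$-adic character of the Hecke character of infinity type $z$, as in the formulation of Theorem~\ref{cor5.8}. The main obstacle I expect is not in the formal reduction, which is straightforward once Theorem~\ref{cor5.8} is in hand, but in confirming that the numerical hypotheses stated here (class number of the ray class field, the $L$-value valuation, the $q \not\equiv \pm 1 \pmod p$ condition) really do imply the somewhat more technical hypotheses of Theorem~\ref{cor5.8} and Theorem~\ref{prop5.10} — in particular, matching the normalization of $L^{\tuint}(1,\tau)$ to the Selmer-group computation that produces the DVR conclusion.
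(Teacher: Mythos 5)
Your proposal is essentially the reduction the paper has in mind: verify that the concrete arithmetic hypotheses force $\Sigma$-admissibility of $\chi_0=\overline{\tau}_{\fp}$ and its inverse (hence uniqueness of $\rho_0$ via Theorem~\ref{essuni}/Corollary~\ref{essuni2}), use the $L$-value hypothesis to kill reducible deformations to $\GL_2(\Oo/\varpi^2\Oo)$ via Theorem~\ref{prop5.10} and hence get a DVR via Proposition~\ref{prop5.9}, and conclude $R\cong T$ from Theorem~\ref{heckedef}. The normalization worry you flag resolves cleanly: with $\Psi=\phi_{\fp}\epsilon$ and $\phi=\tau\cdot\mathrm{Nm}$ one has $\tau_{\fp}=\Psi$ and $L(1,\tau)=L(0,\phi)$, and the requirement $p\nmid\#\Delta$ in Theorem~\ref{prop5.10} is automatic here since $\Gal(F(\Psi)/F)$ embeds in $\bfZ_p^{\times}$ so $\Delta$ has order dividing $p-1$.

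One inaccuracy worth correcting: you cite Theorem~\ref{cor5.8} as the target of the specialization, but that statement is formulated for the Theorem~\ref{Eiscong+} setup, where the auxiliary characters $\phi_1,\phi_2$ are required to have \emph{split} conductors and one also needs the torsion-freeness of $H^2_c$. The Introduction theorem instead operates in the Theorem~\ref{Eiscong} setup: since $\tau$ (equivalently $\phi$) is unramified, Theorem~\ref{Eiscong} supplies auxiliary characters $\phi_1,\phi_2$ with conductors supported at ramified or inert primes $q\not\equiv\pm1\pmod{p}$, with no extra cohomological hypothesis. That is why the Introduction's condition~(1) is $\Sigma\supset\{\fq\mid pd_F\}$ with no $\fM_1\fM_1^c$ term, and why the hypothesis $q\not\equiv\pm1\pmod p$ for $q\mid d_F$ appears. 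The downstream argument (Theorem~\ref{minimal2}, Proposition~\ref{nored}, Proposition~\ref{prop5.9}, Theorem~\ref{prop5.10}, Theorem~\ref{heckedef}) is developed for either setup, so your chain of reasoning goes through unchanged; it is just the framing that should say ``specialize the Section~5 framework in its Theorem~\ref{Eiscong} incarnation'' rather than ``apply Theorem~\ref{cor5.8}.'' You also slightly conflate the role of $q\not\equiv\pm1\pmod p$ (which controls the auxiliary-character conductors and the admissibility at ramified primes) with the Introduction's hypothesis~(2) (which directly imposes condition~(2) of Definition~\ref{adm} at all $\fq\in\Sigma$, $\fq\nmid p$, and is needed because $\Sigma$ may contain primes beyond $S_{\phi}$); these are related but distinct inputs.
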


We also study the existence of reducible deformations (see Section
\ref{A reducible deformation of rho_0}). In contrast to the
situation in \cite{SkinnerWiles97} there exists no reducible
$\Sigma$-minimal $\mathcal{O}$-deformation in our case, only a
nearly ordinary (in the sense of Tilouine \cite{Tilouine96})
reducible deformation which is, however, not de Rham at one of the
places above $p$. This means that the method of
\cite{SkinnerWiles97} to prove $R=T$ via the numerical criterion of
Wiles and Lenstra \cite{Lenstra95}, \cite{Wiles95} cannot be implemented 
despite 
having all the
ingredients on the Hecke side (i.e., a lower bound on the congruence
module measuring congruences between cuspforms and Eisenstein
series).

 The assumption on $\chi_0$ being anticyclotomic could be relaxed but is useful
 both for proving
the uniqueness of $\rho_0$ and to construct the modular
deformations, and is related to a condition on the central character
in Taylor's result on associating Galois representation to
cuspforms. The restrictions in Definition~\ref{adm} on the places
contained in $\Sigma$ and on the class group of the splitting field
of $\chi_0$ are similar to those of \cite{SkinnerWiles97} and are
essential for the uniqueness of $\rho_0$. Our methods do not allow
to go beyond the $\Sigma$-minimal case (to achieve
that in the $\bfQ$-case \cite{SkinnerWiles97} use
Proposition 1 of \cite{TaylorWiles95}, but its
analogue fails for imaginary quadratic
fields) or treat residually irreducible Galois representations.
To
complement our study of the \textit{absolute} deformation problem of
a residually reducible Galois representation the reader is referred
to the analysis of the nearly ordinary \textit{relative} deformation
problem in \cite{CalegariMazur07}.

\section{Notation and terminology}\label{Notation and terminology}

\subsection{Galois groups} \label{Galois groups} Let $F$ be an imaginary
quadratic
extension of $\bfQ$ of discriminant $d_F \neq 3,4$
and $p>3$ a rational
prime which splits in $F$. Fix a prime $\fp$ of $F$ lying over $(p)$ and
denote the other prime of $F$ over $(p)$ by $\ov{\fp}$.
Let $\Cl_F$ denote the class group of
$F$.
We assume that $p \nmid \# \Cl_F$ and that any prime $q \mid d_F$
satisfies $q \not\equiv \pm 1 \pmod{p}.$

For a field $K$ write $G_K$ for the Galois group $\Gal(\ov{K}/K)$. If
$K\supset F$ is a number field, $\OK$ will denote its ring of integers. If
$K$ is a finite extension of $\bfQ_{\ell}$ for some rational prime $\ell$,
we write $\OK$ (respectively $\varpi_K$, and $\bfF_K$) for the ring of
integers of $K$ (respectively for a uniformizer of $K$, and $\OK/\varpi_K
\OK$). If $\fq$ is a place of $K$, we write $K_{\fq}$ for the completion
of $K$ with respect to the absolute value $|\cdot|_{\fq}$ determined by
$\fq$ and set $\Oo_{K,\fq} = \Oo_{K_{\fq}}$ (if $\fq$ is archimedean, we
set $\Oo_{K,\fq} = K_{\fq}$). We also write $\varpi_{\fq}$ for a
uniformizer of $K_{\fq}$.

Fix once and for all compatible embeddings $i_{\fq}:\ov{F}
\hookrightarrow \ov{F}_{\fq}$ and $\ov{F}_{\fq} \hookrightarrow
\bfC$, for every prime $\fq$ of $F$, so we will often regard
elements of $\ov{F}_{\fq}$ as complex numbers without explicitly
mentioning it. If $\fq$ is a place of $K \subset \overline F$,
we always regard $K_{\fq}$ as a subfield of $\ov{F}_{\fq \cap \OF}$
as determined by the embedding $i_{\fq \cap \OF}$. This also allows
us to identify $G_{K_{\fq}}$ with the decomposition group
$D_{\fQ}\subset G_K$ of a prime $\fQ$ of
the ring of integers $\Oo_{\ov{F}}$
of $\ov{F}$. We will denote that
decomposition group by $D_{\fq}$. Abusing notation somewhat we will
denote the image of $D_{\fq}$ in any quotient of $G_K$ also by
$D_{\fq}$. We write $I_{\fq} \subset D_{\fq}$ for the inertia group.

Let $\Sigma$ be a finite set of places of $K$. Then $K_{\Sigma}$ will
denote the maximal Galois extension of $K$ unramified outside the primes
in $\Sigma$. We also write $G_{\Sigma}$ for $G_{F_{\Sigma}}$.

For a positive integer $n$, denote by $\mu_n$ the group of $n$-th
roots of unity. If $K$ is a number field we set $K'=K(\mu_p)$.
%\com{p was already defined}
Let $\omega_{K,p}$ denote the character
giving the action of $\Gal(K'/K)$ on $\mu_p$. Let $\Cl_{K,p}$ denote
the Sylow-$p$-subgroup of the quotient of the class group $\Cl_{K'}$
of $K'$ corresponding (by Class Field Theory) to the quotient
$I_{K'}/P_{K'} \mP$, where $I_{K'}$ is the group of fractional
ideals of $K'$, $P_{K'}$ the subgroup of principal ideals and $\mP$
the subgroup of $I_{K'}$ generated by the primes of $K'$ lying over
$p$. We will write $\Cl_{K,p}^{\omega}$ for the $\omega_{K,p}$-part
of $\Cl_{K,p}$.

\subsection{Hecke characters} For a number field $K$, denote by $\AK$
the ring of adeles of $K$ and set $\AQ = \bfA_{\bfQ}$. By a
\textit{Hecke character} of $K$ we mean a continuous homomorphism
$$\lambda: K^{\times} \setminus \AK^{\times} \rightarrow
\bfC^{\times}.$$ For a place $\fq$ of $K$ write $\l^{(\fq)}$ for the
restriction of $\l$ to $K_{\fq}$ and $\l^{(\iy)}$ for the
restriction of $\l$ to $\prod_{\fq \mid \iy} K_{\fq}$. The latter
will be called the \textit{infinity type} of $\lambda$. We also
usually write $\l(\varpi_{\fq})$ to mean $\l^{(\fq)}(\varpi_{\fq})$.
Given $\lambda$ there exists a unique ideal $\ff_{\lambda}$ of $K$
with the property that $\lambda^{(\fq)}(x)=1$ for every finite place
$\fq$ of $K$ and $x \in \Oo_{K,\fq}^{\times}$ such that $x-1
\in \ff_{\lambda} \Oo_{K,\fq}$. The ideal $\ff_{\lambda}$ is called
the \textit{conductor} of $\lambda$. If $K=F$, there is only one
archimedean place, which we will simply denote by $\iy$. For a Hecke
character $\lambda$ of $F$, one has $\lambda^{(\iy)} (z) = z^m
\ov{z}^n$ with $m,n \in \bfR$. If $m,n \in \bfZ$, we say that
$\lambda$ is of type ($A_0$). We always assume that our Hecke
characters are of type ($A_0$). Write $L(s,\lambda)$ for the Hecke
$L$-function of $\lambda$. Let $\lambda$ be a Hecke character of
infinity type $z^a \left( \frac{z}{\overline z} \right )^{b}$ with
conductor prime to $p$. Assume $a,b \in \bfZ$ and $a>0$ and ${b}
\geq 0$. Put
$$L^{\mathrm{alg}}(0,\lambda):= \Omega^{-a-2{b}} \left(
\frac{2\pi}{\sqrt{d_F}}\right)^{b} \Gamma(a+{b})\cdot
L(0,\lambda),$$ where $\Omega$ is a complex period. In most cases,
this normalization is integral, i.e., lies in the integer ring of a
finite extension of $F_{\mathfrak{p}}$. See \cite{Berger08} Theorem 3
for the exact statement. Put $$L^{\rm int}(0, \lambda)=
  \begin{cases}
    L^{\mathrm{alg}}(0,\lambda) & \text{ if } {\rm val}_p (L^{\mathrm{alg}}(0,\lambda)) \geq 0 \\
    1 & \text{otherwise}.
  \end{cases}
$$

For $z \in \bfC$ we write $\ov{z}$ for the complex conjugate of $z$. The
action of complex conjugation extends to an automorphism of $\AF^{\times}$
and we will write $\ov{x}$ for the image of $x \in \AF^{\times}$ under
that automorphism.

For a Hecke character $\lambda$ of $F$, we denote by $\lambda^c$ the Hecke
character of $F$ defined by $\lambda^c(x) = \lambda(\ov{x})$.

\subsection{Galois representations} \label{Galois representations} For a
field $K$ and a topological
field
$E$, by a
\textit{Galois representation} we mean a continuous homomorphism $\rho:
G_K
\rightarrow \GL_n(E)$. If $n=1$ we usually refer to $\rho$ as a
\textit{Galois
character}. We write $K(\rho)$ for the fixed field of $\ker \rho$ and
call it the \textit{splitting field of $\rho$}. If $K$ is a number
field and $\fq$ is a finite prime of $K$ with inertia group $I_{\fq}$ we
say
that $\rho$ is unramified at $\fq$ if $\rho|_{I_{\fq}} = 1$.

Let $E$
be a finite extension of $\bfQ_p$. Every Galois
representation $\rho : G_K \rightarrow \GL_n(E)$ can be conjugated (by an
element $M \in \GL_n(E)$) to a
representation $\rho_M: G_K \rightarrow \GL_n(\Oe)$. We denote by
$\ov{\rho}_M: G_F \rightarrow \GL_n(\bfF_E)$ its reduction modulo
$\varpi_E\Oe$. It is sometimes called a \textit{residual representation}
of $\rho$. The isomorphism class of its semisimplification
$\ov{\rho}_M^{\tuss}$ is
independent of the choice of $M$ and we simply write $\ov{\rho}^{\tuss}$.

%On Teichmuller lifting

%Let $q= \# \bfF_E$. The canonical maps $\iota_1: \mu_{q-1}
%\hookrightarrow \Oe^{\times}$ and $\iota_2: \Oe^{\times}
%\twoheadrightarrow \bfF_E^{\times}$ give rise to a canonical
%injection $\iota: \bfF_E^{\times} \hookrightarrow \Oe^{\times}$
%coming from the composite $$\xymatrix{\bfF_E^{\times}
%\ar[r]_{\sim}^{\iota_2^{-1}} &\frac{\Oe^{\times}}{1 + \varpi_E \Oe}
%\ar[r]^{\sim} &\mu_{q-1}} \overset{{\iota_1}}{\hookrightarrow}
%\Oe^{\times}.$$
%\com{moved $\iota_1$}
%For a Galois character $\rho:
%G_K \rightarrow \bfF_E^{\times}$ we set $\tilde{\rho}:= \iota \circ
%\rho: G_K \rightarrow \Oe^{\times}$ and call it the
%\textit{Teichm\"{u}ller lift of $\rho$}.

Let $\epsilon: G_F
\rightarrow \bfZ_p^{\times}$ denote the $p$-adic cyclotomic character.
For any subgroup $G \subset
G_F$ we will also write $\epsilon$ for $\epsilon|_{G}$. Our
convention is that the Hodge-Tate weight of $\epsilon$ at $\fp$ is 1.

Let $\lambda$ be a Hecke character of $F$ of type ($A_0$). We define
(following Weil) a $\fp$-adic Galois character $$\lambda_{\fp}: G_F
\rightarrow \ov{F}_{\fp}^{\times}$$
%\com{clash of notation- call
%this $\lambda_p$? or just define $\lambda_{\infty}$ above?}
associated to $\l$ by the following rule: For a finite place $\fq
\nmid p \ff_{\lambda}$ of $F$, put $\l_{\fp}(\Frob_{\fq}) = i_{\fp}
(i_{\iy}^{-1}(\l(\varpi_{\fq})))$ where $\Frob_{\fq}$ denotes the
\textit{arithmetic} Frobenius at $\fq$. It takes values in the
integer ring of a finite extension of $F_{\fp}$.

\subsection{Automorphic representations of $\AF$ and their Galois
representations} Set $G= \Res_{F/\bfQ} \GL_2$. For
$K_{\tuf}=\prod_{\fq\nmid \iy}
K_{\fq}$ an open
compact subgroup of $G(\AQf)$, denote by $S_2(K_{\tuf})$ the space of
cuspidal
automorphic forms of $G(\AQ)$ of weight 2, right-invariant under
$K_{\tuf}$ (for more details see Section 3.1 of \cite{Urban95}).
For $\psi$ a finite order Hecke character write $S_2(K_{\tuf}, \psi)$ for
the forms with central character $\psi$. This is isomorphic as a
$G(\AQf)$-module to $\bigoplus \pi_{\tuf}^{K_{\tuf}}$ for automorphic
representations $\pi$ of certain infinity type (see Theorem \ref{attach1}
below)
with central character $\psi$. Here $\pi_{\tuf}$ denotes the restriction
of $\pi$ to $\GL_2(\AQf)$ and $\pi_{\tuf}^{K_{\tuf}}$ stands for the
$K_{\tuf}$-invariants.

For $g \in G(\AQf)$ we have the usual Hecke
action of $[K_{\tuf} g K_{\tuf}]$ on $S_2(K_{\tuf})$ and $S_2(K_{\tuf},
\psi)$. For primes $\fq$ with $K_{\fq} = \GL_2(\OFq)$ we define $T_{\fq}
= [K_{\tuf} \bmat \varpi_{\fq} \\ & 1 \emat K_{\tuf}] $.

Combining the work of Taylor, Harris, and Soudry with results of
Friedberg-Hoffstein and Laumon/Weissauer, one can show the following
(see \cite{BergerHarcos07} for general case of cuspforms of weight
$k$):
\begin{thm}[\cite{BergerHarcos07} Theorem 1.1] \label{attach1} Given a 
cuspidal automorphic representation
$\pi$ of
$\GL_2(\AF)$ with $\pi_{\iy}$ isomorphic to the principal series
representation corresponding to $$\bmat t_1 & * \\ & t_2 \emat \mapsto
\left(\frac{t_1}{|t_1|} \right) \left(\frac{|t_2|}{t_2}\right)$$ and
cyclotomic central character
$\psi$ (i.e., $\psi^c = \psi$), let $\Sigma_{\pi}$ denote the set
consisting of the places of $F$ lying above $p$, the primes where $\pi$ or
$\pi^c$ is ramified, and the primes ramified in $F/\bfQ$.

Then there exists a finite extension $E$ of $F_{\fp}$ and a Galois
representation $$\rho_{\pi}: G_F \rightarrow
\GL_2(E)$$ such that if $\fq \not\in \Sigma_{\pi}$, then
$\rho_{\pi}$ is unramified at $\fq$ and the characteristic polynomial of
$\rho_{\pi}(\Frob_{\fq})$ is $x^2-a_{\fq}(\pi)x + \psi(\varpi_{\fq}) (\#
\OF/\fq),$ where $a_{\fq}(\pi)$ is the Hecke eigenvalue corresponding to
$T_{\fq}$.
Moreover,
$\rho_{\pi}$ is absolutely irreducible. \end{thm}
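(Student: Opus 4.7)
The plan is to follow the strategy of Taylor and its refinements: produce $\rho_\pi$ by transferring $\pi$ to an automorphic representation of a larger group for which Galois representations are already known to exist, and then recover $\rho_\pi$ from the restriction to $G_F$. Concretely, I would use theta lifting from $\GL_2(\AF)$ to $\GSp_4(\AQ)$. The cyclotomicity hypothesis $\psi^c=\psi$ is exactly what allows $\pi$ to descend/lift in this manner: combined with a suitable auxiliary Hecke character $\chi$ of $F$, one forms the twist $\pi\otimes\chi$ and considers its theta correspondence (equivalently, its automorphic induction to $\GL_4/\bfQ$, which lands in the image of the stable base change from $\GSp_4$ when the central character condition is satisfied).

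The first key step is to choose $\chi$ so that the lift is both nonzero and cuspidal. Here I would invoke Friedberg--Hoffstein to exhibit a Hecke character $\chi$ whose twisted $L$-value $L(1/2,\pi\otimes\chi)$ is nonzero, which via Waldspurger-type results (and the Harris--Soudry--Taylor machinery) guarantees nonvanishing of the theta lift $\Pi$ on $\GSp_4(\AQ)$. The archimedean component of $\pi$ specified in the hypothesis corresponds precisely to the holomorphic discrete series of $\GSp_4(\bfR)$ of parallel weight $2$ (Yoshida-type), so $\Pi_\infty$ is holomorphic and $\Pi$ contributes to the space of Siegel modular forms. One then applies the construction of Laumon and Weissauer to attach a four-dimensional $\fp$-adic Galois representation $\rho_\Pi\colon G_\bfQ\to\GL_4(E)$ satisfying local-global compatibility at all primes outside $\Sigma_\pi\cup\{\ell\text{-adic places}\}$.

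The second key step is to pass from $\rho_\Pi$ down to the desired $\rho_\pi$. By construction, $\rho_\Pi|_{G_F}$ decomposes (up to semisimplification) as $\rho_\pi\otimes\chi_\fp \;\oplus\; \rho_\pi^{c}\otimes\chi^{c}_\fp$, where the two summands are interchanged by the action of $\Gal(F/\bfQ)$ induced by the base change. Extracting the first summand and untwisting by $\chi_\fp$ defines $\rho_\pi$. Unramifiedness outside $\Sigma_\pi$ and the identity of the Frobenius characteristic polynomial $x^2-a_\fq(\pi)x+\psi(\varpi_\fq)(\#\Oo_F/\fq)$ at $\fq\notin\Sigma_\pi$ then follow from the Satake parameters of the theta lift and the Eichler--Shimura relation for $\rho_\Pi$ at good primes.

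The main obstacle will be the absolute irreducibility of $\rho_\pi$. I would argue by contradiction: if $\rho_\pi^{\tuss}=\eta_1\oplus\eta_2$ for Galois characters $\eta_i$ of $G_F$, then matching Frobenius traces at unramified $\fq$ forces $a_\fq(\pi)=\eta_1(\Frob_\fq)+\eta_2(\Frob_\fq)$ for almost all $\fq$, so by strong multiplicity one $\pi$ would be an Eisenstein (residual) representation in the discrete spectrum, contradicting its cuspidality. Subtleties may arise in the CM case, where one must also rule out that $\pi$ is a theta series, but the archimedean type in the hypothesis (a principal series rather than a limit of discrete series) together with the cyclotomic central character excludes the problematic CM possibilities. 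The delicate points concentrate in: (i) certifying nonvanishing and cuspidality of the $\GSp_4$ lift (the Friedberg--Hoffstein input), and (ii) verifying the decomposition pattern of $\rho_\Pi|_{G_F}$ cleanly enough to define $\rho_\pi$ with the stated local properties.
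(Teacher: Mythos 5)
Your outline matches the strategy behind \cite{BergerHarcos07} Theorem 1.1 (and the earlier Harris--Soudry--Taylor work it refines): twist $\pi$ by an auxiliary Hecke character found via Friedberg--Hoffstein so that the theta lift to $\GSp_4(\AQ)$ is nonzero and cuspidal, invoke the Galois representations attached to Siegel cusp forms by Taylor/Laumon/Weissauer, restrict to $G_F$, and split off and untwist one summand; irreducibility then comes from cuspidality via Chebotarev and strong multiplicity one. This is, in substance, exactly what the paper points to in the sentence preceding the theorem, so the approach is the same.

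One technical point is worth correcting. The archimedean component of $\pi$ in the hypothesis lifts not to a holomorphic \emph{discrete series} of $\GSp_4(\bfR)$ but to a scalar weight-$2$ holomorphic representation, which is a \emph{non-cohomological} (limit-of-discrete-series type) member of the archimedean $L$-packet; discrete series of $\GSp_4(\bfR)$ with scalar minimal $K$-type require weight $\ge 3$. This is precisely the source of the difficulty: because the lift is low-weight, Taylor's original construction only gave Frobenius compatibility at a density-one set of primes, and the gain in Berger--Harcos is that the Laumon/Weissauer results let one handle this low-weight regime and conclude compatibility at \emph{all} $\fq\notin\Sigma_\pi$. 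Describing $\Pi_\infty$ as a holomorphic discrete series elides the central obstruction the cited theorem overcomes; the rest of your argument (Friedberg--Hoffstein input, the decomposition $\rho_\Pi|_{G_F}^{\tuss}\cong(\rho_\pi\otimes\chi_\fp)\oplus(\rho_\pi^c\otimes\chi_\fp^c)$, and the Eisenstein-contradiction argument for absolute irreducibility) is consonant with the actual proof.
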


\begin{rem} Taylor has some additional technical assumption in
\cite{Taylor94} and only showed the equality of Hecke and Frobenius
polynomial outside a set of places of zero density. Conjecture 3.2
in \cite{CalegariDunfield06} describes a conjectural extension of
Taylor's theorem.
\end{rem}

Urban studied in \cite{Urban98} the case of ordinary automorphic
representations $\pi$, and together with results in \cite{Urban05}
on the Galois representations attached to ordinary Siegel modular
forms showed:

\begin{thm} [Corollary 2 of \cite{Urban05}] \label{Urbanordinary}
Let
$\fq$ be a prime of $F$ lying over $p$. If
$\pi$
is unramified at $\fq$ and ordinary at $\fq$, i.e., $|a_{\fq}(\pi)|_{\fq}
= 1$, then the Galois representation $\rho_{\pi}$ is ordinary at $\fq$,
i.e.,
$$\rho_{\pi}|_{D_{\fq}} \cong \bmat \Psi_1 & *\\ &
\Psi_2
\emat,$$ where $\Psi_2|_{I_{\fq}} = 1$ and $\Psi_1|_{I_{\fq}} = \det
\rho_{\pi}|_{I_{\fq}} = \epsilon.$ \end{thm}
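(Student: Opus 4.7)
The plan is to realize $\rho_\pi$ inside the Galois representation attached to a holomorphic Siegel modular form on $\GSp_4/\bfQ$ and then to invoke Urban's theorem on ordinary Siegel modular forms. This is essentially the route of \cite{Urban98} and \cite{Urban05} and matches the way in which the representation $\rho_\pi$ itself is manufactured in Theorem \ref{attach1}.

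First, following Taylor's strategy, one transfers $\pi$ to an automorphic representation $\Pi$ of $\GSp_4(\bfA_\bfQ)$ whose archimedean component is a holomorphic (limit of) discrete series representation, and whose associated Galois representation $\rho_\Pi \colon G_\bfQ \to \GL_4(E)$ has the property that $\rho_\Pi|_{G_F}$ is, up to a determined twist, the direct sum $\rho_\pi \oplus \rho_{\pi^c}$. The hypothesis that the central character of $\pi$ is cyclotomic is exactly what makes this lift exist and preserves the appropriate weight and central character data.

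Next I would verify that the ordinarity of $\pi_\fq$ at the prime $\fq \mid p$ is preserved under the transfer: since $p$ splits in $F$ as $\fp\overline{\fp}$, the local component $\Pi_p$ is built from the pair $(\pi_\fp,\pi_{\overline{\fp}})$, and the condition $|a_\fq(\pi)|_\fq = 1$ together with unramifiedness at $\fq$ translates into an ordinarity condition on the Satake parameters of $\Pi_p$ in the $\GSp_4$ sense (a unit eigenvalue of the appropriate $U_p$-type Hecke operator on $\Pi$). This step, which is carried out carefully in \cite{Urban98}, is the heart of the matter.

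Now Urban's theorem on ordinary Siegel modular forms (\cite{Urban05}) gives that $\rho_\Pi|_{D_p}$ is conjugate to an upper-triangular representation whose diagonal characters, restricted to inertia, are the expected powers of the $p$-adic cyclotomic character prescribed by the Hodge--Tate weights of $\Pi$ (here $\{0,0,1,1\}$ for the weight $2$ situation after appropriate normalization). Restricting to $D_\fq \subset G_F \subset G_\bfQ$ and picking out the $\rho_\pi$ summand yields $\rho_\pi|_{D_\fq} \cong \bsmat \Psi_1 & * \\ & \Psi_2 \esmat$ with $\Psi_2|_{I_\fq}$ unramified (the Hodge--Tate weight $0$ character) and $\Psi_1|_{I_\fq}$ of Hodge--Tate weight $1$. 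The identification $\Psi_1|_{I_\fq} = \epsilon$ follows by combining $\Psi_2|_{I_\fq} = 1$ with the determinant computation: since $\pi$ is unramified at $\fq$, its central character $\psi$ is unramified at $\fq$, so $\det\rho_\pi|_{I_\fq} = \epsilon$. The main obstacle is Step 2, namely propagating the $\GL_2/F$ ordinarity through the automorphic lift to the $\GSp_4/\bfQ$ ordinarity needed to apply Urban's Siegel result; every other step is then a bookkeeping of Hodge--Tate weights and determinants.
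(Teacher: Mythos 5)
The paper does not prove this statement; it is quoted verbatim as Corollary~2 of \cite{Urban05}, with the sentence preceding it merely indicating that Urban's work on \cite{Urban98} and on Galois representations of ordinary Siegel modular forms in \cite{Urban05} is what yields it. Your sketch is a correct high-level reconstruction of how that cited result is obtained (theta/Taylor lift to $\GSp_4$, transfer of ordinarity of the local component at $p$, Urban's ordinarity theorem for Siegel Galois representations, then the Hodge--Tate weight and determinant bookkeeping), and it matches the strategy the paper alludes to in the text immediately before the theorem.
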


\begin{definition} \label{modular1} Let $E$ be a finite extension of
$F_{\fp}$ and $\rho:
G_F \rightarrow \GL_2(E)$ a Galois representation. We say that $\rho$ is
\emph{modular} if there exists a cuspidal automorphic representation
$\pi$ as in Theorem \ref{attach1}, such that $\rho \cong \rho_{\pi}$
(possibly after enlarging $E$). \end{definition}

From now on we fix a finite extension $E$ of $F_{\fp}$ which we assume to
be sufficiently large (see section \ref{defining
rho_0} and Remarks \ref{r4.5} and \ref{OtoO'}, where 
this condition is 
made more precise). To 
simplify notation we put 
$\Oo:= 
\Oe$,
$\bfF=\bfF_E$ and $\varpi=\varpi_E$.

\section{Uniqueness of a certain residual Galois representation}
\label{Uniqueness of a certain residual Galois representation}

In this section we study residual Galois representations $\rho_0 : G_F
\rightarrow \GL_2(\bfF)$ of the form
$$\rho_0 = \bmat 1&*\\ & \chi_0\emat $$ having scalar centralizer for a
certain class of characters $\chi_0$ (cf. Definition \ref{adm}). We
show that for a fixed $\chi_0$ there exists at most one such
representation up to isomorphism (Theorem \ref{essuni}). In Section
\ref{Irreducible modular deformations of rho_0} we show that there
indeed exists one provided that $\val_p(L(0, \phi))>0$ for a certain
Hecke character $\phi$ of $F$ such that the reduction of $\phi_{\fp}
\epsilon$ is $\chi_0$. Alternatively, one could invoke the
generalizations of Kummer's criterion to imaginary quadratic fields
(see e.g. \cite{CoatesWiles77}, \cite{Yager}, \cite{Hida82},
\cite{LozanoRobledo07}).

Let $\Sigma$ be a finite set of finite primes of $F$ containing the
primes lying over $p$ and let $\chi_0: G_{\Sigma}
\rightarrow \bfF^{\times}$ be a Galois character.

\begin{definition} \label{adm} We say that $\chi_0$
is $\Sigma$-\emph{admissible} if all of the following conditions
are satisfied:
\begin{enumerate}
\item $\chi_0$ is ramified at $\fp$;
\item if $\fq \in \Sigma$, then either $\chi_0$ is ramified at $\fq$ or
$\chi_0^{-1} (\Frob_{\fq}) \neq \# \OF/\fq$ (as elements of
$\bfF$);
\item $\chi_0$ is anticyclotomic, i.e., $\chi_0(c \sigma c) =
\chi_0(\sigma)^{-1}$ for every $\sigma \in G_{\Sigma}$ and $c$ the
generator of $\Gal(F/\bfQ)$; \item
$\Cl_{F(\chi_0),p}^{\omega}=0$ (cf. Section \ref{Galois groups});
\item The $\chi_0^{-1}$-eigenspace of the $p$-part of $\Cl_{F(\chi_0)}$ is
trivial. \end{enumerate} \end{definition}
Note that Conditions (1) and (3) of Definition \ref{adm} imply that
$\chi_0$ is also ramified at $\ov{\fp}$.
%\com{notation in (4) not consistent with 2.1} \com{switched $c$ and
%$\tau$}
Fix $\tau \in I_{\fp}$ such that $\chi_0(\tau) \neq 1$. Let
$$\rho_0: G_{\Sigma} \rightarrow \GL_2(\bfF)$$ be a Galois
representation satisfying both of the following two conditions
\begin{description}
\item[(Red)]
$\rho_0
= \bmat 1 & * \\ & \chi_0 \emat$;
\item[(Sc)] $\rho_0$ has scalar
centralizer.
\end{description} We have the following tower of fields:
%$$\xymatrix{F(\rho_0) \ar@{-}[d]\\ F(\chi_0) \ar@{-}[d] \\ F}$$
$F \subset F(\chi_0) \subset F(\rho_0)$.
Note
that
$p$ does not divide $[F(\chi_0):F]$, $F(\rho_0)/F(\chi_0)$ is an
abelian extension of exponent $p$,
hence $\Gal(F(\rho_0)/F(\chi_0))$ can
be
regarded as an $\bfF_p$-vector space
$V_0$ on which the group $G:= \Gal(F(\chi_0)/F)$ operates
$\bfF_p$-linearly by
conjugation and
thus defines a
representation $$r_0: G \rightarrow
\GL_{\bfF_p}(V_0),$$ which is isomorphic to
the irreducible
$\bfF_p$-representation associated with $\chi_0^{-1}$.

Let $L$ denote
the maximal abelian extension of $F(\chi_0)$ unramified outside the set
$\Sigma$ and such that $p$ annihilates
$\Gal(L/F(\chi_0))$. Then, as before, $V:= \Gal(L/F(\chi_0))$ is an
$\bfF_p$-vector space endowed with an $\bfF_p$-linear action of $G$, and
one has $$V \otimes_{\bfF_p} \ov{\bfF}_p \cong \bigoplus_{ \varphi \in
\Hom (G, \ov{\bfF}_p^{\times})} V^{\varphi},$$ where for a
$\bfZ_p[G]$-module $N$ and an $\ov{\bfF}_p$-valued character $\varphi$ of 
$G$, we write
\be \label{eigen1} N^{\varphi} = \{n
\in N \otimes_{\bfZ_p} \ov{\bfF}_p \mid \sigma n = \varphi (\sigma) n \hf
\textup{for every} \hs \sigma \in G\}.\ee Note that $V_0 \otimes_{\bfF_p}
\ov{\bfF}_p$ is a direct summand of $V^{\chi_0^{-1}}$.
%$V \otimes_{\bfF_p} \ov{\bfF}_p$.

\begin{thm}\label{essuni} If $\chi_0$ is $\Sigma$-admissible,
then $\dim_{\ov{\bfF}_p} V^{\chi_0^{-1}} = 1$.
\end{thm}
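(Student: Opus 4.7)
The approach is to identify $V^{\chi_0^{-1}}$ with a Galois cohomology group of $G_\Sigma$ with coefficients in $\ov{\bfF}_p(\chi_0^{-1})$ and compute its dimension via the Greenberg--Wiles formula, with each admissibility condition eliminating one of the terms.

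Since $\chi_0$ takes values in $\bfF^{\times}$ (of order coprime to $p$), the group $G = \Gal(F(\chi_0)/F)$ has order coprime to $p$. Applying inflation--restriction to the tower $F \subset F(\chi_0) \subset L$ with trivial $\bfF_p$-coefficients yields
\[
V^{\chi_0^{-1}} \;\cong\; H^1(G_\Sigma, \ov{\bfF}_p(\chi_0^{-1})).
\]
Setting $M = \ov{\bfF}_p(\chi_0^{-1})$ and $M^* = \Hom(M,\mu_p) = \ov{\bfF}_p(\chi_0\omega)$, I would then invoke the Greenberg--Wiles formula for the unrestricted Selmer group. Specialized to $F$ imaginary quadratic (so $[F:\bfQ] - \#\{v \mid \infty\} = 1$), it reads
\begin{equation*}
\dim H^1(G_\Sigma, M) - \dim H^1_{\tust}(G_\Sigma, M^*) \;=\; \dim H^0(G_\Sigma, M) - \dim H^0(G_\Sigma, M^*) + 1 + \sum_{\fq \in \Sigma,\, \fq \nmid \infty} \dim H^0(G_{\fq}, M^*),
\end{equation*}
where $H^1_{\tust}$ denotes the strict Selmer group (classes vanishing locally at every place of $\Sigma$).

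The admissibility conditions are calibrated to kill every contribution on the right except the ``$+1$''. Condition~(1) gives $\chi_0 \neq 1$, so $\dim H^0(G_\Sigma, M) = 0$. The equality $\chi_0 = \omega^{-1}$, which would make $H^0(G_\Sigma, M^*) \neq 0$, is ruled out because $\omega$ descends from $G_{\bfQ}$ and hence satisfies $\omega^c = \omega$, while Condition~(3) forces $\chi_0^c = \chi_0^{-1}$; together these would require $\omega^2 = 1$, contradicting $p > 3$. For $\fq \in \Sigma$ with $\fq \nmid p$, a nonzero $H^0(G_\fq, M^*)$ would require $\chi_0^{-1}(\Frob_\fq) = \#\OF/\fq$ in the unramified case --- precisely what Condition~(2) forbids --- while the ramified case is automatic. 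For $\fq \mid p$, the ramification of $\chi_0$ at $\fp$ (Condition~(1)) and at $\ov{\fp}$ (via the anticyclotomic Condition~(3)) forces $H^0(G_\fq, M^*) = 0$ unless $\chi_0|_{I_\fq}$ coincides with $\omega^{-1}|_{I_\fq}$, an exceptional case ruled out by Condition~(4). Finally, $H^1_{\tust}(G_\Sigma, M^*) = 0$: via class field theory this strict Selmer group is controlled by a combination of the $\chi_0^{-1}$-eigenspace of $\Cl_{F(\chi_0)}/p$ (killed by Condition~(5)) and the $\omega$-part $\Cl_{F(\chi_0),p}^{\omega}$ (killed by Condition~(4)). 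Substituting into the Greenberg--Wiles formula gives $\dim V^{\chi_0^{-1}} = 1$.

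\textbf{Main obstacle.} The most delicate step is the vanishing of the dual strict Selmer group $H^1_{\tust}(G_\Sigma, M^*)$. This requires translating the cohomological vanishing into class-group statements at two different fields --- $F(\chi_0)$ and its $\mu_p$-extension $F(\chi_0)'$ --- and carefully verifying that the anticyclotomic structure of $\chi_0$ prevents any residual contribution from the places above $p$. This is the part of the argument where Conditions~(4) and~(5) interact most subtly, in a manner parallel to the treatment of analogous Selmer groups in \cite{SkinnerWiles97} for the case $F = \bfQ$.
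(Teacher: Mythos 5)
Your proposal takes a genuinely different route from the paper. The paper works directly with class field theory at the level of unit groups: it reduces to analyzing $\Gal(L_0/F(\chi_0))$ as a quotient of $(M/\ov{\mE})\otimes\bfF_p$ (using Condition (5) and Washington Cor.~13.6) and of $(M/T)\otimes\bfF_p$ (using Condition (3)), then pins down dimensions via an explicit eigenspace decomposition of local and global units, invoking Leopoldt's conjecture (Brumer) and a result of Gras controlled by Condition (4). You instead translate $V^{\chi_0^{-1}}$ into $H^1(G_\Sigma,\ov{\bfF}_p(\chi_0^{-1}))$ and deploy the Greenberg--Wiles formula. That translation and the resulting formula, including the net $+1$ from $[F:\bfQ]-1$, are correct, as are the vanishings of $H^0(G_\Sigma,M)$, $H^0(G_\Sigma,M^*)$ (anticyclotomic plus $p>3$), and $H^0(G_\fq,M^*)$ for $\fq\in\Sigma$, $\fq\nmid p$ (Condition (2)). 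If made rigorous this would be a clean alternative, and would even give the $\geq 1$ direction for free, where the paper only shows $\leq 1$ and relies on the existence of $\rho_0$ from the ambient setup for the reverse inequality.

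There are, however, two substantive gaps. First, for $\fq\mid p$ you attribute the vanishing of $H^0(G_\fq,M^*)$ --- equivalently ruling out $\chi_0|_{I_\fq}=\omega^{-1}|_{I_\fq}$ --- to Condition (4). But Condition (4) is a statement about the $\omega$-part of a class group of $F(\chi_0)(\mu_p)$ and has no apparent bearing on the local shape of $\chi_0$ at $p$. The parallel point in the paper (that $\chi_0^{-1}$ cannot occur in the $\mu_p$-torsion $T$ of the local units) is attributed to the anticyclotomic Condition (3), which is the natural candidate here as well; your attribution is a misfire, and this is exactly the step that needs the most care in either approach. Second, the ``main obstacle'' --- vanishing of $H^1_{\tust}(G_\Sigma,M^*)$ --- is stated but not resolved, and your sketch of how it follows is off: unwinding by inflation--restriction to $F(\chi_0)(\mu_p)$, this strict Selmer group is dual to the $\chi_0\omega$-eigenspace of the $p$-part of $\Cl_{F(\chi_0)(\mu_p)}$ modulo the $p$-primes, and since $\chi_0$ is trivial on $\Gal(F(\chi_0)(\mu_p)/F(\chi_0))$, that eigenspace sits inside the $\omega$-part $\Cl_{F(\chi_0),p}^{\omega}$, so Condition (4) already kills it and Condition (5) does not enter. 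That discrepancy with the paper --- which uses Condition (5) crucially to pass from the ray class group to local/global units --- should be a red flag: you should either explain why Condition (5) becomes redundant in the Greenberg--Wiles formulation (plausibly a reflection-type phenomenon linking the $\chi_0^{-1}$- and $\chi_0\omega$-eigenspaces) or locate where it actually intervenes in your argument. As it stands the proposal is a promising plan rather than a proof.
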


\begin{proof} Let $L_0$ be the maximal abelian extension of
$F(\chi_0)$ of exponent $p$ unramified outside the set $\Sigma$ and such
that $G$
acts on $\Gal(L_0/F(\chi_0))$ via the irreducible
$\bfF_p$-representation associated with $\chi_0^{-1}$. It is
enough to show that
$$\dim_{\ov{\bfF}_p}(\Gal(L_0/F(\chi_0))\otimes \ov{\bfF}_p) \leq
1.$$

Condition (2) of Definition \ref{adm} ensures that $L_0/F(\chi_0)$
is unramified outside the set $\{ \fp, \ov{\fp}\}$. Hence it is
enough to study the extensions $L/F(\chi_0)$ and $L_0/F(\chi_0)$
with $\Sigma = \{ \fp, \ov{\fp}\}$. For $\fp_0 \in \{\fp , \ov{\fp}
\}$ let $S_{\fp_0}$ be the set of primes of $F(\chi_0)$ lying over
$\fp_0$ and put $S_p:= S_{\fp} \cup S_{\ov{\fp}}$. Write $M$ for
$\prod_{\fq \in S_p} (1+ \fq)$ and $T$ for the torsion submodule of
$M$. By condition (5) of Definition \ref{adm} and Class Field Theory (see,
for example, Corollary 13.6 in \cite{Washingtonbook}) one has
$\Gal(L/F(\chi_0)) \cong (M/\ov{\mE}) \otimes \bfF_p$,
where $\ov{\mE}$ is the closure of $\mE$, the group of units
of the ring of integers of $F(\chi_0)$ which are congruent to 1
modulo every prime in $S_p$. Hence $\Gal(L_0/F(\chi_0))$ is a quotient of
$(M/\ov{\mE}) \otimes \bfF_p$.
On the other hand, using condition (3) of Definition \ref{adm}
one can show that
$\Gal(L_0/F(\chi_0))$ is a quotient of $(M/T)\otimes \bfF_p$. This
follows from the fact that $T$ is a product of the groups $\mu_p$; thus
$\chi_0$ being anticyclotomic by condition (3) of Definition
\ref{adm} cannot occur in
$T$. We will now study both $(M/T)\otimes \ov{\bfF}_p$ and $(M/\ov{\mE})
\otimes \ov{\bfF}_p$, beginning with the former one.

Let $G^{\vee}:= \Hom(G,
\ov{\bfF}_p^{\times})$. Since $G$ is
abelian,
$(M/T) \otimes \ov{\bfF}_p$ decomposes into a direct sum of $\ov{\bfF}_p
[G]$-modules $$(M/T)  \otimes
\ov{\bfF}_p =
\bigoplus_{\psi \in G^{\vee}} (M/T)^{\psi},$$
with $(M/T)^{\psi}$ defined as in (\ref{eigen1}).
Note
that we can refine this by writing $$M/T = \prod_{\fp_0\in \{\fp,
\ov{\fp}\}}
M_{\fp_0}/T_{\fp_0},$$ where $M_{\fp_0} = \prod_{\fq \in S_{\fp_0}}
(1+\fq)$ and $T_{\fp_0}$ is the torsion subgroup of $M_{\fp_0}$. Each
$M_{\fp_0}/T_{\fp_0}$ is $G$-stable.

\begin{lemma} \label{propdim} Let $\fp_0\in \{\fp,\ov{\fp}\}$. For every
$\psi \in G^{\vee}$, we have
$$\dim_{\ov{\bfF}_p} (M_{\fp_0}/T_{\fp_0})^{\psi} = 1.$$
\end{lemma}

\begin{proof} [Proof of Lemma \ref{propdim}] Note that to decompose
$(M_{\fp_0}/T_{\fp_0})
\otimes
\ov{\bfF}_p$ it is
enough to decompose $\prod_{\fq \in S_{\fp_0}} \fq \otimes \ov{\bfF}_p$,
since
$(1+\fq)/(\textup{torsion}) \cong \fq$ as $\bfZ_p[D_{\fq}]$-modules, where
$D_{\fq}$
denotes the decomposition group of $\fq$. It is not difficult to see that
$$\prod_{\fq \in S_{\fp_0}} \fq \otimes \ov{\bfF}_p \cong \bigoplus_{\phi
\in
G^{\vee}} \ov{\bfF}_p^{\phi},$$ where $\ov{\bfF}_p^{\phi}$ denotes the
one-dimensional $\ov{\bfF}_p$-vector space on which $G$ acts via $\phi$.
The Lemma follows easily. \end{proof}

Consider the exact sequence of $G$-modules \be \label{iota1}
\ov{\mE}\otimes\ov{\bfF}_p
\xrightarrow{\iota} M \otimes\ov{\bfF}_p
\rightarrow (M/\ov{\mE})
\otimes\ov{\bfF}_p \rightarrow 0.\ee

\begin{lemma} \label{Gras2} $\ker
\iota = 0$.
\end{lemma}

\begin{proof} [Proof of Lemma \ref{Gras2}] For a finitely generated
$\bfZ$-module $A$, write $\rk_p (A)$ for the dimension of the
$\bfF_p$-vector space $A/pA$. First note that since $\chi_0$ is
anticyclotomic, $\mu_p \not\subset F(\chi_0)$ and thus $\ov{\mE}$ is a
free $\bfZ_p$-module. By the Leopoldt conjecture (which is known for
$F(\chi_0)$ by a result of Brumer \cite{Brumer67}) we have
$\rk_{\bfZ_p} \ov{\mE} = r_2-1$ and since $\rk_{\bfZ_p} M = 2r_2$, we
can find a basis of the free part of $M$ (if the relative
ramification index $e$ of $\fp$ in the extension $F(\chi_0)/F$ is
smaller than $p-1$, then $M$ is free) such that the image of
$\ov{\mE}$ lands in the first $r_2-1$ $\bfZ_p$-factors of $M_{\rm
free} \cong \bfZ_p^{2r_2}$.
%\com{added subscript free}
Under this
identification we have
$$M/\ov{\mE} = (\bfZ_p^{r_2-1}/\ov{\mE}) \times \bfZ_p^{r_2+1} \times
T.$$
 Note that $\ker \iota=0$ if and only if there
does not
exist $m \in M \setminus \ov{\mE}$ whose $p$-th power is in
$\ov{\mE}\setminus \{1\}$.
Hence
$\ker \iota=0$ if and only if $\rk_p(M/\ov{\mE})=r_2+1+d$, where
$d=\rk_p(T)$  is
the
number of
primes $\fq$ of $F(\chi_0)$ over $p$ such that $\mu_p \subset
F(\chi_0)_{\fq}$ (since $F(\chi_0)$ is
anticyclotomic, $d$ equals the
number of primes of $K$ over $p$ (if $e=p-1$) or zero (if $e<p-1$)). Let
$L'/F(\chi_0)$ be the maximal abelian pro-$p$ extension of $F(\chi_0)$
unramified outside $p$. (The group $\Gal(L/F(\chi_0))$ is the maximal
quotient of $\Gal(L'/F(\chi_0))$ of exponent $p$.) By Class Field Theory
$\rk_p(M/\ov{\mE})\leq \rk_p(\Gal(L'/F(\chi_0)))$
(equality holds if the $p$-part of the class group of $F(\chi_0)$ is
trivial).
We have $\Gal(L'/F(\chi_0)) \cong \bfZ_p^{r_2+1} \oplus \mT_p$, where
$\mT_p$
denotes the torsion subgroup.
So, if we show that $\rk_p(\mT_p)= d$, we are done.
This
follows immediately from
condition (4) of Definition \ref{adm} and \cite{Gras03},
Proposition 4.2.2 (p. 283). This completes the proof of the Lemma.
\end{proof}
We are now ready to complete the proof of Theorem \ref{essuni}.
Recall that the tensor product $\Gal(L_0/F(\chi_0))\otimes \ov{\bfF}_p$
is both a
quotient of $(M/T)\otimes \ov{\bfF}_p$ and of $(M/\ov{\mE})\otimes
\ov{\bfF}_p$. Since $(M/T) \otimes \ov{\bfF}_p =
(M_{\fp}/T_{\fp})\otimes \ov{\bfF}_p \times
(M_{\ov{\fp}}/T_{\ov{\fp}})\otimes \ov{\bfF}_p$, Lemma \ref{propdim}
implies that $$(M/T) \otimes \ov{\bfF}_p =\prod_{\psi \in G^{\vee}}
\left(\ov{\bfF}_p^{\psi}\times \ov{\bfF}_p^{\psi}\right).$$ On the other
hand one has
$$\ov{\mE}\otimes
\ov{\bfF}_p = \prod_{\psi
\in G^{\vee}\setminus \{1\}} \ov{\bfF}_p^{\psi}.$$ Using Lemma \ref{Gras2}
and the fact that $p$ annihilates $T$,
one can easily show the injectivity of the composite $$\ov{\mE}\otimes
\ov{\bfF}_p
\xrightarrow{\iota}
M\otimes \ov{\bfF}_p \xrightarrow{\pi} (M/T)\otimes \ov{\bfF}_p,$$ where
$\pi$ is the natural projection. So, $\Gal(L_0/F(\chi_0))\otimes
\ov{\bfF}_p$ is a quotient of $$((M/T)\otimes \ov{\bfF}_p)/(\ov{\mE}
\otimes
\ov{\bfF}_p) \cong \ov{\bfF}_p^1 \times \ov{\bfF}_p^1 \times \prod_{\psi
\in
G^{\vee}\setminus \{1\}}
\ov{\bfF}_p^{\psi}.$$ Since $\chi_0 \neq 1$, we have $\dim_{\ov{\bfF}_p}
(\Gal(L_0/F(\chi_0))\otimes
\ov{\bfF}_p) \leq 1$, which we wanted to show.\end{proof}

\begin{cor} \label{essuni2} Suppose $\rho': G_{\Sigma} \rightarrow
\GL_2(\bfF)$
is a
Galois representation satisfying conditions \textup{(Red)} and
\textup{(Sc)} for a
$\Sigma$-admissible character $\chi_0$. Then $\rho' \cong \rho_0$. 
\end{cor}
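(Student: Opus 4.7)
My approach is cohomological. A representation of the form $\bmat 1 & * \\ 0 & \chi_0 \emat$ is, up to conjugation by upper-triangular matrices, determined by the cohomology class in $H^1(G_\Sigma, \bfF(\chi_0^{-1}))$ of an appropriate normalization of the upper-right entry. Condition (Sc) is equivalent to this class being non-zero: if it were trivial then $\rho'$ would be diagonalizable, and a diagonal representation with distinct diagonal characters $1, \chi_0$ is centralized by the full diagonal torus in $\GL_2(\bfF)$. Moreover, two such representations are conjugate in $\GL_2(\bfF)$ precisely when their cohomology classes are $\bfF^\times$-proportional: any intertwining matrix must preserve the $\chi_0$-eigenline and hence be upper-triangular, and conjugation by $\bmat a & b \\ 0 & d \emat$ rescales the class by $a/d$. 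Thus the corollary reduces to the statement $\dim_\bfF H^1(G_\Sigma, \bfF(\chi_0^{-1})) = 1$.

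To prove this I would apply inflation-restriction to $1 \to H \to G_\Sigma \to G \to 1$, where $H = \Gal(F_\Sigma/F(\chi_0))$ and $G = \Gal(F(\chi_0)/F)$. Since $|G|$ is coprime to $p$ and the coefficient module is $p$-torsion, $H^i(G, \bfF(\chi_0^{-1})) = 0$ for $i \geq 1$, yielding an isomorphism $H^1(G_\Sigma, \bfF(\chi_0^{-1})) \cong \Hom_{\bfF_p[G]}(V, \bfF(\chi_0^{-1}))$, where $V = \Gal(L/F(\chi_0))$ is the module studied in the proof of Theorem \ref{essuni} (a continuous cocycle $H \to \bfF(\chi_0^{-1})$ factors through the maximal abelian exponent-$p$ quotient of $H$ unramified outside $\Sigma$, namely $V$). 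After base change to $\ov\bfF_p$ and decomposing $\bfF(\chi_0^{-1}) \otimes_{\bfF_p} \ov\bfF_p$ into the direct sum of characters $\ov\bfF_p(\sigma\chi_0^{-1})$ indexed by the $\Gal(\bfF/\bfF_p)$-orbit of $\chi_0^{-1}$, the Hom space becomes $\bigoplus_\sigma (V^{\sigma\chi_0^{-1}})^{\vee}$. Theorem \ref{essuni} gives $\dim_{\ov\bfF_p} V^{\chi_0^{-1}} = 1$, and by Galois symmetry the same holds for each $\sigma\chi_0^{-1}$; a dimension count over $\bfF_p$ then produces $\dim_\bfF H^1 = 1$.

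Combining the two steps, the non-zero classes of $\rho_0$ and $\rho'$ lie in a one-dimensional $\bfF$-space and are therefore $\bfF^\times$-proportional, whence the first paragraph produces an upper-triangular conjugation realizing $\rho' \cong \rho_0$. The main piece of care required is tracking $\bfF$-dimensions versus the $\ov\bfF_p$-dimensions appearing in Theorem \ref{essuni} when the values of $\chi_0$ generate a proper subfield of $\bfF$; this is a routine Galois-orbit bookkeeping rather than a conceptual obstacle.
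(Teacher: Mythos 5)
Your argument is correct and is exactly the argument the paper intends (the corollary is stated without an explicit proof): condition (Sc) supplies a nonzero class in $H^1(G_\Sigma, \bfF(\chi_0^{-1}))$, inflation--restriction with $p\nmid |G|$ identifies that space with $\Hom_{\bfF_p[G]}(V, \bfF(\chi_0^{-1}))$, and the $\Gal(\ov{\bfF}_p/\bfF_p)$-orbit bookkeeping converts $\dim_{\ov{\bfF}_p} V^{\chi_0^{-1}}=1$ from Theorem~\ref{essuni} into $\dim_{\bfF} H^1=1$. The only slip is cosmetic: the line that an intertwiner must preserve is the unique $\rho'$-stable line $\bfF e_1$, on which $G_\Sigma$ acts by the \emph{trivial} character rather than $\chi_0$ (a $\chi_0$-stable complementary line cannot exist once the extension class is nonzero), but the conclusion that the intertwiner is upper-triangular is unaffected.
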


\section{Modular Forms and Galois representations}\label{Irreducible
modular deformations of rho_0}

In this section we exhibit irreducible ordinary Galois
representations that are residually reducible and arise from weight 2
cuspforms.

\subsection{Eisenstein congruences} \label{s4.1}
Let
$\phi_1,\phi_2$ be two Hecke characters with
infinity types $\phi_{1}^{(\infty)}(z)=z$
and $\phi_{2}^{(\infty)}(z)=z^{-1}$. Put $\gamma=\phi_1 \phi_2$. Write
$\fM$ for the conductor of $\phi:=\phi_1/\phi_2$.

Denote by $\mathfrak{S}$ the finite set of places where both
$\phi_i$ are ramified, but $\phi$ is unramified. Write
$\mathfrak{M}_i$ for the conductor of $\phi_i$. For an ideal
$\mathfrak{N}$ in $\mathcal{O}_F$ and a finite place ${\fq}$ of $F$
put $\mathfrak{N}_{\fq}=\mathfrak{N} \mathcal{O}_{F,{\fq}}$. We
define
$$K^1(\mathfrak{N}_{\fq})=\left \{
\begin{pmatrix}
  a & b \\
  c & d
\end{pmatrix}
\in {\rm GL}_2(\mathcal{O}_{F,{\fq}}), a-1, c \equiv 0 \,
\mod{\mathfrak{N}_{\fq}} \right \},$$ and
$$U^1(\mathfrak{N}_{\fq})=\{k \in {\rm GL}_2(\mathcal{O}_{F,{\fq}}):
{\rm det}(k) \equiv 1 \mod{\mathfrak{N}_{\fq}} \}.$$ Now put \be
\label{ourcompact} K_{\tuf}:=\prod_{{\fq} \in \fS}
U^1(\mathfrak{M}_{1,{\fq}}) \prod_{{\fq} \notin \fS}
K^1((\mathfrak{M}_1 \mathfrak{M}_2)_{\fq}) \subset
G(\bfA_{\tuf}).\ee

From now on, let $\Sigma$ be a finite set of places of $F$ containing
$$S_{\phi}:=\{{\fq} \mid \fM \fM^c \mathfrak{M}_1 \mathfrak{M}_1^c\} \cup \{{\fq}
\mid p d_F\}.$$

%\com{$\fM \mathfrak{M}_1$ is included b/c of possible level of
%modular form, the conjugate and $d_F$ b/c of what we know about
%$\rho_{\pi}$- conjecturally the latter two are not necessary}

We denote by $\bfT(\Sigma)$ the $\Oo$-subalgebra of
$\End_{\Oo}(S_2(K_{\tuf},\gamma))$
generated by the Hecke operators
$T_{\fq}$
for all places $\fq \not\in \Sigma$. Following \cite{Taylorthesis}
(p. 107) we define idempotents $e_{\fp}$ and $e_{\ov{\fp}}$,
commuting with each other and with $\bfT(\Sigma)$ acting on
$S_2(K_{\tuf},\gamma)$. They are characterized by the property that
any element $h \in X:= e_{\fp} e_{\ov{\fp}} \hs
S_2(K_{\tuf},\gamma)$ which is an eigenvector for $T_{\fp}$ and
$T_{\ov{\fp}}$ satisfies $|a_{\fp}(h)|_p = |a_{\ov{\fp}}(h)|_p=1$,
where $a_{\fp}(h)$ (resp. $a_{\ov{\fp}}(h)$) is the
$T_{\fp}$-eigenvalue (resp. $T_{\ov{\fp}}$-eigenvalue) corresponding
to $h$. Let $\bfT^{\tuord}(\Sigma)$ denote the quotient algebra of
$\bfT(\Sigma)$ obtained by restricting the Hecke operators to $X$.

Let $J(\Sigma) \subset \bfT(\Sigma)$ be the ideal generated by
$$\{T_{\fq} - \phi_1(\varpi_{\fq}) \cdot \#(\Oo_{F, \fq}/\varpi_{\fq})-
\phi_2(\varpi_{\fq}) \mid \fq \not\in \Sigma \}.$$

\begin{definition} Denote by $\fm(\Sigma)$ the
maximal ideal of $\bfT^{\tuord}(\Sigma)$ containing the image of
$J(\Sigma)$. We set $\bfT_{\Sigma}:=
\bfT^{\tuord}(\Sigma)_{\fm(\Sigma)}$. Moreover, set $J_{\Sigma}:=
J(\Sigma) \bfT_{\Sigma}$. We refer to $J_{\Sigma}$ as the
\emph{Eisenstein ideal of $\bfT_{\Sigma}$}.
\end{definition}

\begin{thm} [\cite{Berger05}, Theorem 6.3, \cite{B09}
Theorem 14] \label{Eiscong} Let $\phi$ be an unramified Hecke
character of infinity type $\phi^{(\infty)}(z)=z^2$. There exist
Hecke characters $\phi_1, \phi_2$ with $\phi_1/\phi_2=\phi$ such
that their conductors are divisible only by ramified primes or inert
primes not congruent to $\pm 1 \mod{p}$ and such that $$\#
(\bfT_{\Sigma}/J_{\Sigma}) \geq \#(\Oo/(L^{\rm int}(0,\phi))).$$
\end{thm}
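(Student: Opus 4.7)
My plan is to reduce the statement to the existing cohomological congruence results cited (\cite{Berger05} Theorem 6.3 and \cite{B09} Theorem 14) by first producing the characters $\phi_1,\phi_2$ with the required ramification behavior, then invoking the Eisenstein cohomology construction to promote the $L$-value into a genuine congruence between a cuspidal eigenform and an Eisenstein class.

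The first step is the selection of $\phi_1$ and $\phi_2$. Starting from the unramified Hecke character $\phi$ of infinity type $z^2$, I would take $\phi_2$ to be a Hecke character of infinity type $z^{-1}$ whose conductor $\fM_2$ is supported only on primes ramified in $F/\bfQ$ or on inert primes $q \not\equiv \pm 1 \pmod p$, and then set $\phi_1 := \phi\,\phi_2$ (of infinity type $z$). The flexibility to prescribe such $\phi_2$ follows from the standard existence theorem for Hecke characters of a given infinity type once one permits sufficient ramification at auxiliary primes; the congruence condition $q\not\equiv\pm 1\pmod p$ is exactly what guarantees that the local $L$-factors and local Eisenstein contributions at these auxiliary primes are $p$-adic units, so that they do not absorb any of the congruence recorded by $L^{\tuint}(0,\phi)$. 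This mirrors the choice made by Skinner--Wiles for the $\bfQ$-case and is what makes the bookkeeping at primes in $\Sigma\setminus\{\fq \mid p d_F\}$ harmless.

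Next, I would build the Eisenstein cohomology class. For $G=\Res_{F/\bfQ}\GL_2$, the symmetric space is a $3$-manifold with no complex structure, so one cannot use Hida's ordinary Hilbert modular form machinery directly. Instead, following Harder and the treatment in \cite{B09}, one constructs an Eisenstein class $\mathrm{Eis}(\phi_1,\phi_2)$ in the degree $1$ or $2$ cohomology of the arithmetic quotient of level $K_{\tuf}$ with coefficients in the appropriate local system, with constant term governed by $\phi_1$ on one boundary stratum and by $\phi_2\cdot\epsilon$ (or its arithmetic normalization) on the other. The crucial computation is that, after the normalization $L^{\tualg}\to L^{\tuint}$ from Section \ref{Notation and terminology}, the image of this class in the boundary cohomology is integral while its denominator measured against the integral structure on interior cohomology is precisely $L^{\tuint}(0,\phi)$ (up to $p$-adic units, which is why the conductor condition on the $\phi_i$ matters).

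Once this integrality statement is in hand, the standard Eisenstein-congruence argument (the imaginary quadratic analogue of Ribet's lemma, carried out in Section 6 of \cite{Berger05} and Theorem 14 of \cite{B09}) produces a cuspidal Hecke eigensystem in $\bfT^{\tuord}(\Sigma)_{\fm(\Sigma)}$ congruent modulo $L^{\tuint}(0,\phi)$ to the Eisenstein eigensystem $T_\fq \mapsto \phi_1(\varpi_\fq)\#(\Oo_F/\fq)+\phi_2(\varpi_\fq)$, and ordinarity is ensured by applying the idempotents $e_\fp$ and $e_{\ov\fp}$ (this is why $\phi$ being unramified at $p$ is used, so that both $\phi_1$ and $\phi_2$ give rise to $p$-unit Hecke eigenvalues at $\fp$ and $\ov\fp$). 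Measuring this congruence through the quotient $\bfT_\Sigma/J_\Sigma$ yields the desired inequality
\[
\#(\bfT_\Sigma/J_\Sigma) \;\geq\; \#\bigl(\Oo/(L^{\tuint}(0,\phi))\bigr).
\]
The main obstacle I anticipate is the integrality step: verifying that the Eisenstein cohomology class has the correct integral structure and that its denominator is not accidentally killed by contributions at auxiliary primes in $\Sigma$. This is exactly the role of the hypothesis $q\not\equiv\pm 1\pmod p$ for inert $q \mid \fM_1\fM_2$ and the absence of ramification of $\phi$ itself, and it is the technical heart of \cite{Berger05} and \cite{B09} on which the present theorem rests.
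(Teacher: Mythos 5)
Your proposal is essentially a correct reconstruction of the argument in \cite{Berger05} Theorem 6.3 and \cite{B09} Theorem 14, which the paper simply cites; the paper's own proof consists only of the observation (which you also make, via the idempotents $e_{\fp}$, $e_{\ov{\fp}}$) that the Eisenstein cohomology class in \cite{B09} is ordinary, so the congruence descends to the ordinary Hecke algebra $\bfT^{\tuord}(\Sigma)_{\fm(\Sigma)}$. So the approaches agree; you have merely unpacked the cited references in more detail than the paper does.
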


\begin{proof}
The Eisenstein cohomology class used in the proof of
\cite{B09} Theorem 14 is ordinary, so we can deduce the
statement for the ordinary cuspidal Hecke algebra.
\end{proof}

\begin{rem}
If $\phi$ is unramified then $\overline{\phi_{\mathfrak{p}}
\epsilon}$ is anticyclotomic (see \cite{B09} Lemma 1).
The condition on the conductor of the auxiliary character $\phi_1$
together with our assumption on the discriminant of $F$ therefore
ensure that for $\chi_0=\overline{\phi_{\mathfrak{p}} \epsilon}$
condition (2) of $\Sigma$-admissibility is automatically satisfied
for all primes $\fq \in S_{\phi}$.
\end{rem}

The assumption on the ramification of $\phi$ can be relaxed. For
example, Proposition 16 and Theorem 28 of \cite{Berger08} and
Proposition 9 and Lemma 11 of \cite{B09} imply the
following:

\begin{thm} \label{Eiscong+}
%Let $\phi_1$, $\phi_2$ be Hecke
%characters of infinity type $z$, $z^{-1}$ respectively.
Let $\phi_1$, $\phi_2$ be as at the start of this section. Assume both
$\fM_1$
and $\fM_2$ are coprime to
$(p)$ and divisible
only by primes split in $F/\bfQ$ and that $p \nmid
\#(\mathcal{O}_F/\fM \fM_1)^{\times}$. Suppose
$(\phi_1/\phi_2)^c=\overline{\phi_1/\phi_2}$. If
the torsion part of $H^2_c(S_{K_{\tuf}},\bfZ_p)$ is trivial, where
$$S_{K_{\tuf}}=G(\bfQ) \backslash G(\bfA)/K_{\tuf} U(2) \bfC^{\times}$$
then $$\# (\bfT_{\Sigma}/J_{\Sigma}) \geq \#(\Oo/(L^{\rm
int}(0,\phi_1/\phi_2))).$$
\end{thm}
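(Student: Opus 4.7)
The plan is to replicate the proof architecture of Theorem \ref{Eiscong}, replacing the unramified Eisenstein input by the more flexible construction that handles ramification at split primes, and then to invoke the stated torsion hypothesis exactly where the argument needs a lattice. The statement is essentially a concatenation of four cited inputs, so I will describe how they slot together rather than re-derive any of them.

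First I would use Proposition~16 of \cite{Berger08} together with Theorem~28 of \cite{Berger08} to produce an Eisenstein cohomology class attached to the pair $(\phi_1,\phi_2)$ living in the cohomology of $S_{K_{\tuf}}$ for the level $K_{\tuf}$ described in \eqref{ourcompact}. The condition that $\fM_1,\fM_2$ are coprime to $p$ and divisible only by split primes guarantees that the construction of \cite{Berger08} is available at these ramified places, and the cyclotomic-type compatibility $(\phi_1/\phi_2)^c=\overline{\phi_1/\phi_2}$ plays the role of the reality condition on the central character needed so that the class is attached to a coherent cohomological Eisenstein series. Theorem~28 of \cite{Berger08} performs the key integrality bookkeeping: it expresses the denominator of the normalized Eisenstein class in terms of $L^{\tuint}(0,\phi_1/\phi_2)$ together with Euler and Gauss-sum factors at the ramified split places, and under the auxiliary hypothesis $p\nmid\#(\OF/\fM\fM_1)^{\times}$ each of those auxiliary factors is a $p$-adic unit, so the effective denominator is exactly the stated $L$-value.

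Next I would use Proposition~9 of \cite{B09} to identify this class with an element of $H^1(S_{K_{\tuf}},E/\Oo)$, and Lemma~11 of \cite{B09} to show the class is ordinary at both $\fp$ and $\overline{\fp}$, so that it survives the action of the idempotents $e_{\fp}e_{\overline{\fp}}$ and lies in the ordinary part. The hypothesis that $H^2_c(S_{K_{\tuf}},\bfZ_p)$ has trivial torsion is then invoked to convert the analytic/denominator statement into an algebraic congruence: via the long exact cohomology sequence associated to $0\to\Oo\to E\to E/\Oo\to 0$, vanishing of the torsion in $H^2_c$ ensures that the boundary map $H^1(S_{K_{\tuf}},E/\Oo)\to H^2_c(S_{K_{\tuf}},\Oo)$ restricted to the ordinary Eisenstein part is controlled, so a class in $H^1(S_{K_{\tuf}},E/\Oo)$ annihilated by $\varpi^n$ where $n=\val_p(L^{\tuint}(0,\phi_1/\phi_2))$ lifts to an integral cohomology class that is congruent modulo $\varpi^n$ to a cuspidal class.

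Finally I would conclude by the standard lattice/deformation-of-Eisenstein argument: after localizing at the Eisenstein maximal ideal $\fm(\Sigma)$, the existence of a cuspidal class whose Hecke eigenvalues are congruent modulo $\varpi^n$ to those of the Eisenstein class produces a surjection of $\Oo$-modules
\[
\bfT_{\Sigma}/J_{\Sigma}\twoheadrightarrow \Oo/(L^{\tuint}(0,\phi_1/\phi_2)),
\]
yielding the asserted inequality on orders. The main obstacle, and the place where the new hypotheses are really working, is the integrality step: all the analytic inputs at the ramified split places (local Euler factors, normalization constants, Gauss sums entering the period) must be verified to be $p$-adic units so that no cancellation erodes the $L$-value bound. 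This is precisely the content of \cite{Berger08}~Theorem~28 under the assumption $p\nmid\#(\OF/\fM\fM_1)^{\times}$, so once that theorem is invoked the remainder of the argument proceeds exactly as in the unramified case of Theorem~\ref{Eiscong}.
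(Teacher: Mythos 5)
Your reconstruction matches the paper's approach: the paper offers no written-out proof of Theorem~\ref{Eiscong+}, asserting only that it follows from Proposition~16 and Theorem~28 of \cite{Berger08} together with Proposition~9 and Lemma~11 of \cite{B09}, and your account of how those inputs fit together --- Eisenstein class with $L^{\tuint}(0,\phi_1/\phi_2)$ controlling the denominator, ordinarity ensuring survival under $e_{\fp}e_{\ov{\fp}}$, and the torsion-freeness of $H^2_c(S_{K_{\tuf}},\bfZ_p)$ enabling the long-exact-sequence passage from denominators to Hecke-algebra congruences --- is the standard architecture for such Eisenstein-congruence results and is consistent with what the paper's own one-line proof of Theorem~\ref{Eiscong} suggests. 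The precise division of labor you assign to each cited result (e.g.\ whether the Gauss-sum and local unit bookkeeping sits in Theorem~28 of \cite{Berger08} or in the \cite{B09} lemmas) cannot be verified from the present text alone, but the outline is sound and correctly identifies where each new hypothesis of the theorem does its work.
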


\begin{rem}\label{r4.5}
In fact, by replacing $\bfZ_p$ by the appropriate coefficient
system, the result is true for characters $\phi_1 ,\phi_2$ of
infinity type $z \ov z^{-m}$ and $z^{-m-1}$, respectively, for $m
\geq 0$. For Theorems \ref{Eiscong} and \ref{Eiscong+}, the field E needs 
to 
contain the
values of the finite parts of $\phi_1$ and $\phi_2$ as well as 
$L^{\tuint}(0,\phi_1/\phi_2)$.
\end{rem}

We will from now on assume that we are either in the situation of
Theorem \ref{Eiscong} or \ref{Eiscong+} and fix the characters
$\phi_1, \phi_2$ and $\phi=\phi_1/\phi_2$, with corresponding
conditions on the set $\Sigma$ and definitions of $K_{\tuf}$,
$\bfT_\Sigma$, and $J_{\Sigma}$. We also assume from now on that
$\val_p(L^{\rm int}(0,\phi))>0$. Put
$\chi_0=\overline{\phi_{\mathfrak{p}} \epsilon}$ and assume that
$\chi_0$ is $\Sigma$-admissible. If we are in the situation of
Theorem \ref{Eiscong+} then suppose also that $\fM_1$ and $\fM_2$
are not divisible by any primes $\fq$ such that $\#(\Oo/\fq) \equiv
1 \mod{p}$. (This last assumption is only used  in the proof of
Theorem~\ref{minimal2}.)

\subsection{Residually reducible Galois representations} \label{defining
rho_0}

Write
$$S_2(K_{\tuf}, \gamma)_{\fm(\Sigma)}=\bigoplus_{\pi \in \Pi_\Sigma}
\pi_{\tuf}^{K_{\tuf}}$$ for a finite set $\Pi_\Sigma$ of ordinary
cuspidal automorphic representations with central character
$\gamma$, such that $\pi_{\tuf}^{K_{\tuf}} \neq 0$. The set
$\Pi_\Sigma$ is non-empty by Theorem \ref{Eiscong} under our
assumption that $\val_p(L^{\rm int}(0,\phi))>0$.

Let $\pi\in \Pi_{\Sigma}.$ Let $\rho_{\pi}: G_{\Sigma} \rightarrow
\GL_2(E)$ be the Galois representation attached to $\pi$ by Theorem
\ref{attach1} (This is another point where we assume that $E$ is large 
enough). The condition on the central character in Theorem
\ref{attach1} can be satisfied  (after possibly twisting with a
finite character) under our assumptions on $\phi$, see
\cite{B09} Lemma 8. The representation $\rho_{\pi}$ is
unramified at all $\fq \notin
S_{\phi}$, and satisfies
$$\tr
\rho_{\pi}(\Frob_{\fq}) = a_{\fq}(\pi)$$ and $$\det
\rho_{\pi}(\Frob_{\fq}) = \gamma(\varpi_{\fq}) \cdot \#(\OF/\fq).$$

By definition, $\bfT_{\Sigma}$ injects into $\bigoplus_{\pi \in
\Pi_\Sigma} \End_{\Oo}(\pi^{K_{\tuf}})$. Since $T_{\fq}$ acts on
$\pi$ by multiplication by $a_{\fq}(\pi)~\in~\mathcal{O}$ the Hecke
algebra
$\bfT_{\Sigma}$ embeds, in fact, into $B=\prod_{\pi \in \Pi_\Sigma}
\mathcal{O}$.

Observe that $\bigoplus_{\pi \in \Pi_\Sigma} \tr \rho_{\pi}(\sigma)
\in \bfT_{\Sigma}\subset B$ for all $\sigma \in G_{\Sigma}$. This
follows from the Chebotarev Density Theorem and the continuity of
$\rho_{\pi}$ (note that $\bfT_{\Sigma}$ is a finite $\Oo$-algebra).

Fix $\pi \in \Pi_\Sigma$ for the rest of this subsection. Define
$\rho'_{\pi}:= \rho_{\pi} \otimes \phi_{2,\fp}^{-1}$.
Then $\rho'_{\pi}$
satisfies
$$\tr \rho'_{\pi} (\Frob{\fq}) \equiv 1+ (\phi_{\fp}
\epsilon)(\Frob{\fq}) \pmod{\varpi} \quad \textup{for} \hs \fq\notin
S_{\phi},$$ and $$\det \rho'_{\pi} = \gamma \cdot \phi_{2, \fp}^{-2}
\cdot \epsilon = \phi_{\fp} \epsilon.$$ By choosing a suitable
lattice $\Lambda$ one can ensure that $\rho'_{\pi}$ has image inside
$\GL_2(\mathcal{O})$. The Chebotarev Density Theorem and the
Brauer-Nesbitt Theorem imply that
$$(\ov{\rho}'_{\pi})^{\tuss} \cong 1 \oplus \ov{\phi}_{\fp}
\ov{\epsilon}.$$

By Theorem \ref{attach1} $\rho'_{\pi}$ is irreducible, so a standard
argument (see e.g. Proposition 2.1 in \cite{Ribet76}) shows the lattice
$\Lambda$ may be chosen in such a way
that $\ov{\rho}'_{\pi}$ is not semi-simple and \be \label{choicela}
\ov{\rho}'_{\pi} = \bmat 1 & * \\ & \ov{\phi}_{\fp}
\ov{\epsilon}\emat.\ee
Hence
$\ov{\rho}'_{\pi}$ satisfies conditions (Red) and (Sc) of Section
\ref{Uniqueness of a certain residual Galois representation}. By
Theorem \ref{Urbanordinary}, $\rho'_{\pi}$
is ordinary which combined with (\ref{choicela}) implies
that \be \label{split1}\ov{\rho}'_{\pi}|_{D_{\ov{\fp}}} \cong \bmat 1 \\
& (\ov{\phi}_{\fp} \ov{\epsilon})|_{D_{\ov{\fp}}}\emat.\ee We put
\begin{equation} \label{defrho}
\rho_0:=\ov{\rho}'_{\pi}.\end{equation}

\begin{rem} \label{specialbasis} Let $\tau \in I_{\fp}$ be as in
Section \ref{Uniqueness of a certain residual Galois representation}. The
isomorphism in (\ref{split1})
implies that one can find a basis such that
$$\rho_0(\tau) =
\bmat 1
\\ & \chi_0(\tau) \emat,$$ and
$$\rho_0(g_0) = \bmat 1&1 \\ & 1\emat$$ for a fixed $g_0 \in
I_{\fp}$. Note that such a $g_0$ exists as it follows from the Proof
of Theorem \ref{essuni} that for $\rho_0$ satisfying (\ref{split1}),
the extension $F(\rho_0)/F(\chi_0)$ is totally ramified at $\fp$.

Furthermore, the ordinary modular deformations of $\rho_0$ in
Section 5.2 cannot be induced from a character of a quadratic
extension of $F$ because such representations split when restricted
to the decomposition groups $D_{\fq}$ for ${\fq} \mid p$. This
follows from Urban's result (Theorem 2.3) and the restriction of
these characteristic 0 representations being semisimple on an open
subgroup of each of the decomposition groups.

%\com{(the latter based on conjectural splitting of CM forms when
%restricted to decomposition groups), comment on conjecture of
%Calegari-Mazur}
\end{rem}

\section{Deformations of $\rho_0$}
Let $\Sigma$, $\phi$, $\chi_0$ and $\rho_0$ be as in Section
\ref{Irreducible
modular deformations of rho_0}. Recall that we have assumed that
$\chi_0$ is $\Sigma$-admissible and have shown in Section
\ref{defining
rho_0} that
$\rho_0$ satisfies conditions (Red) and (Sc) of Section
\ref{Uniqueness of a certain residual Galois representation}.
Hence by Corollary \ref{essuni2}, $\rho_0$
is unique up to isomorphism. By (\ref{split1}) the extension
$F(\rho_0)/F(\chi_0)$ is ramified at $\fp$ but splits at
$\ov{\fp}$. In this section we study
deformations of $\rho_0$.

\subsection{Definitions} \label{Deformations of rho_0}
Denote the category of local complete Noetherian $\Oo$-algebras with
residue field $\bfF$ by $\textup{LCN}(E)$. An $\Oo$-deformation of
$\rho_0$ is a pair consisting of $A \in \textup{LCN}(E)$ and an
equivalence class of continuous representations $\rho: G_{\Sigma}
\rightarrow \GL_2(A)$ such that $\rho_0 = \rho \pmod{\fm_A}$, where
$\fm_A$ is the maximal ideal of $A$. As is customary we will denote
a deformation by a single member of its equivalence class.  Note
that the Hodge-Tate weights of $\phi_{\mathfrak{p}} \epsilon$ are -1
at $\mathfrak{p}$ and +1 at $\ov {\mathfrak{p}}$.

%\begin{definition} Let $B$ denote the upper-triangular Borel subgroup of
%$\GL_2$. Let $A \in \textup{LCN}(E)$. We say that an $\Oo$-deformation
%$\rho: G_{\Sigma} \rightarrow
%\GL_2(A)$ of $\rho_0$ is \textit{nearly ordinary} if there exist
%$g_{\fp},
%g_{\ov{\fp}} \in \ker (\GL_2(A) \rightarrow \GL_2(\bfF_E))$ such that
%$g_{\fp} \rho(D_{\fp}) g_{\fp}^{-1} \subset B(A)$ and $g_{\ov{\fp}}
%\rho(D_{\ov{\fp}}) g_{\ov{\fp}}^{-1} \subset B(A)$. \end{definition}

\begin{definition} \label{ordinary} We say that an $\Oo$-deformation
$\rho: G_{\Sigma} \rightarrow \GL_2(A)$ of $\rho_0$ is
\emph{ordinary} if
$$\det \rho = \phi_{\fp} \epsilon$$ and
$$\rho|_{D_{\fp}} \cong \bmat \Psi_1 & * \\ & \Psi_2\emat $$ with
$\Psi_1$ unramified and $$\rho|_{D_{\ov{\fp}}} \cong \bmat \Psi_3 &
* \\ & \Psi_4\emat $$ with $\Psi_4$ unramified.
\end{definition}

Following \cite{SkinnerWiles97} we
make the following definition:

\begin{definition} \label{sigmamin} We will say that
a deformation $\rho$ of $\rho_0$ is \emph{$\Sigma$-minimal} if
$\rho$ is ordinary and for all primes $\fq\in \Sigma$ such that $\#
(\OF/\fq) \equiv 1$ \textup{(mod~$p$)} one has $$\rho|_{I_{\fq}} \cong 
\bmat 1
\\ &
\phi_{\fp}|_{I_{\fq}} \emat.$$
Note that by our assumption on the conductor of
$\phi$, we in fact have $\phi_{\fp}|_{I_{\fq}} = 1$
for $\fq$ as above. \end{definition}

Since $\rho_0$ has a scalar centralizer and $\Sigma$-minimality is a
deformation condition in the sense of \cite{Mazur97}, there exists a
universal deformation ring which we will denote by $R_{\Sigma,
\Oo} \in \textup{LCN}(E)$, and a universal
$\Sigma$-minimal $\Oo$-deformation $\rho_{\Sigma, \Oo} :
G_{\Sigma} \rightarrow \GL_2(R_{\Sigma, \Oo})$ such that
for every $A \in \textup{LCN}(E)$ there is a one-to-one
correspondence between the set of $\Oo$-algebra maps $R_{\Sigma,
\Oo} \rightarrow A$ (inducing identity on $\bfF$) and the
set of $\Sigma$-minimal deformations $\rho: G_{\Sigma} \rightarrow
\GL_2(A)$ of $\rho_0$.

%\begin{cor} \label{defform} Let $I_p$ be the closed subgroup of $G_F$
%generated by $I_{\fp} \cup I_{\ov{\fp}}$. There exists $\tau \in I_p$
%such that
%$\tau$ generates $\Gal(L_{\Psi}(\Sigma)/F(\Psi))$ as a
%$\bfZ_p[\Gal(F(\Psi)/F)]$-module and \be \label{111}\rho_0(\tau) =
%\bmat 1&1 \\ &1
%\emat.\ee \end{cor}

%\begin{proof} As shown in the proof of Lemma \ref{free1},
%a subgroup of $I_p$ surjects onto $\Gal(L_{\Psi}(\Sigma)/F(\Psi))$ under
%the map $G_{F(\Psi)} \twoheadrightarrow \Gal(L_{\Psi}(\Sigma)/F(\Psi))$.
%So
%the claim that there exists $\tau\in I_p$ which generates
%$\Gal(L_{\Psi}(\Sigma)/F(\Psi))$
%follows from (\ref{free2}). It is clear that $\tau$ can be
%chosen to satisfy (\ref{111}). \end{proof}

%Fix $\tau$ as in Corollary \ref{defform}. We can write down a
%reducible $\Oo$-deformation $\rho_{\Psi}: G_{\Sigma} \rightarrow
%\GL_2(\Oo_{\Psi})$ of $\rho_0$ as follows.

%Let $H$ be a lift of $\Gal(F(\Psi)/F)$ to $\Gal(L_{\Psi}(\Sigma)/F)$
%(containing $c$) and define $\rho_{\Psi}$ by declaring:
%$$\rho_{\Psi}|_H = \textup{pr} \circ \bmat 1 \\ & \Psi|_H
%\emat, $$ and
%$$\rho_{\Psi}(\tau) = \bmat 1& 1 \\ & 1\emat,$$
%where $\textup{pr}: G_{\Sigma} \twoheadrightarrow
%\Gal(L_{\Psi}(\Sigma)/F)$ is the natural projection. Note that
%$\rho_{\Psi}$ is a nearly ordinary deformation of $\rho_0$.

%\begin{lemma} \label{unique2} $\Psi$
%is the unique character for which $\rho_{\Psi}$ is
%$\Sigma$-minimal. \end{lemma}

%\begin{proof} Postponed. \end{proof}

\subsection{Irreducible modular deformations of $\rho_0$} The arguments
from Section \ref{defining rho_0} together with the
uniqueness of $\rho_0$ (Corollary \ref{essuni2}) can now be
reinterpreted as:
\begin{thm} \label{minimal2} For any $\pi \in \Pi_\Sigma$ there is an
$\Oo$-algebra homomorphism $r_{\pi}: R_{\Sigma, \Oo}
\twoheadrightarrow \mathcal{O}$ inducing $\rho'_{\pi}$.\end{thm}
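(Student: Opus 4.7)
The plan is to show that $\rho'_{\pi}$ is a $\Sigma$-minimal $\Oo$-deformation of $\rho_0$; the homomorphism $r_{\pi}$ then falls out of the universal property of $R_{\Sigma,\Oo}$.

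First I would observe that, thanks to the construction in Section \ref{defining rho_0} (choice of lattice $\Lambda$, together with the irreducibility of $\rho_{\pi}$ supplied by Theorem \ref{attach1} and the Ribet-style argument), the residual representation $\ov{\rho}'_{\pi}$ is non-semisimple and satisfies conditions (Red) and (Sc) with character $\chi_0 = \ov{\phi}_{\fp}\ov{\epsilon}$. Since $\chi_0$ is assumed to be $\Sigma$-admissible, Corollary \ref{essuni2} gives $\ov{\rho}'_{\pi} \cong \rho_0$, so after an inner conjugation $\rho'_{\pi}$ genuinely lifts $\rho_0$ and defines an $\Oo$-deformation.

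Next I would verify the three parts of $\Sigma$-minimality (Definition \ref{sigmamin}). The determinant computation $\det \rho'_{\pi} = \phi_{\fp}\epsilon$ is immediate from the twist by $\phi_{2,\fp}^{-1}$, as recorded in Section \ref{defining rho_0}. For the ordinarity at $\fp$ and $\ov{\fp}$, since $\pi \in \Pi_{\Sigma}$ lies in the image of the ordinary idempotent $e_{\fp}e_{\ov{\fp}}$, Urban's Theorem \ref{Urbanordinary} applies at both primes over $p$ and gives $\rho_{\pi}|_{D_{\fq}}\cong \bigl(\begin{smallmatrix}\Psi_1^{\pi}&*\\&\Psi_2^{\pi}\end{smallmatrix}\bigr)$ with $\Psi_2^{\pi}|_{I_{\fq}}=1$ and $\Psi_1^{\pi}|_{I_{\fq}}=\epsilon$ for $\fq\in\{\fp,\ov{\fp}\}$. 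Twisting by $\phi_{2,\fp}^{-1}$ and using that $\phi_2$ is unramified at $\fp$ and $\ov{\fp}$ (the conductor $\fM_2$ is coprime to $p$ in both Theorems \ref{Eiscong} and \ref{Eiscong+}), a short calculation of Hodge--Tate weights shows that the character $\phi_{2,\fp}^{-1}$ restricts to $\epsilon^{-1}$ on $I_{\fp}$ and to the trivial character on $I_{\ov{\fp}}$. Substituting this into the above block-upper-triangular form yields precisely the ordinary shape of Definition \ref{ordinary}, with unramified $\Psi_1$ at $\fp$ and unramified $\Psi_4$ at $\ov{\fp}$.

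For the inertial condition at auxiliary primes $\fq\in\Sigma$ with $\#(\OF/\fq)\equiv 1\pmod p$, I would argue that any such $\fq$ must lie outside $S_{\phi}$. Indeed, $\fq$ cannot divide $p$ (since $p$ is split and $\#(\OF/\fp)=p\not\equiv 1$), cannot be ramified in $F/\bfQ$ (since by hypothesis every $q\mid d_F$ satisfies $q\not\equiv\pm 1\pmod p$, and for ramified $\fq$ one has $\#(\OF/\fq)=q$), and cannot divide $\fM_1\fM_2$ (and hence not $\fM$) by the assumption at the end of Section \ref{s4.1}, which restricts the conductors to avoid primes of norm $\equiv\pm 1\pmod p$ in the setting of Theorem \ref{Eiscong} or $\equiv 1\pmod p$ in the setting of Theorem \ref{Eiscong+}. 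Consequently $\rho_{\pi}$ is unramified at such $\fq$, and so is $\rho'_{\pi}$; since $\phi$ is also unramified at $\fq$, the required shape $\rho'_{\pi}|_{I_{\fq}}\cong\bigl(\begin{smallmatrix}1\\&\phi_{\fp}|_{I_{\fq}}\end{smallmatrix}\bigr)$ holds trivially. Finally, the universal property of $R_{\Sigma,\Oo}$ furnishes an $\Oo$-algebra map $r_{\pi}:R_{\Sigma,\Oo}\to\Oo$ inducing $\rho'_{\pi}$, and surjectivity is automatic from $\Oo$-linearity.

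The main obstacle I anticipate is the bookkeeping for ordinarity after the twist by $\phi_{2,\fp}^{-1}$: one must verify that the Hodge--Tate weights shift in the correct direction at $\fp$ versus $\ov{\fp}$ so that the unramified diagonal entry ends up in the upper-left at $\fp$ but in the lower-right at $\ov{\fp}$, matching Definition \ref{ordinary}. Once that sign check is pinned down (using that $\phi_2$ has infinity type $z^{-1}$), the remaining verifications are structural consequences of the hypotheses collected in Sections \ref{Notation and terminology} and \ref{s4.1}.
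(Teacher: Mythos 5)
Your proof is correct and follows essentially the same approach as the paper. The paper's proof is much terser: it takes the deformation property (via Corollary~\ref{essuni2}), the determinant computation, and ordinarity as already established in Section~\ref{defining rho_0}, and only spells out the verification that no $\fq \in S_{\phi}$ has $\#(\OF/\fq) \equiv 1 \pmod{p}$ -- by construction in the setting of Theorem~\ref{Eiscong}, by assumption in the setting of Theorem~\ref{Eiscong+}; what you add is the explicit Hodge--Tate weight bookkeeping showing that the twist by $\phi_{2,\fp}^{-1}$ sends Urban's block-upper-triangular form (unramified in the lower-right at both $\fp$ and $\ov{\fp}$) to the asymmetric shape of Definition~\ref{ordinary}, which the paper leaves implicit.
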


\begin{proof}
The only property left to be checked is $\Sigma$-minimality. This is
clear since $\rho_{\pi}$ is unramified away from $S_{\phi}$, and no
$\fq \in S_{\phi}$ satisfies $\# (\OF/\fq) \equiv 1\pmod{p}$ by
construction (if we are in the case of Theorem \ref{Eiscong}) or
assumption (in the case of Theorem \ref{Eiscong+}).
\end{proof}

\begin{rem}
The assumption on the conductors of $\phi_1, \phi_2$ made at the end
of Section~\ref{s4.1} could be relaxed if local-global compatibility
was known for the Galois representations constructed by Taylor. For
a discussion of the Langlands conjecture for imaginary quadratic
fields see \cite{CalegariDunfield06} Conjecture 3.2.
\end{rem}

\begin{prop} \label{nored} There does not exist any non-trivial
upper-triangular $\Sigma$-minimal deformation of $\rho_0$ to
$\GL_2(\bfF[x]/x^2)$.
 \end{prop}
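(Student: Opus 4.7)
The plan is to reduce the problem to the uniqueness statement Theorem~\ref{essuni}: write an arbitrary upper-triangular $\Sigma$-minimal deformation as a Selmer cocycle in $H^1(G_\Sigma,\bfF(\chi_0^{-1}))$, show it lies in the $\bfF$-line spanned by the class $[\tilde b]$ of the extension defining $\rho_0$, and conclude that the deformation is equivalent to the trivial one $\rho_0\otimes_\bfF\bfF[x]/x^2$.

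In the fixed basis of $\rho_0=\bsmat 1 & b \\ 0 & \chi_0 \esmat$, write any upper-triangular lift of $\rho_0$ to $\bfF[x]/x^2$ as
$$\rho=\bsmat 1+x\alpha_1 & b+xc \\ 0 & \chi_0(1+x\alpha_2) \esmat,$$
with $\alpha_1,\alpha_2,c:G_\Sigma\to\bfF$. The fixed-determinant condition $\det\rho=\phi_\fp\epsilon$, whose image in $(\bfF[x]/x^2)^\times$ is the constant character $\chi_0$, forces $\alpha_1+\alpha_2=0$; set $\alpha=\alpha_1=-\alpha_2$, a homomorphism $G_\Sigma\to\bfF$. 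Ordinariness at $\fp$ puts $\psi_1=1+x\alpha$ in the unramified slot, giving $\alpha|_{I_\fp}=0$; at $\ov\fp$ the residual diagonals $1,\chi_0$ are distinct and $\chi_0$ is ramified (Definition~\ref{adm}(1),(3)), so the unramified character in the ordinary decomposition must again be $\psi_1$, yielding $\alpha|_{I_{\ov\fp}}=0$. $\Sigma$-minimality kills $\alpha|_{I_\fq}$ at primes of $\Sigma$ with $\#\OF/\fq\equiv 1\pmod p$, and at the remaining primes $\fq\in\Sigma$ not above $p$ the $p$-part of the tame inertia already vanishes. Hence $\alpha$ factors through $\Cl_F$ and is zero by the assumption $p\nmid\#\Cl_F$.

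With $\alpha=0$, the deformation is determined by $c$, and $\tilde c(g)=c(g)/\chi_0(g)$ defines a class $[\tilde c]\in H^1(G_\Sigma,\bfF(\chi_0^{-1}))$ lifting $[\tilde b]=[b/\chi_0]$. The ordinary condition at $\ov\fp$ requires $\rho|_{D_{\ov\fp}}$ to be semisimple (since the unramified character $\psi_1$ must occupy the lower-right slot, which is only possible after reordering when the residual diagonals are distinct if the restriction splits), and this translates to $\tilde c|_{D_{\ov\fp}}=0$; $\Sigma$-minimality at primes with $\#\OF/\fq\equiv 1\pmod p$ gives $\tilde c|_{I_\fq}=0$. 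Both conditions are also satisfied by $[\tilde b]$ (by \ref{split1} at $\ov\fp$, and by the construction of $\rho_0$ at auxiliary primes), so $[\tilde b]$ and $[\tilde c]$ lie in the same Selmer subgroup of $H^1(G_\Sigma,\bfF(\chi_0^{-1}))$.

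This Selmer subgroup is contained in $V^{\chi_0^{-1}}$ of Theorem~\ref{essuni}, which is one-dimensional; since $[\tilde b]\neq 0$ by condition (Sc) on $\rho_0$, we conclude $[\tilde c]=\lambda[\tilde b]$ for some $\lambda\in\bfF$. A direct computation shows that conjugating $\rho$ by $\bsmat 1 & 0 \\ 0 & 1+x\lambda \esmat\in\GL_2(\bfF[x]/x^2)$ replaces $c$ by $c-\lambda b$, making $[\tilde c]$ zero, so $\rho$ is strictly equivalent to $\rho_0\otimes_\bfF\bfF[x]/x^2$. The main subtlety is to match the Selmer conditions extracted from the $\Sigma$-minimal and ordinary hypotheses precisely with the definition of $V^{\chi_0^{-1}}$; once this identification is made, the vanishing follows directly from the dimension bound in Theorem~\ref{essuni}.
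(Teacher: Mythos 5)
Your proof is correct and runs along essentially the same lines as the paper's. Both arguments kill the diagonal perturbation $\alpha$ first (forcing $\beta=-\alpha\chi_0$ from the determinant, then using ordinarity at $\fp$, $\Sigma$-minimality and local class field theory away from $p$, ordinarity plus ramification of $\chi_0$ at $\ov\fp$, and finally $p\nmid\#\Cl_F$), and both then invoke Theorem~\ref{essuni} to pin down the off-diagonal entry. The one stylistic difference is in the last step: the paper observes that $\Gal(F(\rho)/F(\chi_0))$ is a quotient of $V$ and applies $\dim_{\ov{\bfF}_p}V^{\chi_0^{-1}}=1$ directly, with no local conditions, whereas you cut down further to a Selmer subgroup of $H^1(G_{\Sigma},\bfF(\chi_0^{-1}))$ using ordinarity at $\ov\fp$ and $\Sigma$-minimality before invoking the dimension bound. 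That extra cutting is harmless but unnecessary, since the full $\chi_0^{-1}$-eigenspace is already one-dimensional. You are also more explicit than the paper about the final conjugation identifying $\rho$ with the trivial deformation (the paper's ``$b_1=0$'' is really ``$b_1$ is a scalar multiple of $b_0$, so $\rho$ is strictly equivalent to $\rho_0\otimes_{\bfF}\bfF[x]/x^2$,'' which your conjugation by $\bsmat 1&0\\0&1+x\lambda\esmat$ makes precise).
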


\begin{proof} Let $\rho: G_\Sigma \rightarrow \GL_2(\bfF[x]/x^2)$ be an
upper-triangular $\Sigma$-minimal deformation. Then $\rho$ has the form
$$\bmat 1+ x
\alpha & *
\\ & \chi_0 + x\beta \emat $$ for $\alpha: G_\Sigma \rightarrow \bfF^+$ a
group homomorphism (here $\bfF^+$ denotes the additive group of
$\bfF$) and $\beta: G_\Sigma \rightarrow \bfF$ a function.

By ordinarity of $\rho$ we have $\det \rho = \chi_0$, which forces
$\beta = - \a \chi_0$. Let $\fq$ be a prime of $F$ and consider the
restriction of $\alpha$ to $I_{\fq}$. If $\fq \in \Sigma$, $\fq \neq
\fp, \ov{\fp}$ and $\#(\OF/ \fq) \not\equiv 1$ mod $p$, one must have (by
local class field theory) that $\a (I_{\fq})=0$. If $\fq \in \Sigma$
and $\#(\OF/ \fq) \equiv 1$ mod $p$ (resp. $\fq=\fp$), then
$\Sigma$-minimality (resp. ordinarity at $\fp$) implies that
$\a(I_{\fq})=0$. Thus $\alpha$ can only be ramified at $\ov{\fp}$.
However, since $\rho$ is ordinary at $\ov{\fp}$, $\rho|_{I_{\ov{\fp}}}$
can be conjugated to a representation of the form $$\bmat 1 \\ *& \chi_0
\emat.$$ This, together with the fact that $\chi_0$ is ramified at
$\ov{\fp}$ (see the remark after Definition \ref{adm}) easily implies
that $\alpha$ must be unramified at $\ov{\fp}$.
%we must have that
%either $1 + x \alpha$ or $\chi_0 + x \beta = \chi_0(1-x \alpha)$ is
%unramified at $\ov{\fp}$. Since $\chi_0$ is anticyclotomic, it must
%be ramified at $\fp$ and $\ov{\fp}$ to the same degree. Since it is
%ramified at $\fp$ by assumption, it must also be ramified at
%$\ov{\fp}$. Thus $1+x \alpha$ and hence also $\alpha$ must be
%unramified at $\ov{\fp}$.
Since $p \nmid \# \Cl_F$, we must have
$\alpha=0$. Hence $\rho$ is of the form $$\bmat 1 & * \\ &
\chi_0\emat$$ and for $G'={\rm ker}(\chi_0) \subset G_\Sigma$ we
have
$$\rho|_{G'} = \bmat 1 & b_0 + x b_1 \\ & 1 \emat$$ for $b_0, b_1:
G' \rightarrow \bfF^+$ group homomorphisms. Note that
$F(\rho)/F(\chi_0)$ is thus an abelian extension unramified outside
$\Sigma$ which is anihilated by $p$. Moreover, $\Gal(F(\chi_0)/F)$
acts on $\Gal(F(\rho)/F(\chi_0))$ via $\chi_0^{-1}$. Since $\chi_0$
is $\Sigma$-admissible, Theorem \ref{essuni} implies that
$\Gal(F(\rho)/F(\chi_0)) \cong \bfF$. Since $\rho \equiv \rho_0$ mod
$x$, we see that $b_1=0$ and thus $\rho$ must be the trivial
deformation of $\rho_0$.
\end{proof}

\begin{prop}
The universal deformation ring $R_{\Sigma, \Oo}$ is
generated as an $\Oo$-algebra by traces.
\end{prop}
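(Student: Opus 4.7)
The plan is to combine an explicit basis normalization of the universal deformation $\rho_{\Sigma, \Oo}$ with pseudo-character theory for residually multiplicity-free reducible representations in the style of \cite{BellaicheChenevier06}. Let $T \subseteq R_{\Sigma, \Oo}$ be the closed $\Oo$-subalgebra topologically generated by $\{\tr \rho_{\Sigma, \Oo}(g) : g \in G_{\Sigma}\}$; the goal is $T = R_{\Sigma, \Oo}$. Note that $T$ coincides with the image in $R_{\Sigma, \Oo}$ of the universal pseudo-deformation ring of $\tr \rho_0$, hence is itself a complete local Noetherian $\Oo$-algebra and in particular Henselian.

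Using Remark~\ref{specialbasis}, I would pick $\tau, g_0 \in I_{\fp}$ with $\rho_0(\tau) = \diag(1, \chi_0(\tau))$ (where $\chi_0(\tau) \neq 1$) and $\rho_0(g_0) = \bmat 1 & 1 \\ 0 & 1 \emat$. Since $1$ and $\chi_0(\tau)$ are distinct modulo $\varpi$, Hensel's lemma in $R_{\Sigma, \Oo}$ lets me conjugate so that $\rho_{\Sigma, \Oo}(\tau) = \diag(A, D)$ with $A - D$ a unit; the residual diagonal-conjugation freedom can then be used to set the $(1,2)$-entry of $\rho_{\Sigma, \Oo}(g_0)$ equal to $1$. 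Write $\rho_{\Sigma, \Oo}(g) = \bmat a(g) & b(g) \\ c(g) & d(g) \emat$. The eigenvalues $A, D$ are the Henselian lifts in $T$ of the distinct residual roots of the characteristic polynomial $X^2 - \tr \rho_{\Sigma, \Oo}(\tau)\, X + (\phi_{\fp}\epsilon)(\tau) \in T[X]$, so $A, D \in T$. The linear system
\[
a(g) + d(g) = \tr \rho_{\Sigma, \Oo}(g), \qquad A\, a(g) + D\, d(g) = \tr \rho_{\Sigma, \Oo}(\tau g)
\]
has Vandermonde determinant $D - A$, a unit, giving $a(g), d(g) \in T$ for all $g$. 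Comparing $(1,1)$-entries of $\rho_{\Sigma, \Oo}(g_0 g) = \rho_{\Sigma, \Oo}(g_0) \rho_{\Sigma, \Oo}(g)$ then yields $c(g) = a(g_0 g) - a(g_0) a(g) \in T$.

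The main obstacle is showing $b(g) \in T$ for every $g$: the only identities available directly are of the form $b(g) c(h) + b(h) c(g) \in T$, which cannot be solved for individual $b$-values since every $c(h)$ lies in $\fm_{R_{\Sigma, \Oo}}$. To circumvent this I would invoke the pseudo-character formalism of \cite{BellaicheChenevier06}: since $\rho_0$ is residually multiplicity-free (as $\chi_0 \neq 1$, being ramified at $\fp$) and has scalar centralizer by \textup{(Sc)}, the $T$-valued pseudo-character $\tr \rho_{\Sigma, \Oo}$ is the trace of a genuine deformation $\rho_T : G_{\Sigma} \to \GL_2(T)$, unique up to $\GL_2(T)$-conjugation compatible with the normalization above. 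The deformation $\rho_T$ inherits the $\Sigma$-minimality conditions from $\rho_{\Sigma, \Oo}$ (they are preserved under strict equivalence), so by universality of $R_{\Sigma, \Oo}$ it is classified by an $\Oo$-algebra map $R_{\Sigma, \Oo} \to T$. Composing with $T \hookrightarrow R_{\Sigma, \Oo}$ produces a self-map of $R_{\Sigma, \Oo}$ that classifies $\rho_{\Sigma, \Oo}$ itself and must therefore be the identity. Consequently $T = R_{\Sigma, \Oo}$.
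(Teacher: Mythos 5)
Your approach is genuinely different from the paper's, and the first two-thirds of it is sound: the Hensel normalization at $\tau$, the Vandermonde solve showing $a(g), d(g)$ lie in the trace algebra $T$, and the product-with-$g_0$ trick giving $c(g) \in T$ are all correct (one can even push this a step further and get $b(g)c(h) \in T$ for all $g,h$ by combining $\tr\rho_{\Sigma,\Oo}(gh)$ with $\tr\rho_{\Sigma,\Oo}(g\tau h)$ and inverting $A-D$). But the key step --- ``the $T$-valued pseudo-character $\tr\rho_{\Sigma,\Oo}$ is the trace of a genuine deformation $\rho_T\colon G_\Sigma \to \GL_2(T)$'' --- is exactly where the argument breaks down. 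For residually \emph{irreducible} representations this is the Nyssen--Rouquier--Carayol theorem, but for residually reducible multiplicity-free pseudo-characters over a henselian local ring, Bella\"iche--Chenevier give a faithful Cayley--Hamilton representation into a \emph{generalized matrix algebra} $\bigl(\begin{smallmatrix} T & B \\ C & T \end{smallmatrix}\bigr)$, not into $M_2(T)$: the $T$-module $B$ need not embed in $T$, and the obstruction to such an embedding is precisely the discrepancy between the pseudo-deformation ring and the deformation ring. Asserting that this obstruction vanishes is essentially equivalent to what you are trying to prove, so the appeal to \cite{BellaicheChenevier06} at this point is a genuine gap rather than a citation of a standard fact. (There is also a secondary issue: even granting $\rho_T$, you would still need to check that $\rho_T\bmod\fm_T$ is literally $\rho_0$ and not the split reduction, and that $\rho_T\otimes_T R_{\Sigma,\Oo}\cong\rho_{\Sigma,\Oo}$ \emph{as deformations}, not merely as trace-equivalent representations.)

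The paper's proof sidesteps all of this by a short tangent-space computation in the spirit of Lemma~4.2 of \cite{Calegari06}. It reduces to showing that for any nontrivial deformation $\rho$ of $\rho_0$ to $\GL_2(\bfF[x]/x^2)$ the traces already generate $\bfF[x]/x^2$. Writing $\rho(\sigma)=\bigl(\begin{smallmatrix}1+xa & b_0+xb_1\\ xc & 1+xd\end{smallmatrix}\bigr)$ for $\sigma$ in $\Gal(\ov\bfQ/F(\chi_0))$, where $\det\rho=1$, one computes $\det\rho(\sigma)-\tr\rho(\sigma)=-1-x\,b_0(\sigma)c(\sigma)$. Proposition~\ref{nored} forces $c\ne 0$, and $b_0\ne 0$ since $\rho_0$ is nonsplit; as $b_0$ and $c$ are both nontrivial homomorphisms to $\bfF^+$ and $p>2$, their kernels cannot cover the group, so $b_0(\sigma)c(\sigma)\ne 0$ for some $\sigma$, and then $\tr\rho(\sigma)=2+x\,b_0(\sigma)c(\sigma)$ generates $\bfF[x]/x^2$. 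This is considerably more elementary and avoids the GMA subtleties entirely; if you wish to retain a trace-algebra argument, you would need an additional input (e.g.\ a one-dimensionality statement for $\Ext^1(\chi_0,1)$, which is what Theorem~\ref{essuni} provides, translated into a statement that the $B$-module of the GMA is cyclic over $T$) to close the gap.
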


\begin{proof}
We follow the argument of \cite{Calegari06}, Lemma 4.2. If suffices
to show that any non-trivial deformation of $\rho_0$ to ${\rm
GL}_2(\bfF[x]/x^2)$ is generated by traces. Let $\rho$ be such a
deformation. Observe that for $\sigma \in {\rm Gal}(\ov
\bfQ/F(\chi_0))$ the element $\rho(\sigma)$ can be written as
$$\begin{pmatrix}1+xa(\sigma)&b_0(\sigma)+
xb_1(\sigma)\\xc(\sigma)&1+x d(\sigma)\end{pmatrix},$$ so ${\rm
det}(\rho)(\sigma)-{\rm tr}(\rho)(\sigma)=-1-xb_0(\sigma)
c(\sigma)$. Since $c$ is non-trivial by Proposition
\ref{nored}, the Chebotarev Density Theorem implies
there exists a $\sigma$ such that $xb_0(\sigma) c(\sigma)\neq 0$.
Since ${\rm det}(\rho)(\sigma)=1$, it follows that the traces of
$\rho$ generate $\bfF[x]/x^2$.
\end{proof}

\begin{lemma} \label{imaget} The image of the map
$R_{\Sigma, \Oo} \rightarrow \prod_{\pi \in
\Pi_{\Sigma}}\mathcal{O}$ given by $x \mapsto
(r_{\pi}(x))_{\pi}$ is $\bfT_{\Sigma}$.
\end{lemma}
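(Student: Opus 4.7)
The plan is to combine three ingredients already in hand: the previous proposition (that $R_{\Sigma,\Oo}$ is generated as an $\Oo$-algebra by the traces of the universal deformation), the Chebotarev density theorem (together with continuity of $\tr\rho_{\Sigma,\Oo}$), and the explicit description of $r_\pi$. First I would observe that, since the universal trace $\tr\rho_{\Sigma,\Oo}\colon G_\Sigma \to R_{\Sigma,\Oo}$ is continuous and $\{\Frob_\fq : \fq \notin \Sigma\}$ is dense in $G_\Sigma$ by Chebotarev, the closed $\Oo$-subalgebra generated by $\{\tr\rho_{\Sigma,\Oo}(\sigma) : \sigma \in G_\Sigma\}$ coincides with the closed $\Oo$-subalgebra generated by $\{\tr\rho_{\Sigma,\Oo}(\Frob_\fq) : \fq \notin \Sigma\}$. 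By the preceding proposition this whole thing equals $R_{\Sigma,\Oo}$ itself, so the image of our map $R_{\Sigma,\Oo} \to \prod_{\pi \in \Pi_\Sigma}\Oo$ is the closed $\Oo$-subalgebra of $\prod_\pi \Oo$ generated by $\{(r_\pi(\tr\rho_{\Sigma,\Oo}(\Frob_\fq)))_\pi : \fq \notin \Sigma\}$.

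Next I would compute this image element by element. By construction $r_\pi$ induces the deformation $\rho'_\pi = \rho_\pi \otimes \phi_{2,\fp}^{-1}$, so
\[
r_\pi\bigl(\tr\rho_{\Sigma,\Oo}(\Frob_\fq)\bigr) = \tr\rho'_\pi(\Frob_\fq) = \phi_{2,\fp}(\Frob_\fq)^{-1}\,\tr\rho_\pi(\Frob_\fq) = \phi_2(\varpi_\fq)^{-1}\,a_\fq(\pi),
\]
where I use Theorem~\ref{attach1} to identify $\tr\rho_\pi(\Frob_\fq)$ with $a_\fq(\pi)$. Since $T_\fq$ acts on $\pi_\tuf^{K_\tuf}$ by the scalar $a_\fq(\pi)$, the tuple $(a_\fq(\pi))_\pi$ is precisely the image of $T_\fq$ under the embedding $\bfT_\Sigma \hookrightarrow B = \prod_\pi \Oo$. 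Therefore the image of $\tr\rho_{\Sigma,\Oo}(\Frob_\fq)$ in $\prod_\pi \Oo$ is the element $\phi_2(\varpi_\fq)^{-1} T_\fq \in \bfT_\Sigma$.

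Finally I would put the two halves together. On one hand, every generator of the image sits in $\bfT_\Sigma$, so (using that $\bfT_\Sigma$ is itself a closed $\Oo$-subalgebra of $B$, being finite over the complete local ring $\Oo$) the whole image is contained in $\bfT_\Sigma$. On the other hand, $\phi_2(\varpi_\fq) \in \Oo^\times$ for each $\fq \notin \Sigma$ (after enlarging $E$ as allowed in Section~\ref{defining rho_0}), so $T_\fq = \phi_2(\varpi_\fq)\cdot\phi_2(\varpi_\fq)^{-1}T_\fq$ also lies in the image; since $\bfT_\Sigma$ is generated over $\Oo$ by $\{T_\fq : \fq \notin \Sigma\}$, the reverse inclusion follows and the image is exactly $\bfT_\Sigma$.

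I do not expect a genuine obstacle here; the only subtlety is the bookkeeping around the twist by $\phi_{2,\fp}^{-1}$ (which is why the image is $\bfT_\Sigma$ rather than something needing to be rescaled) and the harmless use of continuity plus Chebotarev to reduce from all traces to Frobenius traces. Both points are built into the setup of Section~\ref{defining rho_0}.
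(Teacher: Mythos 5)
Your argument is correct and is essentially the paper's proof, just with the Chebotarev/continuity reduction to Frobenius traces and the unit-rescaling step spelled out more explicitly than the paper's terse version.
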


\begin{proof} The $\Oo$-algebra $R_{\Sigma, \Oo}$ is generated by
the set $\{ \tr \rho_{\Sigma, \Oo}(\Frob_{\fq}) \mid \fq \not \in
\Sigma\}$. For $\fq \not \in \Sigma$, we have $$r_{\pi}(\tr
\rho_{\Sigma, \Oo}(\Frob_{\fq})) = \phi_{2,\fp}(\Frob_{\fq})^{-1}
a_{\fq}(\pi).$$ Hence the image of the map in the Lemma is the
closure of the $\Oo$-subalgebra of $\prod_{\pi \in \Pi_{\Sigma}}
\mathcal{O}$ generated by the set
$\{\phi_{2,\fp}(\Frob_{\fq})^{-1} T_{\fq} \mid \fq \not\in \Sigma
\}$ which is the same as the closure of the $\Oo$-subalgebra of
$\prod_{\pi \in \Pi_{\Sigma}} \mathcal{O}$ generated by the set
$\{T_{\fq} \mid \fq \not\in \Sigma \}$ which in turn is
$\bfT_{\Sigma}$.
 \end{proof}

By Lemma \ref{imaget} we obtain a surjective $\Oo$-algebra
homomorphism $r: R_{\Sigma, \Oo} \twoheadrightarrow
\bfT_{\Sigma}$.
% such that $r_{\pi} = r$ mod $\mP_{\pi}$, where $\mP_{\pi}$
%denotes the minimal prime of $\bfT_{\Sigma}$ corresponding to $\pi$.

\begin{thm} \label{heckedef} If $R_{\Sigma,\Oo}$ is a discrete
valuation ring and if $${\rm val}_p(L^{\rm int}(0,\phi))>0$$ then the
map $r: R_{\Sigma, \Oo} \rightarrow \bfT_{\Sigma}$ defined
above is an isomorphism.
\end{thm}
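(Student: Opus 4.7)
The plan is to leverage the DVR hypothesis on $R_{\Sigma,\Oo}$ together with the Eisenstein congruence lower bound of Theorem~\ref{Eiscong} (or~\ref{Eiscong+}) to upgrade the surjection $r: R_{\Sigma,\Oo}\twoheadrightarrow \bfT_\Sigma$ from Lemma~\ref{imaget} into an isomorphism. Because $R_{\Sigma,\Oo}$ is a DVR with maximal ideal $\fm_R$, its ideals form a chain $0\subsetneq \cdots \subsetneq \fm_R^n \subsetneq\cdots\subsetneq \fm_R\subsetneq R_{\Sigma,\Oo}$, so $\ker r$ is one of these. In particular $\bfT_\Sigma$ is either $0$, an Artinian quotient of the form $R_{\Sigma,\Oo}/\fm_R^n$ for some $n\geq 1$, or $R_{\Sigma,\Oo}$ itself. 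My plan is to exclude the first two possibilities.

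First I would rule out $\bfT_\Sigma=0$. By Theorem~\ref{Eiscong} (or~\ref{Eiscong+}) one has $\#(\bfT_\Sigma/J_\Sigma)\geq \#(\Oo/L^{\tuint}(0,\phi))$, and the hypothesis $\val_p(L^{\tuint}(0,\phi))>0$ makes the right-hand side strictly larger than $1$. Hence $\bfT_\Sigma/J_\Sigma$, and therefore $\bfT_\Sigma$, is non-trivial.

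Next I would rule out the Artinian case $\bfT_\Sigma\cong R_{\Sigma,\Oo}/\fm_R^n$ with $n\geq 1$. The key observation is that the embedding $\bfT_\Sigma\hookrightarrow \prod_{\pi\in \Pi_\Sigma}\Oo$ of Section~\ref{defining rho_0} exhibits $\bfT_\Sigma$ as a subring of a $\varpi$-torsion-free $\Oo$-module, so $\bfT_\Sigma$ is itself $\varpi$-torsion-free. On the other hand, the structure map $\Oo\to R_{\Sigma,\Oo}$ is injective (it admits a section via any of the maps $r_\pi$ of Theorem~\ref{minimal2}) and sends $\varpi$ into $\fm_R$ since $R_{\Sigma,\Oo}/\fm_R=\bfF$. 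Writing $\varpi=u\pi_R^e$ for a uniformizer $\pi_R$ of $R_{\Sigma,\Oo}$, a unit $u\in R_{\Sigma,\Oo}^\times$ and some integer $e\geq 1$, we see that $\varpi^{\lceil n/e\rceil}=0$ in any proper Artinian quotient $R_{\Sigma,\Oo}/\fm_R^n$, giving $\varpi$-torsion in $\bfT_\Sigma$ and contradicting the previous paragraph. Hence $\ker r=0$ and $r$ is an isomorphism.

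No step here is a genuine obstacle: the real work of the paper lies in securing the DVR hypothesis (via the methods of Proposition~\ref{prop5.9} and Theorem~\ref{prop5.10}), constructing the surjection $r$, and establishing the Eisenstein congruence lower bound. The argument above is a purely formal consequence of those inputs, whose principal content is the extraction of non-triviality of $\bfT_\Sigma$ from the $L$-value condition and the flatness of $\bfT_\Sigma$ over $\Oo$.
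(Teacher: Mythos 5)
Your argument is correct and fills in the details of exactly the reasoning the paper's one-line proof (citing Calegari) has in mind: $R_{\Sigma,\Oo}$ being a DVR forces the kernel of the surjection $r$ to be $0$, $\fm_R^n$ for some $n\geq 1$, or all of $R_{\Sigma,\Oo}$; the $L$-value hypothesis gives $\Pi_\Sigma\neq\emptyset$ hence $\bfT_\Sigma\neq 0$; and the embedding $\bfT_\Sigma\hookrightarrow\prod_\pi\Oo$ makes $\bfT_\Sigma$ $\varpi$-torsion-free, ruling out any Artinian quotient. This is precisely the content of the paper's remark that $\bfT_\Sigma\neq\bfT_\Sigma/\varpi^n$ for any $n$, so you are taking the same route, just with the steps written out explicitly.
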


\begin{proof}
As in \cite{Calegari06} this follows because $\bfT_\Sigma \neq
\bfT_\Sigma/\varpi^n$ for any $n$.
\end{proof}

\subsection{When is $R_{\Sigma, \Oo}$ a dvr?}\label{when is}

Set $\Psi:=\phi_{\fp}\epsilon$ and write
$\Psi_2$ for
$\Psi$ \textup{(mod $\varpi^2$)}.

\begin{prop} \label{prop5.9}
%Set $\Psi:=\phi_{\fp}\epsilon$ and write
%$\Psi_2$ for
%$\Psi$ \textup{(mod $\varpi^2$)}.
Assume that $\rho_0$ does not admit any
$\Sigma$-minimal upper-triangular deformation to $\GL_2(\Oo/\varpi^2
\Oo)$ and that $\chi_0^{-1}$ is $\Sigma$-admissible. Then $R_{\Sigma,
\Oo}$ is a discrete valuation ring. \end{prop}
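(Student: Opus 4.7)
The strategy, following \cite{BellaicheChenevier06}, is to show the surjection $r_\pi : R := R_{\Sigma, \Oo} \twoheadrightarrow \Oo$ from Theorem~\ref{minimal2} is an isomorphism, which immediately implies $R$ is a DVR. Let $J := \ker(r_\pi)$. If $J \neq 0$, Nakayama gives $J/\fm_R J \neq 0$; using the splitting $R/J^2 \cong \Oo \oplus J/J^2$ as $\Oo$-modules (coming from $r_\pi$) together with the fact that $(J/J^2)^2 = 0$ in $R/J^2$, any nonzero $\Oo$-linear map $\phi: J/J^2 \to \bfF \hookrightarrow \Oo/\varpi^2\Oo$ (where $\bfF$ embeds via $1 \mapsto \varpi$) extends to an $\Oo$-algebra homomorphism $R/J^2 \to \Oo/\varpi^2\Oo$ via $(a, j) \mapsto \bar{a} + \phi(j)$, and this map is distinct from the trivial $r_\pi \bmod \varpi^2$. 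This yields a $\Sigma$-minimal deformation of $\rho_0$ to $\GL_2(\Oo/\varpi^2\Oo)$ inequivalent to the trivial one. So it suffices to show every such deformation is equivalent to the trivial one.

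Let $\rho : G_\Sigma \to \GL_2(\Oo/\varpi^2\Oo)$ be such a deformation. Choosing a set-theoretic lift $\tilde{\rho}_0$ of $\rho_0$ to $\GL_2(\Oo/\varpi^2\Oo)$, write $\rho(\sigma) = (1 + \varpi Y(\sigma)) \tilde{\rho}_0(\sigma)$, so that $Y : G_\Sigma \to \tu{ad}(\rho_0)$ is an adjoint $1$-cocycle satisfying $Y(gh) = Y(g) + \tu{Ad}(\rho_0(g)) Y(h)$. Using the upper-triangular form $\rho_0 = \bmat 1 & b \\ 0 & \chi_0 \emat$, a direct computation shows the lower-left entry $y_{21}$ of $Y$ satisfies $y_{21}(gh) = y_{21}(g) + \chi_0(g) y_{21}(h)$, so defines a class $[y_{21}] \in H^1(G_\Sigma, \bfF(\chi_0))$; this class vanishes precisely when $\rho$ is $\GL_2(\Oo/\varpi^2\Oo)$-conjugate to an upper-triangular deformation. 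Once $[y_{21}] = 0$ is established, the hypothesis of the proposition forces the resulting upper-triangular deformation to be trivial.

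To show $[y_{21}] = 0$: Theorem~\ref{essuni} applied to the $\Sigma$-admissible character $\chi_0^{-1}$ bounds the eigenspace $V^{\chi_0}$, and hence the relevant piece of $H^1(G_\Sigma, \bfF(\chi_0))$, by one dimension. The $\Sigma$-minimality of $\rho$ imposes local conditions; in particular ordinarity at $\ov{\fp}$ combined with (\ref{split1}), which shows $\rho_0|_{D_{\ov{\fp}}}$ is already split diagonal, forces $[y_{21}]|_{D_{\ov{\fp}}} = 0$ in $H^1(D_{\ov{\fp}}, \bfF(\chi_0))$ (any local basis change upper-triangularizing $\rho|_{D_{\ov{\fp}}}$ trivializes $y_{21}$ locally as a coboundary). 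A ramification analysis dual to that of Remark~\ref{specialbasis} then shows the unique nonzero class in the $V^{\chi_0}$-eigenspace corresponds to an extension of $F(\chi_0)$ totally ramified at primes above $\ov{\fp}$, hence has nonzero restriction to $D_{\ov{\fp}}$. These two observations together force $[y_{21}] = 0$.

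The hard part will be this last step: the dual ramification analysis. One must verify that the generator of $V^{\chi_0}$ yields an extension of $F(\chi_0)$ ramified at $\ov{\fp}$, dualizing the statement of Remark~\ref{specialbasis} for $V^{\chi_0^{-1}}$. This requires redoing parts of the proof of Theorem~\ref{essuni} with $\chi_0^{-1}$ in place of $\chi_0$, carefully tracking how the asymmetry between $\fp$ and $\ov{\fp}$ (introduced by our choice of basis making (\ref{split1}) hold) propagates through the eigenspace computation involving the groups $M_{\fp_0}/T_{\fp_0}$ and the unit image $\ov{\mE}\otimes\ov{\bfF}_p$.
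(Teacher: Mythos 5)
Your plan, while citing Bellaïche--Chenevier, bypasses the essential mechanism of the paper's proof and attempts something that cannot work. The paper invokes Lemma~\ref{dvr1}: given $\Sigma$-admissibility of both $\chi_0$ and $\chi_0^{-1}$ (which supplies the one-dimensional Ext conditions) and the surjection $R_{\Sigma,\Oo}\twoheadrightarrow \bfT_\Sigma$ onto a ring of characteristic zero, $R_{\Sigma,\Oo}$ is a DVR as soon as its \emph{reducibility ideal} $I$ is maximal. A surjection $R_{\Sigma,\Oo}/I\twoheadrightarrow\bfF[x]/x^2$ or $R_{\Sigma,\Oo}/I\twoheadrightarrow\Oo/\varpi^2\Oo$ corresponds, by universality and by the defining property of $I$, to a $\Sigma$-minimal deformation whose \emph{trace} splits as a sum of two characters; under conditions \textup{(Red)} and \textup{(Sc)} such a deformation is automatically conjugate to an upper-triangular one. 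Thus only upper-triangular deformations need to be ruled out, and that is exactly what the hypothesis (for $\Oo/\varpi^2\Oo$) and Proposition~\ref{nored} (for $\bfF[x]/x^2$) do.

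Your route instead requires showing that \emph{every} $\Sigma$-minimal deformation to $\GL_2(\Oo/\varpi^2\Oo)$ is conjugate to an upper-triangular one; this is false under the very hypothesis of the proposition, so no contradiction can result. Indeed, $\rho'_\pi\bmod\varpi^2$ is a $\Sigma$-minimal deformation to $\GL_2(\Oo/\varpi^2\Oo)$ by Theorem~\ref{minimal2}, and if it were conjugate to an upper-triangular deformation it would itself violate the hypothesis. In particular, the deformation you build from $J/J^2$ need not be reducible, and the ``dual ramification analysis'' you flag as the hard remaining step is doomed: the ordinary deformation $\rho'_\pi\bmod\varpi^2$ already produces a nonzero class in the one-dimensional $\chi_0$-eigenspace whose restriction to the relevant decomposition group vanishes, the opposite of what your argument needs. (Separately, your local computation is at the wrong prime: by (\ref{split1}) the unramified character of $\rho_0|_{D_{\ov\fp}}$ sits on top, while ordinarity demands the unramified $\Psi_4$ as the bottom quotient, so the required basis change at $\ov\fp$ is antidiagonal and does not express $y_{21}|_{D_{\ov\fp}}$ as a coboundary; the coboundary relation actually comes from $D_\fp$.) You must work through the reducibility ideal as the paper does; there is no shortcut via the full deformation space to $\Oo/\varpi^2\Oo$.
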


\begin{rem} \label{remcond} The condition on the non-existence of a
$\Sigma$-minimal
upper-triangular deformation of $\rho_0$ to $\GL_2(\Oo/\varpi^2 \Oo)$
follows from the following condition on the character $\phi$ (or, which is
the
same, on the splitting field $F(\Psi_2)$ of $\Psi_2$): There does not
exist an abelian
$p$-extension $L$ of $F(\Psi_2)$,
%of exponent $p^2$
unramified
outside $\fp$
such that
$\Gal(L/F (\Psi_2))$ is isomorphic to a $\bfZ[\Gal(F(\Psi_2)/F)]$-submodule of
$(\mathcal{O}/\varpi^2\mathcal{O})(\Psi_2^{-1})$ on which
$\Gal(F(\Psi_2)/F)$ operates faithfully. Indeed, as in
the proof of Proposition \ref{nored}, the condition of $\Sigma$-minimality
forces any such
deformation to be of the form $\bmat 1 & * \\ 0 & \Psi_2\emat $ with
$*$ corresponding to an extension of $F(\Psi_2)$ unramified away from
$\fp$.  \end{rem}

%\com{K: the condition $(\#
%\bfF)^2$ does not divide... might be too weak, because our $\bfF$ is
%chosen to be ``sufficiently large''. It is possible that $\Psi$ mod $p$ is
%valued in some proper subfield of $\bfF$.}

%\com{should we introduce some symbol for $\Psi \pmod{\varpi^2}$?}

\begin{proof}[Proof of Proposition \ref{prop5.9}]
We briefly recall some general facts about Eisenstein
representations from Section 3 of \cite{Calegari06} and Section 2 of
\cite{BellaicheChenevier06}: Let $(A,\fm,k)$ be a local $p$-adically
complete ring. Let $G$ be a topological group and consider a
continuous representation $\rho:G \to {\rm GL}_2(A)$ such that
$\tr (\rho)$ mod $\fm$ is the sum of two distinct characters $\tau_i:G
\to k^{\times}, i=1,2$.

\begin{definition}
The \emph{ideal of reducibility} of $A$ is the smallest ideal $I$
of $A$ such that $\tr(\rho)$ mod $I$ is the sum of two
characters.
\end{definition}

\begin{lemma}[\cite{BellaicheChenevier06} Corollaire 2,
\cite{Calegari06} Lemma 3.4] \label{dvr1} Suppose $A$ is noetherian, that
the
ideal of reducibility is maximal, and that $${\rm dim}_k {\rm
Ext}^1_{{\rm cts}, k[G]}(\tau_1,\tau_2)={\rm dim}_k {\rm
Ext}^1_{{\rm cts}, k[G]}(\tau_2,\tau_1)=1.$$ If $A$ admits a
surjective map to a ring of characteristic $0$, then  $A$ is a
discrete valuation ring.
\end{lemma}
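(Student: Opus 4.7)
The plan is to apply the machinery of Bellaïche--Chenevier for reducible pseudo-representations to pin down the structure of $A$, showing that $\fm$ is principal and that $A$ has Krull dimension exactly $1$; being a noetherian local ring with principal maximal ideal and positive dimension, $A$ is then a regular local ring of dimension $1$, hence a DVR.

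First, I would put $\rho$ in a convenient matrix form. Because $\tau_1 \neq \tau_2$ as characters $G \to k^{\times}$, a standard Hensel/idempotent lifting argument (using that $A$ is local complete noetherian with residue field $k$, together with continuity) produces an $A$-basis in which
\[
\rho(g) = \begin{pmatrix} a(g) & b(g) \\ c(g) & d(g) \end{pmatrix},
\qquad \bar a = \tau_1,\ \bar d = \tau_2,\ b(g), c(g) \in \fm.
\]
I would then introduce the ideals $B = \langle b(g) : g \in G \rangle$ and $C = \langle c(g) : g \in G \rangle$ of $A$. Expanding $\rho(gh) = \rho(g)\rho(h)$ modulo $\fm B$ and $\fm C$ shows that $b$ and $c$ give rise to continuous $1$-cocycles, and hence produce $k$-linear injections
\[
\Hom_k(B/\fm B,\, k) \hookrightarrow \Ext^1_{\mathrm{cts},\, k[G]}(\tau_2,\tau_1),
\qquad
\Hom_k(C/\fm C,\, k) \hookrightarrow \Ext^1_{\mathrm{cts},\, k[G]}(\tau_1,\tau_2).
\]
The hypothesis that both $\Ext^1$ groups are $1$-dimensional, combined with Nakayama's lemma, forces $B$ and $C$ to be principal ideals, say $B = (b_0)$ and $C = (c_0)$.

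Next, the crucial identification: I would show that the ideal of reducibility equals $BC$. The containment $BC \subseteq I$ is direct, since modulo $BC$ one verifies that $a$ and $d$ descend to characters and $\tr(\rho) \equiv a + d \pmod{BC}$. The reverse containment $I \subseteq BC$ is the substantive input and follows from the Bellaïche--Chenevier analysis of reducible $2$-dimensional pseudo-representations (their Proposition~1.5.1 and the structure results of Section~1.5): any ideal $J$ modulo which $\tr(\rho)$ is a sum of two characters must contain the products $b(g)c(h)$ for all $g,h \in G$, hence $J \supseteq BC$. Since by hypothesis $I = \fm$, we conclude $\fm = BC = (b_0 c_0)$ is principal.

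Finally, the hypothesis that $A$ surjects onto a ring $R$ of characteristic $0$ produces a prime ideal $\ker(A \twoheadrightarrow R)$ properly contained in $\fm$ (since $A/\fm = k$ has characteristic $p$ while $R$ has characteristic $0$), so $\dim A \geq 1$. Combined with $\fm$ principal, which forces $\dim A \leq 1$ by Krull's principal ideal theorem, we get $\dim A = 1$. A noetherian local ring of Krull dimension $1$ with principal maximal ideal is a regular local ring of dimension $1$, hence a DVR, completing the proof. The main obstacle is the precise identification $I = BC$, as the reverse inclusion is exactly where the pseudo-representation framework of Bellaïche--Chenevier is needed to rule out any ``extra'' reducibility beyond what is detected by the off-diagonal ideals; the principality of $B, C$ and the concluding dimension/regularity argument are then formal.
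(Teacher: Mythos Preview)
The paper does not actually prove this lemma; it is quoted verbatim from \cite{BellaicheChenevier06} and \cite{Calegari06} and then applied. Your outline is the standard argument from those references and is essentially correct: realize $\rho$ in a basis adapted to the residual decomposition, show the off-diagonal ideals $B,C$ are principal via the $\Ext^1$ hypothesis, identify the ideal of reducibility with $BC$, and conclude that $\fm$ is principal, whence the characteristic-zero quotient forces $\dim A = 1$ and $A$ is a DVR.

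One point to correct: you have the two inclusions in the identification $I = BC$ swapped. The argument ``modulo $BC$ the functions $a,d$ are characters and $\tr\rho \equiv a+d$'' shows that $BC$ is an ideal with the reducibility property; since $I$ is by definition the \emph{smallest} such ideal, this yields $I \subseteq BC$, not $BC \subseteq I$ as you wrote. Conversely, the Bella\"iche--Chenevier input (that any ideal $J$ with the reducibility property contains all products $b(g)c(h)$) gives $BC \subseteq I$, which is the substantive direction. Your arguments are correct but attached to the wrong inclusions. This does not affect the conclusion $\fm = I = BC = (b_0 c_0)$, so the proof goes through once the labels are fixed.
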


We apply this Lemma for $G=G_{\Sigma}$, $A=R_{\Sigma, \Oo}$, $\tau_1=1$,
and $\tau_2=\chi_0$. $\Sigma$-admissibility of both $\chi_0$ and its
inverse guarantees that the dimension condition in Lemma \ref{dvr1} is
satisfied.
Moreover, since $R_{\Sigma, \Oo} \rightarrow \bfT_{\Sigma}$ is surjective
and
$\bfT_{\Sigma}$ is a ring of characteristic zero,
%which we
%assume to be non-trivial,
we infer that $R_{\Sigma, \Oo}$ is a
discrete valuation ring whenever the ideal of reducibility $I$ of
$R_{\Sigma, \Oo}$ is maximal. This is the case if and only if there
does not exist a surjection $R_{\Sigma, \Oo}/I \twoheadrightarrow
\bfF[x]/x^2$ or $R_{\Sigma, \Oo}/I \twoheadrightarrow \Oo/\varpi^2
\Oo$, or, by the universality of $R_{\Sigma, \Oo}$ if
$\rho_0$ does not admit any non-trivial $\Sigma$-minimal
deformations of $\rho_0$ to ${\rm GL}_2(\bfF[x]/x^2)$ or ${\rm
GL}_2(\Oo/\varpi^2 \Oo)$ that are upper-triangular.
The latter cannot occur by assumption and the former by
Proposition \ref{nored}.
%By Proposition
%\ref{nored} the first does not occur. On the other hand, let $\rho$
%be a $\Sigma$-minimal, upper-triangular deformation to ${\rm
%GL}_2(\Oo/\varpi^2 \Oo)$. As in
%Proposition \ref{nored} such a deformation must be of the form
%$\begin{pmatrix}1&*\\0&\Psi \mod{\varpi^2}\end{pmatrix}$.
%The existence of such $\rho$ however, would contradict the
%admissibility of $\phi$.
\end{proof}

%\begin{rem} \label{better2} We want to point out that for Proposition 
%\ref{prop5.9} the 
%ring $\Oo$
%needs to contain only the integer ring of the finite extension of 
%$\bfQ_p$
%containing the values of the finite parts of a particular choice of the
%Hecke characters $\phi_i$, $i=1,2$ and $L^{\tuint}(0,\phi)$. This is 
%because under 
%this
%assumption the Eisenstein ideal bound (Theorems \ref{Eiscong} and 
%\ref{Eiscong+}) holds. If 
%$\Oo'$
%is any finite extension of $\Oo$ then \cite{DDT}, Lemma 2.38 and 
%\cite{Serre79}
%I.6 show that $R_{\Sigma,\Oo'}=R_{\Sigma,\Oo}\otimes \Oo'$ is also a 
%dvr. \end{rem}

Note that $\Gal(F(\Psi)/F) \cong \Gamma \times \Delta$ with $\Gamma \cong
\bfZ_p$ and $\Delta$ a finite group. 

%Let $\Oo'$ be the ring of integers 
%in 
%the smallest finite extension of $\bfQ_p$ containing the values of 
%the finite parts $\phi_1$ and $\phi_2$ and 
%$L^{\tuint}(0,\phi_1/\phi_2)$.

\begin{thm} \label{prop5.10} 
%Let $\Oo'$ be the ring of integers in 
%the smallest finite extension of $\bfQ_p$ containing the values of 
%the finite parts $\phi_1$ and $\phi_2$ and 
%$L^{\tuint}(0,\phi_1/\phi_2)$. 
Assume 
$p \nmid \# \Delta$ and that $\chi_0^{-1}$ is
$\Sigma$-admissible. If $$\# 
(\Oo/L^{\tuint}(0,\phi))=p^{[\Oo:\bfZ_p]},$$
then $\rho_0$ does
not admit any
$\Sigma$-minimal upper-triangular deformation to $\GL_2(\Oo/\varpi^2
\Oo)$. In particular $R_{\Sigma,
\Oo}$ is a discrete valuation ring. \end{thm}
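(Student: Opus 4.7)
By Proposition \ref{prop5.9} and Remark \ref{remcond}, it suffices to show that under our hypotheses there is no abelian $p$-extension $L/F(\Psi_2)$ unramified outside $\fp$ such that $\Gal(L/F(\Psi_2))$ embeds as a $\bfZ[\Gal(F(\Psi_2)/F)]$-submodule of $M:=(\Oo/\varpi^2\Oo)(\Psi_2^{-1})$ on which $\Gal(F(\Psi_2)/F)$ acts \emph{faithfully}. The faithfulness requirement is the key: any $\Oo$-submodule of $M$ annihilated by $\varpi$ is killed by $\Psi-1\bmod\varpi$ on appropriate elements, and so cannot give a faithful action of $\Psi_2$. Hence the existence of such an $L$ is equivalent to the existence of an $\Oo$-line copy of $M$ itself inside the appropriate eigenspace of the Galois group of the maximal abelian $p$-extension of $F(\Psi_2)$ unramified outside $\fp$.

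The plan is therefore to bound the $\Psi_2^{-1}$-isotypic part of this ray class group of $F(\Psi_2)$ by the $p$-adic valuation of $L^{\tuint}(0,\phi)$. The natural framework is Iwasawa theory over the $\bfZ_p$-extension sitting inside $F(\Psi)/F$: writing $\Gal(F(\Psi)/F)\cong \Gamma\times\Delta$ with $\Gamma\cong\bfZ_p$ and $p\nmid\#\Delta$, the $\Delta$-isotypic decomposition of the relevant Iwasawa module is clean, and we may isolate the $\Psi^{-1}\!\!\mod\Delta$-component. By the Iwasawa main conjecture for imaginary quadratic fields (Rubin), the characteristic ideal of this component is generated by a $p$-adic $L$-function interpolating the values $L^{\tuint}(0,\phi\cdot\eta)$ for finite-order twists $\eta$ of $\Gamma$. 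Descending from the tower back to $F(\Psi_2)$ (controlled by the $p\nmid\#\Delta$ assumption, which trivializes the relevant cohomology) identifies the $\Psi_2^{-1}$-part of the ray class group with a quotient whose $\Oo$-length is bounded above by $\val_p(L^{\tuint}(0,\phi))\cdot[\Oo:\bfZ_p]$.

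Under the hypothesis $\#(\Oo/L^{\tuint}(0,\phi))=p^{[\Oo:\bfZ_p]}$, this upper bound equals $[\Oo:\bfZ_p]$, i.e., the length is at most $1$. Combined with the lower bound coming from the congruence produced by Theorem~\ref{Eiscong} (or \ref{Eiscong+})--- which already forces a nontrivial deformation to $\GL_2(\bfF)$ and hence a nonzero $\Psi_2^{-1}$-eigenspace modulo $\varpi$--- the length is exactly $1$. A module of $\Oo$-length $1$ inside $M$ is annihilated by $\varpi$, so $\Psi_2$ acts on it through its reduction $\chi_0$ and the action of $\Gal(F(\Psi_2)/F)$ cannot be faithful. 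No extension $L$ of the required type exists, and $R_{\Sigma,\Oo}$ is a DVR by Proposition \ref{prop5.9}.

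The main obstacle is step two: making precise the translation between the faithful submodule of $(\Oo/\varpi^2)(\Psi_2^{-1})$ and the appropriate eigenspace of an $\Oo$-valued class group, then invoking the correct form of the anticyclotomic main conjecture (respecting the $\Sigma$-minimal ramification condition at primes in $\Sigma\setminus\{\fp,\ov\fp\}$) to get the sharp numerical bound by $\val_p(L^{\tuint}(0,\phi))$. Once this bound is in place, the comparison with the $L$-value hypothesis is immediate.
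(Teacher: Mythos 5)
Your sketch follows the same strategy as the paper: reduce to the non-existence of a faithful $\Gal(F(\Psi_2)/F)$-submodule of $(\Oo/\varpi^2\Oo)(\Psi_2^{-1})$ inside the ray class group, bound the relevant eigenspace by Iwasawa theory, and use the $L$-value hypothesis together with the already-present extension $F(\rho_0)$ to force equality. Two points, however, need more care than the sketch supplies.

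First, the passage ``by the Iwasawa main conjecture\ldots the characteristic ideal of this component is generated by a $p$-adic $L$-function interpolating $L^{\tuint}(0,\phi\cdot\eta)$'' hides the real technical content. The module to be bounded here is $X_\infty=\Gal(M(F(\Psi))/F(\Psi))$ with $M(F(\Psi))$ unramified away from $\fp$ \emph{only} (this one-sided ramification condition is forced by $\Sigma$-minimality and ordinarity at $\ov\fp$). The main conjecture does not directly express $\#\bigl((X_\infty\otimes\Oo)^{\chi_0^{-1}}/(\gamma-\Psi^{-1}(\gamma))\bigr)$ in terms of $L(0,\phi)$. The paper gets there by a chain: identify $\Hom(X_\infty,(E/\Oo)(\Psi^{-1}))^{\Gal(F(\Psi)/F)}$ with Greenberg's strict Selmer group ${\rm S}^{\rm str}_{A_{\Psi^{-1}}}(F)$ (via \cite{Guo93a}), use the anticyclotomic symmetry to replace $\Psi^{-1}$ by $\Psi$, apply the Selmer-group duality of \cite{Guo93b} to pass to the \emph{dual} character $\Psi^{-1}\epsilon$, and only then apply the Wiles main-conjecture bound. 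Without this duality step your bound does not land on $L(0,\phi)$; a direct main-conjecture appeal gives information about the ``wrong'' character.

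Second, the final accounting is off. You assert that the hypothesis $\#(\Oo/L^{\tuint}(0,\phi))=p^{[\Oo:\bfZ_p]}$ gives ``$\Oo$-length at most $1$'', but $[\Oo:\bfZ_p]=e f$ while the hypothesis only pins down $\val_\varpi(L^{\tuint}(0,\phi))=e$; if $\Oo/\bfZ_p$ is ramified the length bound you would obtain is $e$, not $1$, and $e\ge 2$ no longer rules out a faithful copy of $\Oo/\varpi^2\Oo$ by a pure length count. The paper avoids this entirely: it shows the two inequalities (Lemmas~\ref{lem1},~\ref{lem2}) together with the equality $\#(\Gal(F(\Psi_2)F(\rho_0)/F(\Psi_2))\otimes\Oo)^{\chi_0^{-1}}=p^{[\Oo:\bfZ_p]}$ force $M(F(\Psi_2))_\Psi = F(\Psi_2)F(\rho_0)$. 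Any putative faithful $L$ would then be a subfield of $F(\Psi_2)F(\rho_0)$, yet $\Gal(F(\Psi_2)/F)$ acts on $\Gal(F(\Psi_2)F(\rho_0)/F(\Psi_2))$ through the proper quotient $\Gal(F(\chi_0)/F)$, so no quotient can be faithful --- a conclusion that is independent of the ramification of $\Oo$. You should replace the length-$1$ argument with this comparison of fields.
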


\begin{rem} \label{OtoO'} Let $\Oo'$ be the ring of integers
in
any finite extension of $\bfQ_p$ containing 
%the values of
%the finite parts $\phi_1$ and $\phi_2$ and
%$L^{\tuint}(0,\phi_1/\phi_2)$.
$L^{\tuint}(0,\phi)$.
Note 
that 
the $L$-value condition in Theorem \ref{prop5.10} is 
equivalent to $\#(\Oo'/L^{\tuint}(0,\phi))=p^{[\Oo':\bfZ_p]}$. 
\end{rem}

\begin{proof} 
%Let $\Oo'$ be as in Remark \ref{OtoO'}. Then we can assume that 
%$\#(\Oo'/L^{\tuint}(0,\phi))=p^{[\Oo':\bfZ_p]}$. We will show that 
%$R_{\Sigma,\Oo'}$ is a dvr which implies (combining 
%Lemma 2.38 in \cite{DDT} and 
%section I.6 of
%\cite{Serre79}) that $R_{\Sigma, \Oo}$ 
%is also a dvr. 
Write $X_{\infty}$ for $\Gal(M(F(\Psi))/F(\Psi))$ with
$M(F(\Psi))$ the
maximal abelian pro-$p$-extension of $F(\Psi)$ unramified away from
the primes lying over $\fp$ and $(X_{\infty}\otimes \Oo)^{\chi_0^{-1}}$
the
$\chi_0^{-1}$-part of $X_{\infty}\otimes \Oo$. Moreover, write
$M(F(\Psi_2))_{\Psi}$ for the maximal abelian pro-$p$-extension of
$F(\Psi_2)$
unramified away
from $\fp$ on which $\Gal(F(\Psi_2)/F)$ acts via $\Psi^{-1}$.
We will use the following two lemmas.

\begin{lemma} \label{lem1}
We have $$\#
((X_{\infty}\otimes
\Oo)^{\chi_0^{-1}}/(\gamma-\Psi^{-1}(\gamma))(X_{\infty}\otimes
\Oo)^{\chi_0^{-1}})
\leq \#(\mathcal{O}/L^{\rm int}(0,\phi)).$$
\end{lemma}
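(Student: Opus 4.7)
The plan is to recognize the left-hand side as the value, at the character $\Psi^{-1}$, of the characteristic ideal of an Iwasawa module over $\Oo[[\Gamma]]$, and then to bound this value by an appropriate $p$-adic $L$-function using a one-sided divisibility from the Iwasawa main conjecture for imaginary quadratic fields (due to Rubin, with antecedents of Coates--Wiles and Yager).

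First, I would set up the Iwasawa-theoretic framework. Since $p \nmid \#\Delta$, the idempotents attached to characters of $\Delta$ split $X_\infty \otimes_{\bfZ_p} \Oo$ as an $\Oo[[\Gamma \times \Delta]]$-module into a product of $\Oo[[\Gamma]]$-modules indexed by characters $\psi$ of $\Delta$ with values in $\Oo^\times$, so that $Y := (X_\infty \otimes \Oo)^{\chi_0^{-1}}$ is a well-defined, finitely generated $\Oo[[\Gamma]]$-module. The element $\gamma - \Psi^{-1}(\gamma) \in \Oo[[\Gamma]]$ generates the kernel of the $\Oo$-algebra map $\Oo[[\Gamma]] \to \Oo$ sending $\gamma \mapsto \Psi^{-1}(\gamma)$, so the quotient in the statement is the specialization $Y \otimes_{\Oo[[\Gamma]]} \Oo$ where $\Oo$ is viewed as an $\Oo[[\Gamma]]$-module via $\Psi^{-1}$.

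Next, I would invoke the structure theorem for finitely generated torsion modules over $\Oo[[\Gamma]] \cong \Oo[[T]]$ (identifying $\gamma$ with $1+T$). Assuming $Y$ is a torsion $\Oo[[\Gamma]]$-module with characteristic power series $F_Y(T)$, a standard length computation gives that the length of $Y/(\gamma - \Psi^{-1}(\gamma))Y$ over $\Oo$ is at most $\val_p(F_Y(\Psi^{-1}(\gamma)-1)) \cdot [\Oo : \bfZ_p]$, with equality when $Y$ has no nonzero pseudonull submodules and $F_Y$ does not vanish at the specialization point. I would then apply the divisibility
\[
(F_Y(T)) \mid (\mathcal{L}_\phi(T))
\]
coming from Rubin's proof of the main conjecture for imaginary quadratic fields, where $\mathcal{L}_\phi$ is the relevant $p$-adic $L$-function attached to the branch of characters through $\chi_0^{-1}$. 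Specializing at $\Psi^{-1}$ and using the interpolation property
\[
\mathcal{L}_\phi(\Psi^{-1}(\gamma)-1) \doteq L^{\tuint}(0,\phi)
\]
(up to a $p$-adic unit, under our assumption that the Euler factors at primes in $S_\phi$ remain units, which is guaranteed by $\Sigma$-admissibility of $\chi_0^{-1}$ and the conductor hypothesis on $\phi_1,\phi_2$) would yield the desired inequality.

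The main obstacle I anticipate is the careful bookkeeping at the specialization point: one must rule out a contribution from a pseudonull submodule of $Y$ supported at $(\gamma - \Psi^{-1}(\gamma))$, and one must check that the removed Euler factors in the interpolation formula for $\mathcal{L}_\phi$ are $p$-adic units so that they do not disturb the inequality. The first issue is handled by a standard descent argument showing that any such pseudonull contribution would force an unramified abelian $p$-extension incompatible with condition (5) of $\Sigma$-admissibility for $\chi_0^{-1}$; the second is precisely what the conditions on $S_\phi$ at the end of Section \ref{s4.1} are designed to ensure. Once these two points are dispatched, the inequality follows directly.
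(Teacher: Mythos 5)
Your proposal takes a genuinely different route from the paper. The paper never passes through a direct specialization of a characteristic ideal combined with an interpolation formula at $\Psi^{-1}$. Instead, it first identifies the left-hand side with a strict Selmer group $S^{\rm str}_{A_{\Psi^{-1}}}(F)$ (via results of Guo extending Greenberg), then uses complex conjugation to replace $\Psi^{-1}$ by $\Psi$, then applies Guo's Selmer-group duality $S^{\rm str}_{A_\Psi}(F) \cong S^{\rm str}_{A_{\Psi^{-1}\epsilon}}(F)$ (valid because both groups are finite, which follows from Proposition~\ref{torsion1}), and only then invokes the Main Conjecture as packaged by Wiles (p.~532 of \cite{Wiles95}) to bound the size of $\Hom(X_\infty, (E/\Oo)(\Psi^{-1}\epsilon))^{\Gal(F(\Psi)/F)}$ by $\#(\Oo/L^{\rm int}(0,\phi))$. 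The crucial point is that the $L$-value $L^{\rm int}(0,\phi)$ is attached by the Main Conjecture to the specialization at $\Psi^{-1}\epsilon$, not at $\Psi^{-1}$; these lie in different $\Delta$-isotypic components and at different points of $\Gamma$.

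This is exactly where your argument has a gap. You claim the interpolation identity $\mathcal{L}_\phi(\Psi^{-1}(\gamma)-1) \doteq L^{\tuint}(0,\phi)$ without justification, but the standard Katz-type interpolation for the $p$-adic $L$-function controlling the $\fp$-ramified Iwasawa module produces, at the point $\Psi^{-1}$, an $L$-value of a twist of $\phi$ different from $L(0,\phi)$ (the character $\Psi^{-1}=\phi_{\fp}^{-1}\epsilon^{-1}$ corresponds to the Hecke character $\phi^{-1}\cdot N^{-1}$, which lands you at a different critical value). To convert this into $L^{\rm int}(0,\phi)$ one needs the functional equation of the $p$-adic $L$-function, which on the Iwasawa-module side is precisely the Greenberg/Guo duality the paper invokes. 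So your route is not shorter; it silently absorbs the duality step into an unproved interpolation identity. Your two flagged anticipated difficulties (pseudonull contributions, Euler-factor units) are also genuine and only sketched, but the interpolation claim is the real missing step: either prove it directly (effectively re-proving the functional equation/duality) or follow the paper's chain of reductions through Selmer groups.
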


\begin{lemma} \label{lem2} We have
\begin{multline} \#(\Gal(M(F(\Psi_2))_{\Psi}/F(\Psi_2))\otimes
\Oo)^{\chi_0^{-1}}
\leq\\
\leq \# ((X_{\infty}\otimes
\Oo)^{\chi_0^{-1}}/(\gamma-\Psi^{-1}(\gamma))(X_{\infty}\otimes
\Oo)^{\chi_0^{-1}}).\end{multline}
\end{lemma}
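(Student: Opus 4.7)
The plan is an Iwasawa-theoretic descent comparing the situation at the level $F(\Psi_2)$ with that at $F(\Psi)$. Set $K = F(\Psi)$, $K_0 = F(\Psi_2)$, and $L = M(F(\Psi_2))_{\Psi}$. I would first show that the compositum $LK$ embeds into $M(F(\Psi))$, which by Galois theory produces a surjection from $X_\infty$ onto $\Gal(L/K_0)$; then after passing to the $\chi_0^{-1}$-isotypic component for the $\Delta$-action and tensoring with $\Oo$, the element $\gamma - \Psi^{-1}(\gamma)$ acts trivially on the target, so the surjection factors through the stated quotient.

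The first step is to verify $L \cap K = K_0$, which by standard Galois theory gives an isomorphism $\Gal(LK/K) \xrightarrow{\sim} \Gal(L/K_0)$. Any intermediate $M/K_0$ with $M\subset L\cap K$ would simultaneously be a quotient of $\Gal(L/K_0)$, on which $\Gal(K_0/F)$ acts via $\Psi_2^{-1}$, and a quotient of the abelian group $\Gal(K/K_0)$, on which $\Gal(K_0/F)$ (as a quotient of the abelian group $\Gal(K/F)$) acts trivially; since $\Psi_2 \neq 1$, this forces $M = K_0$. Next one checks $LK \subset M(F(\Psi))$: the extension $LK/K$ is abelian pro-$p$ because $\Gal(LK/K) \hookrightarrow \Gal(L/K_0)$, and the required unramifiedness away from primes above $\fp$ only needs to be checked at primes above $\ov{\fp}$. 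For a prime $\fP$ of $LK$ over $\ov{\fp}$, the inclusion $\Gal(LK/L) \hookrightarrow \Gal(K/K_0)$ gives $e(\fP/\fP\cap L) \mid e(\fP\cap K/\fP\cap K_0)$, and combined with the unramifiedness of $L/K_0$ at $\ov{\fp}$ and multiplicativity of ramification indices in the two towers $LK \supset L \supset K_0$ and $LK \supset K \supset K_0$, one gets $e(\fP/\fP\cap K) = 1$.

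These facts combine to produce a $\Gal(K/F)$-equivariant surjection $X_\infty \twoheadrightarrow \Gal(L/K_0)$. Tensoring with $\Oo$ and passing to the $\chi_0^{-1}$-eigencomponent for $\Delta$ preserves surjectivity, since $\Gal(L/K_0) \otimes \Oo$ lies entirely in the $\chi_0^{-1}$-part: $\Delta$ acts on it via $\Psi_2^{-1}|_\Delta = \Psi^{-1}|_\Delta$, which is the Teichm\"uller lift of $\chi_0^{-1}|_\Delta$. Finally, $\gamma - \Psi^{-1}(\gamma)$ annihilates the target because $\gamma\in\Gamma$ acts on $\Gal(L/K_0)\otimes\Oo$ as $\Psi_2^{-1}(\gamma)$, which differs from $\Psi^{-1}(\gamma)$ by an element of $\varpi^2\Oo$, and the target is $\varpi^2$-torsion. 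The resulting factorization
\[(X_\infty\otimes\Oo)^{\chi_0^{-1}}/(\gamma-\Psi^{-1}(\gamma))(X_\infty\otimes\Oo)^{\chi_0^{-1}} \twoheadrightarrow \Gal(L/K_0)\otimes\Oo\]
yields the claimed cardinality inequality.

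The main obstacle I anticipate is the ramification analysis at $\ov{\fp}$; it is a standard but easy-to-miscompute tower comparison, and the two key inputs (unramifiedness of $L/K_0$ at $\ov{\fp}$ and the inclusion $\Gal(LK/L)\hookrightarrow\Gal(K/K_0)$) are both automatic from the definitions of $L$ and $K$.
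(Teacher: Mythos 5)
Your proof is correct and follows essentially the same route as the paper's: both obtain the inequality by producing a $\Gal(F(\Psi)/F)$-equivariant surjection from $X_\infty$ onto $\Gal(M(F(\Psi_2))_\Psi/F(\Psi_2))$ via restriction (your compositum argument), passing to $\chi_0^{-1}$-components, and observing that $\gamma - \Psi^{-1}(\gamma)$ annihilates the target. The paper states these steps very tersely (``the restriction provides a surjective homomorphism'' and ``clearly factors through''), whereas you supply the intersection $L\cap K = K_0$, the ramification check at $\ov{\fp}$, and the $\varpi^2$-torsion observation that the paper leaves implicit.
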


%\begin{lemma} \label{lem3} We have $$\#
%(\Gal(F(\Psi_2)F(\rho_0)/F(\Psi_2))\otimes
%\Oo)^{\chi_0^{-1}} =  p^{[\Oo:\bfZ_p]}.$$ \end{lemma}

We first show how Theorem \ref{prop5.10} follows from
these lemmas.
Suppose that $L$ as in Remark \ref{remcond} existed. Then one would
have
$L
\subset M(F(\Psi_2))_{\Psi}$. One also
has
$F(\Psi_2)F(\rho_0) \subset M(F(\Psi_2))_{\Psi}$, hence \begin{multline} 
\label{ine} \#
(\Gal(F(\Psi_2)F(\rho_0)/F(\Psi_2))\otimes
\Oo)^{\chi_0^{-1}} \leq \\
\leq \#(\Gal(M(F(\Psi_2))_{\Psi}/F(\Psi_2))\otimes
\Oo)^{\chi_0^{-1}},\end{multline}
 but $F(\Psi_2)F(\rho_0) \neq L$, as $\Gal(F(\Psi_2)/F)$ does not
act faithfully on the group $\Gal(F(\Psi_2)F(\rho_0)/F(\Psi_2))$. It
is easy to see that the quantity on the left-hand side of
(\ref{ine}) is $p^{[\Oo:\bfZ_p]}$. 
Hence, if the conditions of Theorem
\ref{prop5.10} are satisfied, the inequalities in Lemmas \ref{lem1}
and \ref{lem2} become equalities and this easily implies that
$F(\Psi_2)F(\rho_0)= M(F(\Psi_2))_{\Psi}$. Thus $L$ cannot exist.
\end{proof}

\begin{proof}[Proof of Lemma \ref{lem1}]
It follows from
Proposition
\ref{torsion1} (see section \ref{A
reducible
deformation of rho_0})
%the Main Conjecture of Iwasawa theory that the module on
that the module ${\rm
Hom}(X_{\infty},(E/\mathcal{O})(\Psi^{-1}))^{{\rm Gal}(F(\Psi)/F)}$ is finite.
% If it was infinite this would imply
%the existence of a non-torsion $\bfZ_p$-extension of $F(\Psi)$ on
%wich ${\rm Gal}(F(\Psi)/F)$ acts by $\Psi^{-1}$. Such an extension
%does not exist by \cite{Rubin91} Theorem 5.3(iii) and the fact that
%$L(0,\phi)\neq 0$ using the anticyclotomic Main Conjecture for
%imaginary quadratic fields (see \cite{Tilouine89}, \cite{Rubin91},
%\cite{MazurTilouine90}).
 For any Galois character $\tau:G_F \to \Oo^{\times}$ put
$A_{\tau}=E/\mathcal{O}(\tau)$. By \cite{Guo93a}
Proposition 2.2(i) and Proposition 2.3 $${\rm
Hom}(X_{\infty},(E/\mathcal{O})(\Psi^{-1}))^{{\rm Gal}(F(\Psi)/F)}
\cong {\rm S}^{\rm str}_{A_{\Psi^{-1}}}(F),$$ where ${\rm S}^{\rm
str}_{A_{\Psi^{-1}}}(F) \subset H^1({\rm G_F},A_{\Psi^{-1}})$
denotes the strict Selmer group defined by Greenberg (see \cite{Guo93a}, 
section 1 for definition). Note that the
class number restriction in \cite{Guo93a} is not required for these
results.

It is clear that ${\rm S}^{\rm str}_{A_{\Psi^{-1}}}(F)\cong {\rm
S}^{\rm str}_{A_{(\Psi^{-1})^c}}(F)={\rm S}^{\rm
str}_{A_{\Psi}}(F)$. The duality result of \cite{Guo93b} Theorem 2
implies an isomorphism $${\rm S}^{\rm str}_{A_{\Psi}}(F) \cong {\rm
S}^{\rm str}_{A_{\Psi^{-1} \epsilon}}(F)$$ if both Selmer groups are
finite. By the observation at the beginning of the proof we know
that ${\rm S}^{\rm str}_{A_{\Psi}}(F)$ is finite. For the Selmer
group of the dual character the arguments of the proof of
Proposition 2.2 of \cite{Guo93a} imply that
$${\rm S}^{\rm str}_{A_{\Psi^{-1} \epsilon}}(F) \hookrightarrow {\rm
Hom}(X_{\infty},(E/\mathcal{O})(\Psi^{-1} \epsilon))^{{\rm
Gal}(F(\Psi)/F)}.$$

By applying the Main Conjecture of Iwasawa theory Wiles
\cite{Wiles95} p. 532 proves that
$$\# {\rm Hom}(X_{\infty},(E/\mathcal{O})(\Psi^{-1} \epsilon))^{{\rm
Gal}(F(\Psi)/F)} \leq \#(\mathcal{O}/L^{\rm int}(0,\phi)).$$ (For
similar results towards the Bloch-Kato conjecture see also
\cite{Guo93a} who treats imaginary quadratic fields of class number
one but Hecke characters of general infinity types.) Finally, it is easy
to see that \begin{multline}\#{\rm
Hom}(X_{\infty},(E/\mathcal{O})(\Psi^{-1}))^{{\rm
Gal}(F(\Psi)/F)}=\\
=\#(X_{\infty}\otimes
\Oo)^{\chi_0^{-1}}/(\gamma-\Psi^{-1}(\gamma))(X_{\infty}\otimes
\Oo)^{\chi_0^{-1}}.\end{multline}
\end{proof}

\begin{proof}[Proof of Lemma \ref{lem2}] The restriction
provides a surjective
$\Oo$-linear homomorphism
$$(X_{\infty}\otimes \Oo)^{\chi_0^{-1}} \twoheadrightarrow
(\Gal(M(F(\Psi_2))/F(\Psi_2))\otimes \Oo)^{\chi_0^{-1}}.$$ Since
$\Gal(F(\Psi_2)/F)$ acts on $\Gal(M(F(\Psi_2))_{\Psi}/F(\Psi_2))$
via $\Psi^{-1}$ the composite \begin{multline}(X_{\infty}\otimes
\Oo)^{\chi_0^{-1}} \twoheadrightarrow
(\Gal(M(F(\Psi_2))/F(\Psi_2)\otimes \Oo)^{\chi_0^{-1}}
\twoheadrightarrow \\
\twoheadrightarrow (\Gal(M(F(\Psi_2))_{\Psi}/F(\Psi_2)) \otimes
\Oo)^{\chi_0^{-1}}\end{multline} clearly factors through
$$(X_{\infty}\otimes
\Oo)^{\chi_0^{-1}}/(\gamma-\Psi^{-1}(\gamma))(X_{\infty}\otimes
\Oo)^{\chi_0^{-1}}.$$ \end{proof}

\subsection{Modularity theorem}

In this section we state a modularity theorem which is a consequence of
the results of the previous sections.
%The results of this section imply the following modularity theorem.
To
make
its statement self-contained, we explicitly include all the assumptions
we have made
so far.

\begin{thm} \label{cor5.8} Let
$\phi_1$, $\phi_2$ be Hecke characters of $F$ with split conductors and of
infinity type $z$ and $z^{-1}$ respectively such that
$\phi:=\phi_1/\phi_2$ is unramified. Assume that the conductor
$\mathfrak{M}_1$ of $\phi_1$ is coprime to $(p)$ and that $p
\nmid \#(\Oo_F/\fM_1)^{\times}$. Moreover, assume that
$\val_p(L^{\tuint}(0, \phi))>0$.

Let $\rho: G_{\Sigma} \rightarrow \GL_2(E)$ be a continuous
irreducible representation that is ordinary at all places $\fq \mid
p$ (in the sense of Theorem \ref{Urbanordinary}). Suppose
$\ov{\rho}^{\tuss} \cong \chi_1
\oplus \chi_2$ with
$\chi_1=\overline{\phi_{1,\fp} \epsilon}$,
$\chi_2=\overline{\phi_{2,\fp}}$. Set $\chi_0: = \chi_1
\chi_2^{-1}$. If all of the following conditions are
satisfied:

\begin{enumerate}
  \item $\Sigma \supset \{{\fq} \mid p d_F \mathfrak{M}_1 \mathfrak{M}_1^c
  \}$,
 % \item $\phi$ is admissible,
  \item the representation $\ov{\rho}\otimes \chi_2^{-1}$ admits no
upper-triangular $\Sigma$-minimal deformation to $\GL_2(\Oo/\varpi^2
\Oo)$,
  \item $\chi_0$ and $\chi_0^{-1}$ are $\Sigma$-admissible
  \item ${\rm det}(\rho)=\phi_1 \phi_2 \epsilon$,
  \item $\rho \otimes \phi_{2,\fp}^{-1}$ is $\Sigma$-minimal,
\end{enumerate}
then $\rho$ is modular in the sense of Definition
\ref{modular1}.
\end{thm}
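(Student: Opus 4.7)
The plan is to twist $\rho$ into a deformation of the fixed residual representation $\rho_0$ and then invoke the $R = T$ theorem from Theorem \ref{heckedef}. First I would set $\rho' := \rho \otimes \phi_{2,\fp}^{-1}$. A direct computation using condition (4) gives $\det \rho' = \phi_{1,\fp}\phi_{2,\fp}^{-1}\epsilon = \phi_{\fp}\epsilon$, and the residual semisimplification is $(\ov{\rho'})^{\tuss} \cong 1 \oplus \chi_0$. Because $\chi_0$ is $\Sigma$-admissible (condition (3)) it is ramified at $\fp$, hence $\chi_0 \neq 1$; combined with irreducibility of $\rho$, I can apply the Ribet-style lattice argument used at the start of Section \ref{defining rho_0} to conjugate $\rho'$ (after possibly enlarging $E$) into $\GL_2(\Oo)$ so that $\ov{\rho'}$ is a \emph{non-split} extension of $\chi_0$ by $1$. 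This ensures that $\ov{\rho'}$ satisfies conditions (Red) and (Sc); Corollary \ref{essuni2}, whose hypothesis is exactly $\Sigma$-admissibility of $\chi_0$, then identifies $\ov{\rho'}$ with $\rho_0$ up to isomorphism.

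Next I would verify that $\rho'$ itself is a $\Sigma$-minimal ordinary $\Oo$-deformation of $\rho_0$. Ordinarity at $\fp$ and $\ov{\fp}$ in the sense of Definition \ref{ordinary} follows from the ordinarity hypothesis on $\rho$ together with the determinant condition (4), while $\Sigma$-minimality is exactly condition (5). By the universal property of $R_{\Sigma,\Oo}$ this produces a unique $\Oo$-algebra homomorphism $r_\rho : R_{\Sigma,\Oo} \to \Oo$ such that $r_\rho \circ \rho_{\Sigma,\Oo} \cong \rho'$.

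At this point I would apply the structure theory of $R_{\Sigma,\Oo}$. Conditions (2) and (3) are precisely the hypotheses of Proposition \ref{prop5.9}, so $R_{\Sigma,\Oo}$ is a discrete valuation ring. Combined with $\val_p(L^{\tuint}(0,\phi)) > 0$, Theorem \ref{heckedef} upgrades the surjection $r$ to an isomorphism $R_{\Sigma,\Oo} \xrightarrow{\sim} \bfT_{\Sigma}$, and Lemma \ref{imaget} realizes this isomorphism explicitly as $x \mapsto (r_\pi(x))_{\pi \in \Pi_\Sigma}$ inside $\prod_{\pi} \Oo$. Therefore $r_\rho$ factors through $\bfT_{\Sigma}$ and, being an $\Oo$-algebra homomorphism to $\Oo$ from a finite $\Oo$-algebra embedded diagonally in $\prod_\pi \Oo$, must coincide with the projection to some component $r_{\pi_0}$ for a specific $\pi_0 \in \Pi_{\Sigma}$. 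Consequently $\rho' \cong \rho'_{\pi_0} = \rho_{\pi_0} \otimes \phi_{2,\fp}^{-1}$; untwisting yields $\rho \cong \rho_{\pi_0}$, proving modularity in the sense of Definition \ref{modular1}.

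The main obstacle is the first step: choosing the integral lattice so that $\ov{\rho'}$ lands in exactly the form required by Corollary \ref{essuni2}. This relies crucially on $\chi_0 \neq 1$ (supplied by admissibility) and on the irreducibility of $\rho'$ (inherited from $\rho$); the scalar-centralizer condition then follows automatically from non-splitness together with $\chi_0 \neq 1$. Beyond that, the identification of $r_\rho$ with a single $r_{\pi_0}$ needs only the finiteness of $\bfT_\Sigma$ over $\Oo$ and its embedding into $\prod_{\pi} \Oo$, so it is a routine bookkeeping step once $R = T$ is in hand.
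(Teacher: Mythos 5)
Your proposal is correct and reconstructs exactly the argument the paper intends: the paper states Theorem~\ref{cor5.8} as ``a consequence of the results of the previous sections'' without writing out the proof, and the chain you supply (twist by $\phi_{2,\fp}^{-1}$, apply the Ribet lattice argument plus Corollary~\ref{essuni2} to identify $\ov{\rho'}$ with the fixed $\rho_0$, invoke the universal property to get $r_\rho\colon R_{\Sigma,\Oo}\to\Oo$, then conditions (2)--(3) plus Proposition~\ref{prop5.9} and Theorem~\ref{heckedef} to get $R_{\Sigma,\Oo}\cong\bfT_\Sigma$, then untwist) is precisely the intended route.

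Two small places could be tightened. First, you say ordinarity of $\rho'$ in the sense of Definition~\ref{ordinary} follows from the Urban-style ordinarity hypothesis on $\rho$ plus the determinant condition; in fact, since the twist by $\phi_{2,\fp}^{-1}$ shifts which diagonal character is unramified at each of $\fp$ and $\ov\fp$, the cleanest justification is simply that condition~(5) (i.e.\ $\Sigma$-minimality of $\rho'$) already incorporates ordinarity by Definition~\ref{sigmamin}. Second, the final step claiming that $r_\rho$ ``must coincide with the projection to some component $r_{\pi_0}$'' deserves one more sentence: since $R_{\Sigma,\Oo}\cong\bfT_\Sigma$ is a discrete valuation ring and $r_\rho$ is a surjective $\Oo$-algebra map to the domain $\Oo$, its kernel is a prime ideal which cannot be maximal, hence is $(0)$; thus $\bfT_\Sigma\cong\Oo$ and all the maps $r_\pi$, $r_\rho$ to $\Oo$ coincide. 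With that noted, the push-forwards $\rho'$ and $\rho'_{\pi_0}$ of the universal deformation agree, and untwisting gives modularity of $\rho$.
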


\begin{rem} \label{better} Write ${\rm Gal}(F(\Psi)/F)=\Gamma \times 
\Delta$ with $\Gamma \cong \bfZ_p$.
If $p \nmid \# \Delta$
then by Theorem \ref{prop5.10}
condition (2) in Theorem \ref{cor5.8} can be replaced by $\#
(\Oo/L^{\tuint}(0,\phi)) = p^{[\Oo:\bfZ_p]}.$ \end{rem}

\begin{rem}
Theorem~\ref{Eiscong+} and Remark~\ref{r4.5} show that the
conditions for the conductor and infinity type of $\phi$ can be
relaxed if one imposes a condition on the torsion-freeness of a
cohomology group.
\end{rem}

\begin{example} \label{example1}
We now turn to a numerical example in which we can verify the conditions
of Theorem
\ref{cor5.8} (under an additional assumption which we discuss below).
% (under the assumption that $\Oo$ is unramified over the
%smallest extension of $\bfZ_p$ containing the values of $\Psi$ and
%$L^{\tuint}(0,\phi)$) is given by the following:
Let $F=\bfQ(\sqrt{-51})$ and $p=5$ (which splits in $F$). Since the
class number is $2$, there are two unramified Hecke characters of
infinity type $z^2$.
For each of them the functional equation relates the $L$-value at 0 to the
$L$-value at 0 of a Hecke character of infinity type $\ov{z}/z$. The
latter one in turn is equal (by the Weil lifting - see e.g.
\cite{Miyake89}, Theorem 4.8.2 or \cite{Iwaniec97}, Theorem
12.5) to the $L$-value at 1 of a weight 3 modular form of level 51 and
character the Kronecker symbol $\left(\frac{-51}{\cdot}\right)$.
%Using the functional equation the $L$-value at
%0 of these characters is related to the $L$-values of Hecke
%characters of infinity type $\frac{z \overline{z}}{z^2}$. These in
%turn equal by the Weil lifting (see e.g. \cite{Iwaniec97} Theorem
%12.5) the $L$-value of weight 3 elliptic modular forms of level 51
%and character the Kronecker symbol $\left( \frac{-51}{-} \right)$.
Let $\phi$ be the Hecke character of infinity type $z^2$
corresponding to the modular form with $q$-expansion starting with
$q+3q^3+ \ldots$. Using MAGMA \cite{MAGMA} one calculates (see
Remark \ref{calc}) that
$$\mathrm{val}_{5}(L^{\mathrm{int}}(0,\phi))\geq 1.$$
Assuming that the 5-valuation is exactly 1
(see Remark \ref{calc} explaining the computational issues involved) 
this 
%would be enough to satisfy
%condition (2) of Theorem \ref{cor5.8}
%provided that the finite part of $\phi_1$ takes values in an unramified 
%extension of 
%$\bfZ_p$ (cf. Remarks 
%\ref{better2} 
%\ref{OtoO'} and \ref{better}). 
is enough to satisfy condition (2) of Theorem \ref{cor5.8} (cf. Remarks
%\ref{better2}
\ref{OtoO'} and \ref{better}).
The characters
$\chi_0=\ov{\phi_{\fp}\epsilon}$ and $\chi_0^{-1}$
are $\Sigma$-admissible for appropriate sets $\Sigma$ (i.e. they
satisfy conditions (1), (3), (4) and (5) of Definition \ref{adm})
because the ray class field of conductor $5$ (a degree 16 extension
over $F$) has class number $3$ (as calculated by MAGMA assuming
GRH). Here we use that the splitting field $F(\chi_0)$ is contained
in the ray class field of $F$ of conductor $5$.
%What remains open is to check condition (2) of Theorem \ref{cor5.8}.
%This might be possible using the
%Anticyclotomic Main Conjecture (see \cite{Tilouine89}, \cite{Rubin91},
%\cite{MazurTilouine90}, \cite{HidaTilouine94})) if one can
%calculate the 5-valuation of the $p$-adic $L$-function for
%certain finite order Galois characters, which lie outside
%the
%range of interpolation.
\end{example}

\begin{rem} \label{calc}
In our calculation above we use an operation in MAGMA called LRatio which
calculates a rational normalisation of the $L$-value of a modular form 
using
modular symbols. This calculation gives 5-valuation equal to 1. Because 
of  
the different period used by MAGMA we can only confirm that this 
provides a 
lower bound on the 5-valuation 
of $L^{\tuint}(0,\phi)=L(0,\phi)/\Omega^2$, for $\Omega$ the Neron
period of a suitable elliptic curve with complex multiplication by
$F$ (see e.g. \cite{Finis06}, p. 768). This follows from the following
relations between the different periods:

1. The proof of Lemma 7.1 of \cite{Dummigan03} shows that the period
used by MAGMA (RealVolume) is an integral multiple of the canonical
period $\Omega(f)^+$ defined by Vatsal \cite{Vatsal99} (up to
divisors of $Nk!$ for the level $N=51$ and weight $k=3$ of the
modular form).

2. Vatsal \cite{Vatsal99} proves that one can find a Dirichlet
character $\chi$ such that $\tau(\overline \chi) \cdot
\frac{L(1,f,\chi)}{(-2\pi i) \Omega(f)^\pm}$ (with
$\chi(-1)=(-1)^\pm$) is a $5$-unit. Note that Vatsal's condition
that $\overline \rho_f$ is absolutely irreducible is satisfied in
our case and $\Omega(f)^- \sim \Omega(f)^+$ because $f$ is a CM
form. Here we write $\sim$ to indicate equivalence up to $5$-unit .
Because $\pi L(1,f,\chi) \sim L(0,\phi \cdot {\rm
res}_F^{\bfQ}(\overline \chi))$ this implies that $\pi^2 \cdot
\Omega(f)^+$ is a $5$-integral multiple of $\Omega^2$.
\end{rem}

\subsection{A reducible deformation of $\rho_0$} \label{A reducible
deformation of rho_0}

Let $\Psi = \phi_{\fp} \epsilon$. Then $\chi_0 = \ov{\Psi}$.
%Let $\Sigma$ be any finite set of places satisfying the conditions of
%Definition \ref{sigmamin}.
%\com{took out $\Oo_{\Psi}$ because it equals $\Oo$}
For a finite set
of primes $S$ of $F$, let $L_{\Psi}(S)$ denote the maximal abelian
pro-$p$
extension of $F(\Psi)$ unramified outside $S$ and such that
$\Gal(F(\Psi)/F)$ acts on $\Gal(L_{\Psi}(S)/F(\Psi))$ via
$\Psi^{-1}$.

\begin{prop} \label{torsion1} The group 
$\Gal(L_{\Psi}(\Sigma\setminus
\{\ov{\fp}\})/F(\Psi))$ is a torsion $\bfZ_p$-module. \end{prop}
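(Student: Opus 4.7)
My plan is to show that $Y := \Gal(L_\Psi(\Sigma \setminus \{\ov\fp\})/F(\Psi))$ is $\bfZ_p$-torsion by computing its $\bfZ_p$-rank via class field theory and verifying it equals zero.

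Set $F_\infty := F(\Psi)$, $S := \Sigma \setminus \{\ov\fp\}$, and let $X := \Gal(M_S/F_\infty)$, where $M_S$ is the maximal abelian pro-$p$ extension of $F_\infty$ unramified outside the places above $S$. Then $Y$ is the maximal quotient of $X \otimes \Oo$ on which $\Gal(F_\infty/F) = \Gamma \times \Delta$ acts via $\Psi^{-1}$. Since $p \nmid \# \Delta$, one may first extract the $\Psi^{-1}|_\Delta$-isotypic piece of $X \otimes \Oo$ (an exact operation), and $Y \otimes \Oo$ is then obtained as $\Gamma$-coinvariants for the twisted action by $\Psi^{-1}|_\Gamma$. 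The Iwasawa-theoretic finite generation of $X$ over $\Lambda := \Oo[[\Gamma]]$ implies that $Y$ is finitely generated over $\bfZ_p$, so it suffices to prove $\rk_{\bfZ_p}(Y) = 0$.

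By class field theory, $X \otimes \Oo$ fits into an exact sequence
$$\ov{\mE} \otimes \Oo \longrightarrow \Bigl(\prod_{v \in S'} U_v^{(1)}\Bigr) \otimes \Oo \longrightarrow X \otimes \Oo \longrightarrow \Cl_{F_\infty, S}^{(p)} \otimes \Oo \longrightarrow 0,$$
with $S'$ the places of $F_\infty$ above $S$, $U_v^{(1)}$ the pro-$p$ principal local units, $\mE$ the $S$-unit group, and the last term the $p$-part of the $S$-class group. For $v \in S'$ with $v \nmid p$, $U_v^{(1)}$ is pro-$\ell$ for some $\ell \neq p$ and so contributes no $\bfZ_p$-rank. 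Since $\ov\fp \notin S$, no primes above $\ov\fp$ occur, and the sole contribution on the semi-local side comes from primes above $\fp$. By Brumer's theorem \cite{Brumer67}, the Leopoldt conjecture holds for $F_\infty$ (abelian over $\bfQ$), yielding an injection of $\ov{\mE} \otimes \Oo$ into the semi-local units at primes above $p$ of the expected Dirichlet rank.

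A direct rank computation in the $\Psi^{-1}$-eigenspace---using the Hodge--Tate weights $(-1, +1)$ of $\Psi$ at $(\fp, \ov\fp)$ and the anticyclotomic nature of $\chi_0 = \ov\Psi$ (Definition \ref{adm}(3))---then shows that the $\bfZ_p$-rank of the $\Psi^{-1}$-part of $\ov{\mE} \otimes \Oo$ matches exactly the $\bfZ_p$-rank of the $\Psi^{-1}$-part of $\prod_{v \mid \fp} U_v^{(1)} \otimes \Oo$, so the two cancel in the middle term and $\rk_{\bfZ_p}(Y) = 0$. The main obstacle is precisely this rank balance in the $\Psi^{-1}$-eigenspace: it relies crucially on the exclusion of $\ov\fp$ from $S$, since allowing ramification at $\ov\fp$ would add a local contribution with no matching global correction, producing a non-torsion module that corresponds to the cyclotomic direction underlying the non-existence of $\Sigma$-minimal reducible characteristic-$0$ deformations noted in the introduction.
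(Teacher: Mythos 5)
Your strategy is genuinely different from the paper's: the paper's proof is a two-line reduction to the Anticyclotomic Main Conjecture of Iwasawa theory (Tilouine, Rubin, Mazur--Tilouine) combined with the observation that $L(0,\phi)\neq 0$, whereas you attempt a self-contained class field theory rank computation. The setup you build (the CFT exact sequence relating closures of global units, semi-local units, and the class group, together with Brumer's theorem for Leopoldt) is sound infrastructure, and it does mirror the techniques used in Lemma \ref{Gras2}. The problem is that the final step --- the assertion that the $\bfZ_p$-rank of the $\Psi^{-1}$-part of $\ov{\mE}\otimes\Oo$ exactly matches the $\bfZ_p$-rank of the $\Psi^{-1}$-part of $\prod_{v\mid\fp}U_v^{(1)}\otimes\Oo$ --- is not a formal consequence of Leopoldt, Hodge--Tate weights, or anticyclotomicity; it is the hard content of the proposition, and you have essentially restated it rather than proved it.

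Here is the concrete gap. Brumer/Leopoldt gives injectivity of the (closure of the) global unit group into the semi-local units at \emph{all} places above $p$, i.e.\ above both $\fp$ and $\ov\fp$. But the module you want to control allows ramification only at $\fp$, so the relevant map is the composite with the projection onto the $\fp$-component alone. That projection can drop rank, and nothing in Leopoldt or in the weight computation rules this out. In fact, Proposition \ref{red3} (using Greenberg's theorem) shows that when one allows ramification at both $\fp$ and $\ov\fp$, the $\Psi^{-1}$-coinvariants have $\bfZ_p$-rank exactly $1$; the content of Proposition \ref{torsion1} is precisely that this rank-$1$ piece lives in the $\ov\fp$-direction, i.e.\ that the global units do not degenerate under the projection to $\fp$. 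This non-degeneracy is a $p$-adic-regulator-type nonvanishing statement, and it is exactly what the Main Conjecture plus $L(0,\phi)\neq 0$ supplies; it does not follow from a rank count. (A secondary, fixable issue: you invoke $p\nmid\#\Delta$ to split off the $\Delta$-isotypic piece, but that hypothesis is imposed only later, in Theorem \ref{prop5.10}, not for Proposition \ref{torsion1}; working over $E$ rather than $\Oo$ would sidestep it.) So the argument as written does not close, and the Iwasawa-theoretic input used in the paper appears to be essential.
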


\begin{proof} The $\Sigma$-admissibility of $\chi_0$ implies that the
extension $L_{\Psi}(\Sigma\setminus \{\ov{\fp}\})/F(\Psi)$ is
unramified away from the primes lying over $\fp$. Then the claim
follows from the Anticyclotomic Main Conjecture of Iwasawa Theory
for imaginary quadratic fields (see \cite{Tilouine89},
\cite{Rubin91}, \cite{MazurTilouine90})  after noting that $L(0,\phi)
\neq 0$.
\end{proof}

\begin{cor} \label{torsion2} There does not exist a $\Sigma$-minimal
reducible deformation of $\rho_0$ into $\GL_2(A)$ if $A$ is not a
torsion $\Oo$-algebra. \end{cor}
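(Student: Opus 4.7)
The plan is to suppose for contradiction that a $\Sigma$-minimal reducible deformation $\rho: G_\Sigma \to \GL_2(A)$ exists for some non-$\Oo$-torsion $A$, and then to derive a contradiction by combining a character analysis with Proposition \ref{torsion1}. First I reduce to the DVR case: since $A \otimes_\Oo E \neq 0$, modding out by a minimal prime of $A$ not containing $\varpi$ gives a local $\Oo$-domain of characteristic zero, which maps into a DVR $\Oo' \supseteq \Oo$ sitting inside its fraction field. Composition yields a $\Sigma$-minimal reducible deformation $\rho': G_\Sigma \to \GL_2(\Oo')$; writing $\rho' = \begin{pmatrix}\chi_1 & c \\ 0 & \chi_2\end{pmatrix}$ we have $\chi_1\chi_2 = \Psi$.

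The second step is to show $\chi_1 = 1$, forcing $\chi_2 = \Psi$. Since $\chi_1$ lifts the trivial character, its image lies in the pro-$p$ group $1 + \fm_{\Oo'}$. The ordinary condition at $\fp$ places the unramified character on top in the ordinary basis, matching the global upper-triangular form and yielding $\chi_1|_{D_\fp}$ unramified. At $\overline\fp$ the ordinary condition places the unramified character on the bottom, while the global form places $\chi_1$ (lifting the unramified character $1$) on top; the only way to reconcile these two orderings is that $\rho'|_{D_{\overline\fp}}$ splits, which simultaneously forces $\chi_1|_{D_{\overline\fp}}$ to be unramified and makes $c|_{D_{\overline\fp}}$ a coboundary. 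At primes $\fq \in \Sigma$ with $\#\OF/\fq \equiv 1 \pmod p$, $\Sigma$-minimality trivializes $\chi_1|_{I_\fq}$; at the remaining primes of $\Sigma$, any pro-$p$ character is automatically trivial on inertia (the $p$-part of tame inertia vanishes, and wild inertia is pro-$\ell$ for $\ell \neq p$). Hence $\chi_1$ is unramified everywhere, and $p \nmid \#\Cl_F$ forces $\chi_1 = 1$.

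Finally, I apply Proposition \ref{torsion1} to the cocycle $\tilde c(\sigma) := c(\sigma)\Psi(\sigma)^{-1}$, viewed as a class in $H^1(G_\Sigma, \Oo'(\Psi^{-1}))$. The coboundary condition at $\overline\fp$ together with the other $\Sigma$-minimality conditions means that the restriction $\tilde c|_{G_{F(\Psi)}}$ gives a $\Gal(F(\Psi)/F)$-equivariant homomorphism (with action via $\Psi^{-1}$) that factors through $\Gal(L_\Psi(\Sigma \setminus \{\overline\fp\})/F(\Psi))$; by Proposition \ref{torsion1} this target is $\bfZ_p$-torsion, and since $\Oo'$ is $\bfZ_p$-torsion-free the restriction must vanish. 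By inflation-restriction $[\tilde c]$ then inflates from $H^1(\Gal(F(\Psi)/F), \Oo'(\Psi^{-1}))$, and writing $\Gal(F(\Psi)/F) = \Delta \times \Gamma$ with $\Delta$ the prime-to-$p$ image in $\bfF^\times$ and $\Gamma \cong \bfZ_p$, the non-triviality of $\chi_0^{-1}|_\Delta$ gives $H^0(\Delta, \Oo'(\Psi^{-1})) = 0$ and Hochschild--Serre yields $H^1(\Delta \times \Gamma, \Oo'(\Psi^{-1})) = 0$. Hence $[\tilde c] = 0$, so $\rho'$ is strictly equivalent to $\diag(1, \Psi)$, and reducing modulo the uniformizer contradicts the non-semisimplicity of $\rho_0$. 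The main obstacle is the character analysis in Step~2, specifically the observation that the mismatch between the orderings in the ordinary conditions at $\fp$ and at $\overline\fp$ forces local splitting at $\overline\fp$; this coboundary condition is what allows us to invoke Proposition \ref{torsion1}.
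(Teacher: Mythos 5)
Your proof is correct and follows essentially the same path as the paper's, hinging on Proposition~\ref{torsion1}. The paper's own proof is much terser: it cites Proposition~\ref{nored} for the reduction to the form $\bigl(\begin{smallmatrix}1 & * \\ & \Psi\end{smallmatrix}\bigr)$ (your step~2), uses ordinarity at $\ov{\fp}$ to conclude the upper shoulder is unramified away from $\fp$, and then simply states that this contradicts Proposition~\ref{torsion1}. Your extra inflation--restriction step --- showing that once the restriction of the class to $G_{F(\Psi)}$ lands in a torsion target while $\Oo'$ is torsion-free, the class itself must vanish (via $H^1(\Gal(F(\Psi)/F),\Oo'(\Psi^{-1}))=0$), so that $\rho'$ would split and contradict the non-semisimplicity of $\rho_0$ --- is a useful amplification of what the paper leaves implicit; without it, a non-torsion $A$ does not \emph{directly} force a non-torsion extension, since the image of the cocycle could a priori be torsion. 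One small inaccuracy in step~1: a local complete Noetherian $\Oo$-domain of characteristic zero need not map into a discrete valuation ring ``sitting inside its fraction field'' when its Krull dimension exceeds one; you should instead pass to a further quotient by a prime of the right height (not containing $\varpi$), then normalize the resulting one-dimensional domain, accepting that the residue field of $\Oo'$ may be a finite extension of $\bfF$ --- which is harmless for the subsequent torsion/torsion-free argument.
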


\begin{proof} As in Proposition \ref{nored} such a deformation would have to be
of the form \be \label{red43} \rho = \bmat 1 & * \\ & \Psi\emat.\ee
By ordinarity, one must also have $$\rho|_{I_{\ov{\fp}}} \cong \bmat
1 \\ & \Psi|_{I_{\ov{\fp}}} \emat, $$ which implies that the
upper shoulder $*$ in (\ref{red43}) corresponds to an
extension $L/F(\Psi)$ which is unramified away from primes lying
over $\fp$. Since $A$ is not a torsion $\bfZ_p$-module, this would
contradict Proposition \ref{torsion1}. \end{proof}

\begin{rem} In \cite{SkinnerWiles97} Skinner and Wiles prove an $R=T$
theorem
for deformations of a certain class of reducible
(non-semi-simple) residual representations of $G_{\bfQ}$ of the form 
$\bsmat 1 & *
\\ & \chi \esmat$ for $\chi: G_{\bfQ} \rightarrow \ov{\bfF}_p^{\times}$ a
continuous character. They apply the numerical criterion of
Wiles and Lenstra \cite{Lenstra95}, \cite{Wiles95} by first relating the 
size of 
the relevant
universal deformation ring to a special value of the $L$-function of
$\chi$. They achieve this by studying the Galois cohomology of $\ad
\rho$ for a $\Sigma$-minimal reducible deformation $\rho$ with
values in a characteristic zero $\bfZ_p$-algebra $\Oo$. Here
$\Sigma$ is a finite set of primes of $\bfQ$ satisfying similar
conditions to the ones we imposed on our sets $\Sigma$. Corollary
\ref{torsion2} means that their method cannot be applied in our
case.
\end{rem}

Even though no $\Sigma$-minimal characteristic zero deformations of
$\rho_0$ exist, we now show that if one drops the ordinarity
condition at $\ov{\fp}$, it is possible to construct a reducible
(non-ordinary) deformation of $\rho_0$ into $\GL_2(\Oo)$.

\begin{prop} \label{red3} The group
$\Gal(L_{\Psi}(\Sigma)/F(\Psi))\otimes_{\Oo[\Gal(F(\Psi)/F)]}\Oo$ is an
$\Oo$-module of rank one. \end{prop}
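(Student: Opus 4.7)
The plan is to augment the setup of Proposition \ref{torsion1} by allowing ramification at $\ov\fp$ and to quantify the extra $\Oo$-rank this produces. Set $K = F(\Psi)$ and $G = \Gal(K/F)$, and write $X_1 = \Gal(L_{\Psi}(\Sigma)/K)$ and $X_2 = \Gal(L_{\Psi}(\Sigma\setminus\{\ov\fp\})/K)$. Proposition \ref{torsion1} gives that $X_2\otimes_{\Oo[G]}\Oo$ is $\Oo$-torsion, so any new $\Oo$-rank in the analogous tensor product for $X_1$ must originate from the local contribution at primes above $\ov\fp$.

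Global class field theory --- applied at finite layers of $K/F$ and passed to the limit --- produces a four-term exact sequence of $\Oo[G]$-modules
\begin{equation*}
\mathcal{E} \longrightarrow \bigoplus_{w\mid\ov\fp} \Oo_{K,w}^{\times}(p) \longrightarrow X_1 \longrightarrow X_2 \longrightarrow 0,
\end{equation*}
where $\mathcal{E}$ is the closure of the diagonally embedded global units of $K$ and $(p)$ denotes pro-$p$ completion. Applying $-\otimes_{\Oo[G]}\Oo$ kills the $X_2$ term up to torsion and reduces the statement to computing the $\Psi^{-1}$-isotypic $\Oo$-rank of the local unit module modulo the image of $\mathcal{E}$. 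Identifying the middle term with $\mathrm{Ind}_{D_{\ov\fp}}^{G}\Oo_{K,\tilde w}^{\times}(p)$ for a fixed prime $\tilde w\mid\ov\fp$, Frobenius reciprocity reduces this further to the $\Psi^{-1}|_{D_{\ov\fp}}$-isotypic piece of $\Oo_{K,\tilde w}^{\times}(p)\otimes\Oo$.

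For this local computation, the $p$-adic logarithm together with the normal basis theorem at each finite layer $K_n\subset K$ show that $K_{n,\tilde w}$ is a free $\bfQ_p[D_n]$-module of rank one, so every character of $D_n$ (in particular $\Psi^{-1}|_{D_n}$) appears with multiplicity exactly one in $\Oo_{K_n,\tilde w}^{\times}\otimes\bfQ_p$. Passing to the limit, the $\Psi^{-1}|_{D_{\ov\fp}}$-isotypic $\Oo$-rank is exactly one, yielding the claim modulo the contribution from $\mathcal{E}$; the latter is torsion by Dirichlet's unit theorem together with the essentially anticyclotomic nature of $\Psi$, which prevents global units from contributing to the $\Psi^{-1}$-isotypic piece.

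The main obstacle is the infinite-level bookkeeping: $K/F$ is not a number field, so the class field theory sequence above must be set up at finite subextensions $K_n$ (for example, those cutting out $\Psi\bmod\varpi^n$) and the rank-one conclusion must be verified to survive the inverse limit, using the structure theory of $\Oo[[G]]$-modules in the spirit of the proof of Proposition \ref{torsion1}. Verifying compatibility of the local isotypic rank with the passage to the limit, and cleanly controlling the global-unit contribution at each layer, is where the technical work of the argument lies.
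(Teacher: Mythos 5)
Your approach is genuinely different from the paper's. The paper does not compare the two ramification conditions at all: it works directly with the full Galois group $X$ of the maximal abelian pro-$p$ extension of $F(\Psi)$ unramified outside $p$, invokes Greenberg's structure theorem (\cite{Greenberg78}, p.~85) to get $X\otimes E\cong\Lambda_E^{\#\Delta}\oplus(\text{torsion})$, observes that each character of $\Delta$ then occurs with $\Lambda_E$-rank exactly one, and extracts the $\Psi$-part by one further specialization $\Lambda_E/(T+1-\Psi(\gamma))\cong E$. You instead propose a relative class-field-theory computation that isolates the extra ramification at $\ov\fp$ on top of Proposition~\ref{torsion1}.

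There is, however, a gap that I think is fatal as written: the assertion that global units contribute nothing to the $\Psi^{-1}$-isotypic piece is not correct. For a finite layer $K_n=F(\Psi_n)$, which is totally imaginary of degree $2r_2$ over $\bfQ$, one has $\Oo_{K_n}^\times\otimes\bfQ\cong\bigl(\Ind_{\langle c'\rangle}^{\Gal(K_n/\bfQ)}\mathbf 1\bigr)\ominus\mathbf 1$; since complex conjugation $c'$ lies outside $\Gal(K_n/F)$, Mackey's formula gives $\Res_{\Gal(K_n/F)}\Ind_{\langle c'\rangle}\mathbf 1\cong\bfQ[\Gal(K_n/F)]$, the regular representation. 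Hence \emph{every} nontrivial character of $\Gal(K_n/F)$, in particular $\Psi_n^{-1}$, appears with multiplicity exactly one in the unit group, not zero. So your count ``local $\Psi^{-1}$-rank $1$ minus global $\Psi^{-1}$-rank $0$ equals $1$'' should actually read ``$1$ minus (something of rank at most $1$)'', and without further input it could just as well give rank $0$. To rescue the argument you would need to show that the $\Psi^{-1}$-part of the closure of the global units has trivial image in the semilocal units at $\ov\fp$ alone (equivalently, that it injects into the $\fp$-component) --- a nonvanishing statement of Leopoldt/Gross type that is not automatic and is nowhere addressed in your proposal. This is not merely ``bookkeeping at infinite level''; it is a genuine arithmetic input. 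Relatedly, the four-term sequence you write down is only heuristic: the left-hand term is not literally the closure of the global units but rather a subgroup of it cut out by a condition at $\fp$, and the exactness needs to be verified before the rank count can even start. The paper avoids all of this precisely because Greenberg's theorem already packages the semilocal/global unit interaction into the statement that $X\otimes E$ has $\Lambda_E$-rank $\#\Delta$.
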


\begin{proof} This follows from a result of Greenberg \cite{Greenberg78}
as we now explain. As before the $\Sigma$-admissibility of $\chi_0$ easily
implies that the extension $ L_{\Psi}(\Sigma)/F(\Psi)$ is unramified away
from primes lying over $\{\fp, \ov{\fp}\}$. Hence without loss of
generality we assume that $\Sigma = \{\fp, \ov{\fp}\}$. We have the
following diagram of fields
\be\xymatrix@!C{& F(\Psi) \ar@{-}[dl]_{\Gamma} \ar@{-}[dr]^{\Delta} \\
F_0 \ar@{-}[dr]_{\Delta} && F_{\iy} \ar@{-}[dl]^{\Gamma} \\
& F}\ee where $\Gamma\cong \bfZ_p$ and $\Delta$ is a finite group
(whose non-$p$-part is isomorphic to the group $\Gal(F(\chi_0)/F)$). Set
$X_{\Psi}:= \Gal(L_{\Psi}(\Sigma)/F(\Psi))$. Let $L/F(\Psi)$ be the
maximal abelian pro-$p$ extension of $F(\Psi)$ unramified away from
$\{ \fp, \ov{\fp}\}$ and write $X$ for $\Gal(L/F(\Psi))$. Then
$X_{\Psi}$ is a quotient of $X$. Both, $X$ and $X_{\Psi}$ are
$\bfZ_p[[\Gamma]]$-modules in a natural way. By choosing a generator
$\g$ of $\G$ we can make the indentification $$\Lambda:=
\bfZ_p[[\G]] \cong \bfZ_p[[T]]$$ by sending $\g$ to $T+1$. By
Theorem on page 85 of \cite{Greenberg78} we have
$$X\otimes_{\bfZ_p}E \cong
\Lambda_E^{\# \Delta} \oplus \textup{($\Lambda_E$-torsion)},$$ where
$\Lambda_E:= \Lambda \otimes E$.
Let $X_E:= (X\otimes E)$/($\Lambda_E$-torsion).

Consider the action of
$E[\Delta]$ on $X_E$. Let $\Delta^{\vee}$
denote the group of characters of $\Delta$, and
write
$$X_E=
\bigoplus_{\psi \in \Delta^{\vee}} X_E^{\psi},$$
where $$X_E^{\psi}:=\{x \in X_E\mid \sigma x =
\psi(\sigma)x \hf \textup{for every} \hs \sigma \in \Delta\}.$$
It is not hard to see that for
every $\psi \in \Delta^{\vee}$, one has $X_E^{\psi}
\neq 0$. This in particular means that every character of $\Delta$
appears exactly once, because the $\Lambda_E$-rank of $X_E$ equals
$\# \Delta$. Now,
consider the action of $\G$ on
$X_E$. Let $\Psi_0:= \Psi|_{\Gal(F(\Psi)/F_0)}$. Since
$\Gal(F(\Psi)/F) \cong \Delta \times \Gal(F(\Psi)/F_0)$,
%\com{why is
%this true?}
we can study the action of the two direct summands
separately. We have $X_E^{\Psi|_{\Delta}} = \Lambda_E$, hence
$$\Gal(L_{\Psi}(\Sigma)/F(\Psi))\otimes_{\bfZ_p[\Gal(F(\Psi)/F)]} E \cong
\Lambda_E/(T+1-\Psi(\gamma)) \cong E,$$
%\com{K: In the above formula the pseudo-isom. can now be replaced with
%isom.}
where $\gamma$ is a topological
generator of $\Gal(F(\Psi)/F_0)$. This clearly implies the claim of the
proposition.
\end{proof}

 \begin{cor} \label{red5} There exists a deformation $\rho: 
G_{\Sigma}
\rightarrow \GL_2(\Oo)$ of $\rho_0$ of the form $$\rho \cong \bmat
1&
*\\ & \Psi \emat.$$ The extension $F(\rho)/F(\Psi)$ is unramified
away from $\{\fp, \ov{\fp}\}$. \end{cor}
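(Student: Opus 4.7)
The plan is to build $\rho$ from the rank-one quotient furnished by Proposition \ref{red3} via inflation-restriction. An upper-triangular representation $\bmat 1 & * \\ & \Psi \emat$ corresponds to an extension of $\Oo(\Psi)$ by $\Oo$ as $G_\Sigma$-modules, whose equivalence class lies in $H^1(G_\Sigma, \Oo(\Psi^{-1}))$. First, I would extract from Proposition \ref{red3} a surjection $X_\Psi := \Gal(L_\Psi(\Sigma)/F(\Psi)) \twoheadrightarrow \Oo$ of $\Oo[\Gal(F(\Psi)/F)]$-modules, and let $\alpha$ be a generator of the corresponding rank-one $\Oo$-module of such maps. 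Via inflation-restriction for $\Gal(F_\Sigma/F(\Psi)) \lhd G_\Sigma$, $\alpha$ appears in $H^1(\Gal(F_\Sigma/F(\Psi)), \Oo(\Psi^{-1}))^{\Gal(F(\Psi)/F)}$, and the obstruction to lifting it to $H^1(G_\Sigma, \Oo(\Psi^{-1}))$ lies in $H^2(\Gal(F(\Psi)/F), \Oo(\Psi^{-1}))$.

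Next, I would show this obstruction vanishes. Since $\Psi$ has image in $\Oo^\times = (1+\varpi\Oo) \times \bfF^\times$, one has $\Gal(F(\Psi)/F) \cong \Gamma \times \Delta$ with $\Gamma \cong \bfZ_p$ and $\Delta$ a finite group of order prime to $p$. Hochschild--Serre for $\Delta \lhd \Gamma \times \Delta$ together with $H^i(\bfZ_p, M) = 0$ for $i \geq 2$ and $H^{>0}(\Delta, M) = 0$ on $\Oo$-modules $M$ (since $p \nmid \#\Delta$) yields $H^2(\Gal(F(\Psi)/F), \Oo(\Psi^{-1})) = 0$. Hence $\alpha$ lifts to a class $[*] \in H^1(G_\Sigma, \Oo(\Psi^{-1}))$, which defines a representation $\rho: G_\Sigma \to \GL_2(\Oo)$ of the desired shape via any cocycle representative of $[*]$.

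Finally, I would verify that $\rho$ is a deformation of $\rho_0$ with the claimed ramification. Choosing $\alpha$ to be an $\Oo$-module generator ensures its reduction modulo $\varpi$ is non-zero, so $\ov\rho$ is a non-split upper-triangular representation of shape $\bmat 1 & * \\ & \chi_0 \emat$ satisfying conditions (Red) and (Sc); by Corollary \ref{essuni2}, $\ov\rho \cong \rho_0$. For the ramification claim, $F(\rho)/F(\Psi)$ is cut out by the restriction of $*$ to $\Gal(F_\Sigma/F(\Psi))$, which factors through $X_\Psi$ by construction, so $F(\rho) \subset L_\Psi(\Sigma)$; the $\Sigma$-admissibility of $\chi_0$ (already used in the proof of Proposition \ref{torsion1}) then ensures $L_\Psi(\Sigma)/F(\Psi)$ is unramified outside $\{\fp, \ov\fp\}$. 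The main obstacle is the $H^2$-vanishing, which is cleanly reduced to the group-cohomology computation above thanks to the pro-$p$-times-prime-to-$p$ structure of $\Gal(F(\Psi)/F)$.
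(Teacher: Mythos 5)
Your overall strategy---extract a Galois-equivariant homomorphism on $X_\Psi$ from Proposition~\ref{red3}, view it as a class in $H^1(\Gal(F_\Sigma/F(\Psi)),\Oo(\Psi^{-1}))^{\Gal(F(\Psi)/F)}$, lift via inflation--restriction after killing the $H^2$-obstruction, then invoke Corollary~\ref{essuni2}---is the right one, and it supplies the details the paper omits by deferring to \cite{SkinnerWiles97}. The ramification argument at the end is also correct.

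However, the step where you write $\Oo^\times = (1+\varpi\Oo)\times\bfF^\times$ and conclude that $\Delta$ has order prime to $p$ is not justified. The group $1+\varpi\Oo$ is pro-$p$, but it need not be torsion-free: if $E$ contains $p$-power roots of unity then $1+\varpi\Oo$ has $p$-torsion, and the torsion part $\Delta$ of the image of $\Psi$ could inherit it. This is not a hypothetical worry: the paper explicitly imposes $p\nmid\#\Delta$ as a \emph{hypothesis} in Theorem~\ref{prop5.10} and Remark~\ref{better}, which would be pointless if it held automatically, and no such hypothesis is made in the statement of Corollary~\ref{red5}. So the appeal to $H^{>0}(\Delta,-)=0$ is not available.

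The good news is that the desired vanishing $H^2(\Gal(F(\Psi)/F),\Oo(\Psi^{-1}))=0$ still holds, just for a slightly different reason. Running Hochschild--Serre for $\Gamma\lhd\Gamma\times\Delta$ instead, and using $H^0(\Gamma,\Oo(\Psi^{-1}))=0$ and $\operatorname{cd}_p(\Gamma)=1$, the only possible contribution to $H^2$ is $E_2^{1,1}=H^1(\Delta,H^1(\Gamma,\Oo(\Psi^{-1})))$, where $H^1(\Gamma,\Oo(\Psi^{-1}))\cong\Oo/(\Psi^{-1}(\gamma_0)-1)$ with $\Delta$ acting via $\Psi^{-1}|_{\Delta}$. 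Now $\Delta$ is cyclic (a finite subgroup of $E^\times$), say generated by $\delta$, and since $\chi_0=\ov\Psi$ is nontrivial its image $\Gal(F(\chi_0)/F)$ is a nontrivial prime-to-$p$ quotient of $\Delta$; hence $\#\Delta$ is not a power of $p$, so $\Psi^{-1}(\delta)$ is a primitive root of unity of order not a $p$-power and $\Psi^{-1}(\delta)-1$ is a unit. The cyclic-group cohomology formula then gives $H^1(\Delta,\Oo/(\Psi^{-1}(\gamma_0)-1))=0$, and hence $H^2=0$. (Equivalently, one can first work with $E$-coefficients, where all positive-degree $\Delta$-cohomology vanishes because $\#\Delta$ is invertible in $E$, and then descend to a $\varpi$-primitive $\Oo$-lattice class.) You should also note that the same computation gives $H^1(\Gal(F(\Psi)/F),\Oo(\Psi^{-1}))=0$, which makes the lift unique and then $\operatorname{res}(\overline{[*]})=\overline{\alpha}\neq 0$ gives the non-splitness you need for condition \textup{(Sc)}.

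So: the architecture of your proof is sound and matches the intent of the paper, but the justification of the $H^2$-vanishing needs to be repaired as above rather than resting on the unwarranted assertion $p\nmid\#\Delta$.
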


\begin{proof} This follows easily from Proposition \ref{red3}. See for
example the discussion on page 10522 of \cite{SkinnerWiles97}.
\end{proof}

\begin{rem} The representation $\rho$ in Corollary \ref{red5} is not
ordinary. Indeed, if it were ordinary the representation
$\rho|_{D_{\ov{\fp}}}$ would have an unramified quotient. Since it
clearly has an unramified submodule, it would be split and thus the
upper shoulder $*$ would correspond to a non-$\bfZ_p$-torsion
extension of $F(\Psi)$ unramified away from $\fp$, which does not
exist by Proposition \ref{torsion1}. On the other hand $\rho$ is
\textit{nearly ordinary} in the sense of Tilouine (see e.g.
Definition 3.1 of \cite{Weston05}) with respect to the
upper-triangular Borels at $\fp$ and $\ov{\fp}$. Since one has
$$\rho|_{I_{\ov{\fp}}} \cong \bmat 1 &
*\\ & \epsilon|_{ I_{\ov{\fp}}}\emat,$$ the representation $\rho$
is, however, not  de Rham. \end{rem}

\section{Acknowledgements} The authors would like to thank Adebisi 
Agboola, 
Trevor Arnold, Frank
Calegari, John
Coates, Matthew
Emerton, Ralph Greenberg, Chris Skinner, and Jacques Tilouine for
helpful discussions and comments.

\bibliographystyle{amsalpha}
\bibliography{standard2}

\providecommand{\bysame}{\leavevmode\hbox to3em{\hrulefill}\thinspace}
\providecommand{\MR}{\relax\ifhmode\unskip\space\fi MR }
% \MRhref is called by the amsart/book/proc definition of \MR.
\providecommand{\MRhref}[2]{%
  \href{http://www.ams.org/mathscinet-getitem?mr=#1}{#2}
}
\providecommand{\href}[2]{#2}
\begin{thebibliography}{BCDT01}

\bibitem[BC06]{BellaicheChenevier06}
J.~Bella{\"{\i}}che and G.~Chenevier, \emph{Lissit\'e de la courbe de {H}ecke
  de {$\rm GL\sb 2$} aux points {E}isenstein critiques}, J. Inst. Math. Jussieu
  \textbf{5} (2006), no.~2, 333--349.

\bibitem[BCDT01]{BCDT}
C.~Breuil, B.~Conrad, F.~Diamond, and R.~Taylor, \emph{On the modularity of
  elliptic curves over {$\bold Q$}: wild 3-adic exercises}, J. Amer. Math. Soc.
  \textbf{14} (2001), no.~4, 843--939 (electronic).

\bibitem[Ber05]{Berger05}
T.~Berger, \emph{An {E}isenstein ideal for imaginary quadratic fields}, Thesis,
  University of Michigan, Ann Arbor, 2005.

\bibitem[Ber08]{Berger08}
\bysame, \emph{Denominators of {E}isenstein cohomology classes for {${\rm
  GL}\sb 2$} over imaginary quadratic fields}, Manuscripta Math. \textbf{125}
  (2008), no.~4, 427--470.

\bibitem[Ber09]{B09}
\bysame, \emph{{O}n the {E}isenstein ideal for imaginary quadratic fields},
  Compos. Math. \textbf{145} (2009), no.~3, 603--632.

\bibitem[BH07]{BergerHarcos07}
T.~Berger and G.~Harcos, \emph{{$l$}-adic representations associated to modular
  forms over imaginary quadratic fields}, Int. Math. Res. Not. IMRN (2007),
  no.~23, Art. ID rnm113, 16.

\bibitem[Bru67]{Brumer67}
A.~Brumer, \emph{On the units of algebraic number fields}, Mathematika
  \textbf{14} (1967), 121--124.

\bibitem[Cal06]{Calegari06}
F.~Calegari, \emph{Eisenstein deformation rings}, Compos. Math. \textbf{142}
  (2006), no.~1, 63--83.

\bibitem[CBE06]{MAGMA}
J.~J. Cannon and W.~Bosma~(Eds.), \emph{Handbook of {M}agma {F}unctions},
  {E}dition 2.13 (2006).

\bibitem[CD06]{CalegariDunfield06}
F.~Calegari and N.~M. Dunfield, \emph{Automorphic forms and rational homology
  3-spheres}, Geom. Topol. \textbf{10} (2006), 295--329 (electronic).

\bibitem[CMar]{CalegariMazur07}
F.~Calegari and B.~Mazur, \emph{{N}early {O}rdinary {G}alois {D}eformations
  over {A}rbitrary {N}umber {F}ields}, Journal de l'Institut de Math. de
  Jussieu (to appear), arXiv:math.NT/0708.2451.

\bibitem[CW77]{CoatesWiles77}
J.~Coates and A.~Wiles, \emph{Kummer's criterion for {H}urwitz numbers},
  Algebraic number theory (Kyoto Internat. Sympos., Res. Inst. Math. Sci.,
  Univ. Kyoto, Kyoto, 1976), Japan Soc. Promotion Sci., Tokyo, 1977, pp.~9--23.

\bibitem[DSW03]{Dummigan03}
N.~Dummigan, W.~Stein, and M.~Watkins, \emph{Constructing elements in
  {S}hafarevich-{T}ate groups of modular motives}, Number theory and algebraic
  geometry, London Math. Soc. Lecture Note Ser., vol. 303, Cambridge Univ.
  Press, Cambridge, 2003, pp.~91--118.

\bibitem[Fin06]{Finis06}
T.~Finis, \emph{Divisibility of anticyclotomic {$L$}-functions and theta
  functions with complex multiplication}, Ann. of Math. (2) \textbf{163}
  (2006), no.~3, 767--807.

\bibitem[Fuj99]{Fujiwara99}
K.~Fujiwara, \emph{Deformation rings and {H}ecke algebras in the totally real
  case}, Preprint (1999).

\bibitem[Gra03]{Gras03}
G.~Gras, \emph{Class field theory}, Springer Monographs in Mathematics,
  Springer-Verlag, Berlin, 2003, From theory to practice, Translated from the
  French manuscript by Henri Cohen.

\bibitem[Gre78]{Greenberg78}
R.~Greenberg, \emph{On the structure of certain {G}alois groups}, Invent. Math.
  \textbf{47} (1978), no.~1, 85--99.

\bibitem[Guo93a]{Guo93a}
Li~Guo, \emph{General {S}elmer groups and critical values of {H}ecke
  {$L$}-functions}, Math. Ann. \textbf{297} (1993), no.~2, 221--233.

\bibitem[Guo93b]{Guo93b}
\bysame, \emph{On a generalization of {T}ate dualities with application to
  {I}wasawa theory}, Compositio Math. \textbf{85} (1993), no.~2, 125--161.

\bibitem[Hid82]{Hida82}
H.~Hida, \emph{Kummer's criterion for the special values of {H}ecke
  {$L$}-functions of imaginary quadratic fields and congruences among cusp
  forms}, Invent. Math. \textbf{66} (1982), no.~3, 415--459.

\bibitem[Iwa97]{Iwaniec97}
H.~Iwaniec, \emph{Topics in classical automorphic forms}, Graduate Studies in
  Mathematics, vol.~17, American Mathematical Society, Providence, RI, 1997.

\bibitem[Kis07]{Kisin07}
M.~Kisin, \emph{The {F}ontaine-{M}azur conjecture for {${\rm GL}\sb 2$}},
  Preprint (2007).

\bibitem[Len95]{Lenstra95}
H.~W. Lenstra, Jr., \emph{Complete intersections and {G}orenstein rings},
  Elliptic curves, modular forms, \& Fermat's last theorem (Hong Kong, 1993),
  Ser. Number Theory, I, Int. Press, Cambridge, MA, 1995, pp.~99--109.

\bibitem[LR07]{LozanoRobledo07}
{\'A}.~Lozano-Robledo, \emph{Bernoulli numbers, {H}urwitz numbers, {$p$}-adic
  {$L$}-functions and {K}ummer's criterion}, RACSAM Rev. R. Acad. Cienc.
  Exactas F\'\i s. Nat. Ser. A Mat. \textbf{101} (2007), no.~1, 1--32.

\bibitem[Maz97]{Mazur97}
B.~Mazur, \emph{An introduction to the deformation theory of {G}alois
  representations}, Modular forms and Fermat's last theorem (Boston, MA, 1995),
  Springer, New York, 1997, pp.~243--311.

\bibitem[Miy89]{Miyake89}
T.~Miyake, \emph{Modular forms}, Springer-Verlag, Berlin, 1989, Translated from
  the Japanese by Yoshitaka Maeda.

\bibitem[MT90]{MazurTilouine90}
B.~Mazur and J.~Tilouine, \emph{Repr\'esentations galoisiennes,
  diff\'erentielles de {K}\"ahler et ``conjectures principales''}, Inst. Hautes
  \'Etudes Sci. Publ. Math. (1990), no.~71, 65--103.

\bibitem[Rib76]{Ribet76}
K.~A. Ribet, \emph{A modular construction of unramified {$p$}-extensions of
  {$Q(\mu \sb{p})$}}, Invent. Math. \textbf{34} (1976), no.~3, 151--162.

\bibitem[Rub91]{Rubin91}
K.~Rubin, \emph{The ``main conjectures'' of {I}wasawa theory for imaginary
  quadratic fields}, Invent. Math. \textbf{103} (1991), no.~1, 25--68.

\bibitem[SW97]{SkinnerWiles97}
C.~M. Skinner and A.~J. Wiles, \emph{Ordinary representations and modular
  forms}, Proc. Nat. Acad. Sci. U.S.A. \textbf{94} (1997), no.~20,
  10520--10527.

\bibitem[SW99]{SkinnerWiles99}
\bysame, \emph{Residually reducible representations and modular forms}, Inst.
  Hautes \'Etudes Sci. Publ. Math. (1999), no.~89, 5--126 (2000).

\bibitem[SW01]{SkinnerWiles01}
\bysame, \emph{Nearly ordinary deformations of irreducible residual
  representations}, Ann. Fac. Sci. Toulouse Math. (6) \textbf{10} (2001),
  no.~1, 185--215.

\bibitem[Tay88]{Taylorthesis}
R.~Taylor, \emph{{O}n congruences between modular forms}, Thesis, Princeton
  University, Princeton, 1988.

\bibitem[Tay94]{Taylor94}
\bysame, \emph{{$l$}-adic representations associated to modular forms over
  imaginary quadratic fields. {II}}, Invent. Math. \textbf{116} (1994),
  no.~1-3, 619--643.

\bibitem[Tay02]{Taylor02}
\bysame, \emph{Remarks on a conjecture of {F}ontaine and {M}azur}, J. Inst.
  Math. Jussieu \textbf{1} (2002), no.~1, 125--143.

\bibitem[Til89]{Tilouine89}
J.~Tilouine, \emph{Sur la conjecture principale anticyclotomique}, Duke Math.
  J. \textbf{59} (1989), no.~3, 629--673.

\bibitem[Til96]{Tilouine96}
\bysame, \emph{Deformations of {G}alois representations and {H}ecke algebras},
  Published for The Mehta Research Institute of Mathematics and Mathematical
  Physics, Allahabad, 1996.

\bibitem[TW95]{TaylorWiles95}
R.~Taylor and A.~Wiles, \emph{Ring-theoretic properties of certain {H}ecke
  algebras}, Ann. of Math. (2) \textbf{141} (1995), no.~3, 553--572.

\bibitem[Urb95]{Urban95}
E.~Urban, \emph{Formes automorphes cuspidales pour {${\rm GL}\sb 2$} sur un
  corps quadratique imaginaire. {V}aleurs sp\'eciales de fonctions {$L$} et
  congruences}, Compositio Math. \textbf{99} (1995), no.~3, 283--324.

\bibitem[Urb98]{Urban98}
\bysame, \emph{Module de congruences pour {${\rm GL}(2)$} d'un corps imaginaire
  quadratique et th\'eorie d'{I}wasawa d'un corps {CM} biquadratique}, Duke
  Math. J. \textbf{92} (1998), no.~1, 179--220.

\bibitem[Urb05]{Urban05}
\bysame, \emph{Sur les repr\'esentations {$p$}-adiques associ\'ees aux
  repr\'esentations cuspidales de {${\rm GSp}\sb {4/{\Bbb Q}}$}}, Ast\'erisque
  (2005), no.~302, 151--176, Formes automorphes. II. Le cas du groupe $\rm
  GSp(4)$.

\bibitem[Vat99]{Vatsal99}
V.~Vatsal, \emph{Canonical periods and congruence formulae}, Duke Math. J.
  \textbf{98} (1999), no.~2, 397--419.

\bibitem[Was97]{Washingtonbook}
L.~C. Washington, \emph{Introduction to cyclotomic fields}, second ed.,
  Graduate Texts in Mathematics, vol.~83, Springer-Verlag, New York, 1997.

\bibitem[Wes05]{Weston05}
T.~Weston, \emph{Iwasawa invariants of {G}alois deformations}, Manuscripta
  Math. \textbf{118} (2005), no.~2, 161--180.

\bibitem[Wil95]{Wiles95}
A.~Wiles, \emph{Modular elliptic curves and {F}ermat's last theorem}, Ann. of
  Math. (2) \textbf{141} (1995), no.~3, 443--551.

\bibitem[Yag82]{Yager}
R.~I. Yager, \emph{A {K}ummer criterion for imaginary quadratic fields},
  Compositio Math. \textbf{47} (1982), no.~1, 31--42.

\end{thebibliography}

\end{document}